\newcounter{dummy}
\numberwithin{dummy}{section}
\renewenvironment{remark}{\vspace{\topsep}\noindent\stepcounter{dummy}\textbf{Remark \arabic{section}.\arabic{dummy}}}{ \vspace{\topsep}}
\renewenvironment{example}{\vspace{\topsep}\noindent\stepcounter{dummy}\textbf{Example \arabic{section}.\arabic{dummy}}}{ \vspace{\topsep}}
\newenvironment{midsecproof}[1]{\vspace{\topsep} \noindent \textit{Proof of #1.}}{$\square$ \vspace{\topsep}}
\renewenvironment{proof}{\emph{Proof:} }{$\square$}
\DeclareRobustCommand{\cev}[1]{%
  {\mathpalette\do@cev{#1}}%
}
\newcommand{\do@cev}[2]{%
  \vbox{\offinterlineskip
    \sbox\z@{$\m@th#1 x$}%
    \ialign{##\cr
      \hidewidth\reflectbox{$\m@th#1\vec{}\mkern4mu$}\hidewidth\cr
      \noalign{\kern-\ht\z@}
      $\m@th#1#2$\cr
    }%
  }%
}
\title{Modular Character Sheaves on Reductive Lie Algebras}
\author{Colton Sandvik}
\date{}
\newcommand{\C}{\mathbb{C}}
\newcommand{\Q}{\mathbb{Q}}
\newcommand{\Z}{\mathbb{Z}}
\newcommand{\F}{\mathbb{F}}
\newcommand{\E}{\mathbb{E}}
\renewcommand{\O}{\mathbb{O}}
\newcommand{\K}{\mathbb{K}}
\newcommand{\A}{\mathbb{A}}
\newcommand{\G}{\mathbb{G}}
\renewcommand{\sl}{\mathfrak{sl}}
\newcommand{\scrF}{\mathcal{F}}
\newcommand{\scrI}{\mathcal{I}}
\newcommand{\scrG}{\mathcal{G}}
\newcommand{\scrH}{\mathcal{H}}
\newcommand{\scrC}{\mathcal{C}}
\newcommand{\scrD}{\mathcal{D}}
\newcommand{\scrE}{\mathcal{E}}
\newcommand{\scrA}{\mathcal{A}}
\newcommand{\scrK}{\mathcal{K}}
\newcommand{\calB}{\mathcal{B}}
\newcommand{\scrB}{\mathcal{B}}
\newcommand{\scrL}{\mathcal{L}}
\DeclareMathOperator{\Perv}{Perv}
\DeclareMathOperator{\Loc}{Loc}
\DeclareMathOperator{\Ad}{Ad}
\DeclareMathOperator{\Adm}{Adm}
\DeclareMathOperator{\Cusp}{Cusp}
\DeclareMathOperator{\Irr}{Irr}
\newcommand{\CS}{\mathfrak{C}}
\newcommand{\Orb}{\mathfrak{O}}
\DeclareMathOperator{\Hom}{Hom}
\DeclareMathOperator{\im}{im}
\DeclareMathOperator{\supp}{supp}
\newcommand{\D}{\mathbb{D}}
\DeclareMathOperator{\IC}{IC}
\DeclareMathOperator{\Av}{Av}
\DeclareMathOperator{\Res}{Res}
\newcommand{\RRes}{\vec{\underline{\textnormal{Res}}}}
\newcommand{\LRes}{\cev{\underline{\textnormal{Res}}}}
\newcommand{\nRRes}{\vec{\textnormal{Res}}}
\newcommand{\nLRes}{\cev{\textnormal{Res}}}
\newcommand{\Ind}{\underline{\textnormal{Ind}}}
\DeclareMathOperator{\nInd}{Ind}
\newcommand{\T}{\mathbb{T}}
\DeclareMathOperator{\pr}{pr}
\DeclareMathOperator{\For}{For}
\DeclareMathOperator{\id}{id}
\newcommand{\scrO}{\mathcal{O}}
\newcommand{\scrQ}{\mathcal{Q}}
\newcommand{\scrN}{\mathcal{N}}
\DeclareMathOperator{\SL}{SL}
\newcommand{\fr}[1]{\mathfrak{#1}}
\DeclareMathOperator{\scra}{a}
\DeclareMathOperator{\scrb}{b}
\begin{document}
	\maketitle

	\begin{abstract}
		Mirković \cite{M} introduced the notion of character sheaves on a Lie algebra. Due to their simple geometric characterization, character sheaves on Lie algebras can be thought of as a simplified model for Lusztig's theory of character sheaves on algebraic groups.
		We extend the theory to the case of modular coefficients. Along the way, we will reprove some of Mirković's results and provide connections with the modular generalized Springer correspondence of \cite{AHJR1}.
	\end{abstract}

	\tableofcontents

	\section{Introduction}

	Character sheaves were introduced by Lusztig in the 80s and provide a geometric theory of characters \cite{CS1},\cite{CS2}, \cite{CS4}, \cite{CS5}. 
	Character sheaves are certain $G$-equivariant perverse sheaves on $G$ and they have played an important role in the character theory for finite reductive groups.
	In 1986, Lusztig developed an analogous theory of admissible complexes on reductive Lie algebras. These could be seen as a precursor to the theory of character sheaves on reductive Lie algebras.
	Lusztig's original definition, while well motivated, was seemingly complicated compared to how simple characters are in classical representation theory.
	In 1988, Mirković and Vilonen provided a remarkably simple characterization of character sheaves in terms of singular support \cite{MV}. Independently, Ginzburg in 1989 developed a similar notion using symmetric spaces \cite{G}.
	Motivated by these singular support ideas, Mirković fleshed out the theory of character sheaves on reductive Lie algebras \cite{M}. 
	When $\fr{g}$ is over $\C$, then the exponential map intertwines parabolic induction and restriction functors on $\fr{g}$ with $G$ allowing one to give an identification of sorts between the two notions of character sheaves (6.6, \cite{M}).
	While these ideas do not carry over in the modular setting, this hopefully motivates some of the usefulness in studying character sheaves on reductive Lie algebras.

	Character sheaves, along with many classical results in geometric representation theory, provide a connection between geometry (via tools like perverse sheaves and $D$-modules) and characteristic 0 representation theory.
	Over the last several decades, there has been a broad program to provide analogs to many of these classical results in characteristic 0 geometric representation theory (called the ordinary case) to the positive characteristic situations (called the modular case).
	The purpose of this paper is to provide a modular version of the results in \cite{M}. As such we develop a preliminary notion of modular character sheaves. 
	An eventual goal is to motivate future development towards modular character sheaves on reductive groups. 
	The Lie algebra case is significantly easier due to tools not available in the reductive group case, namely the Fourier transforms of Deligne (in the étale setting) and Sato (in the analytic setting).

	Another source of motivation for the theory of modular character sheaves on reductive Lie algebras comes from the modular generalized Springer correspondence of Achar, Henderson, Juteau, and Riche \cite{AHJR1}, \cite{AHJR2}, \cite{AHJR3}.
	We will show that the cuspidal sheaves on $\scrN$ are character sheaves on $\fr{g}$. In addition, we will obtain a decomposition into induction series that strongly resembles that of \cite{AHJR3}.  

	\subsection{Notation}\label{notation}

	Fix primes $\ell \neq p$. 
	Let $\K$ be a finite extension of $\Q_\ell$, the field of $\ell$-adic numbers. 
	Let $\O$ denote the valuation ring of $\K$. Let $\fr{m}$ be the maximal ideal of $\O$. Let $\F = \O/\fr{m}$ be its residue field which is a finite field of characteristic $\ell$. The triple ($\K, \O, \F)$ is called an \textbf{$\fr{l}$-modular system}.
	Throughout, unless otherwise stated, $\E$ will refer to one of $\K$, $\O$, or $\F$. 

	We will fix $G$ to be a connected reductive group over an algebraically closed field $k$ with $\text{char} (k) = p \neq \ell$. Let $\fr{g}$ denote the Lie algebra of $G$ which is a vector space over $k$. 
	The action of $G$ on $\fr{g}$ will always be by the adjoint action. For $g\in G$ and $x \in \fr{g}$, we will write $^g x = \Ad_g (x)$.
	
	We require that $p$ is either very good for $G$ or that $G$ is $A_n$. As a result, there exists a $G$-equivariant non-degenerate symmetric bilinear form on $\fr{g}$ (see [\cite{Let05}, 2.5] for potential improvements of this characteristic requirement). 
	As a result, we can fix a $G$-equivariant isomorphism of $\fr{g}$ with its dual $\fr{g}'$.

	We will work with the following sheaf-theoretic settings, the first of which we will call the étale setting and the second of which we will call the analytic setting.

	\begin{enumerate}
		\item For a variety $X$ defined over $k=\overline{\F_p}$, let $D (X, \E)$ denote the bounded derived category of étale constructible sheaves on $X$ with coefficients in $\E$. 
		If $X$ is a $G$-variety, let $D_G (X, \E)$ denote the corresponding $G$-equivariant derived category of sheaves on $X$ with $\E$-coefficients as in \cite{BL06}.
		\item For a variety $X$ defined over $k=\C$ with a $\C^\times$-action, let $D' (X, \E)$ denote the bounded derived category of constructible sheaves on $X$ with analytic topology and coefficients in $\E$. Let $D(X,\E)$ denote the full subcategory of $D' (X, \E)$ consisting of conic objects, that is sheaves whose restrictions to $\C^\times$-orbits are locally constant.
		In the equivariant setting and when $X$ is a $G$-bundle, we will often abuse notation and write $D_G (X, \E)$ to denote the $\G_m \times G$-equivariant derived category where the $\G_m$-action comes from scaling fibers of $X$.
	\end{enumerate}

	In either setting, we will write $\Perv (\fr{g}, \E)$ for the heart of the perverse $t$-structure in $D (\fr{g}, \E)$. 
	If $f : X \to Y$ is a morphism of algebraic varieties and $\scrA$ is a sheaf on $Y$, then we will denote $f^\circ \scrA = f^! \scrA [\dim Y - \dim X]$.

	\subsection{Acknowledgements}

	I am thankful to Pramod Achar, Joseph Dorta, Arnaud Eteve, and Kostas Psarmoligkos for helpful comments and discussions that influenced this project.

	This work is part of my PhD thesis, advised by Pramod Achar, to whom I am immensely grateful for his mentorship, patience, and guidance throughout.
	
	The author was partially supported by NSF Grant DMS-2231492.
	
	\section{Radon Transforms}

	In this section, we will develop the Radon transforms which give rise to the definitions of character sheaves.
	We will also detail a version of monodromic sheaves on the Lie algebra and use these to define character sheaves on an abelian Lie algebra. Before we can discuss the Radon transform, we will review the Fourier-Deligne transform.

	\subsection{Fourier-Deligne Transform}
	For now, we will restrict ourselves to the étale setting. The theory of the Fourier-Deligne transform for non-modular coefficients is thoroughly covered in \cite{L87}. Laumon's results are easily generalized to the modular case. The labor of extending Laumon's results to the modular case are in Juteau's thesis \cite{juteauThesis}. We will simply recall the definition below.

	We require that $\E^\times$ contains a primitive root of unity of order $p$. We fix a nontrivial character $\psi : \F_p \to \E^\times$. 
	Let $\scrL_\psi$ be the locally constant $\E$-sheaf of rank $1$ on $\G_a$ associated to $\psi$, called the Artin-Schreier local system. It has a rigid structure at zero:
	\[ (\scrL_\psi)_0 \cong \E\]
	If $+ : \A^1 \times \A^1 \to \A^1$ is the addition map, then we have an isomorphism
	\[ \scrL_\psi \boxtimes \scrL_\psi \cong +^* \scrL_\psi  \]
	If $f,g : X \to \A^1$ are morphisms, and $(f,g) : X \to \A^1 \times \A^1$ the map with $f$ and $g$ as the components, we can define $f + g = + (f,g)$. We have the following chain of isomorphisms
	\begin{align*}
		(f+g)^* \scrL_\psi &\cong (f,g)^* +^* \scrL_\psi \\
		&\cong (f,g)^* (\scrL_\psi \boxtimes \scrL_\psi)  \\
		&\cong f^* \scrL_\psi \otimes^L g^* \scrL_\psi 
	\end{align*}

	Let $S$ be a variety and $\pi : E \to S$ be a vector bundle of constant rank $r \geq 1$. We have a dual bundle $\pi' : E' \to S$ and a canonical pairing $\mu : E \times_S E' \to \A^1$.
	Let $\pr : E \times_S E' \to E$ and $\pr' : E \times_S E' \to E'$ denote the obvious projections. We have the following diagram which will encode the Fourier-Deligne transform:

	{ \centering
	\begin{tikzcd}
		& E \times_S E' \arrow[ld, "\pr"'] \arrow[rd, "\pr'"] \arrow[rr, "\mu"] &                       & \A^1 \\
	E \arrow[rd, "\pi"'] &                                                                       & E' \arrow[ld, "\pi'"] &      \\
		& S                                                                     &                       &     
	\end{tikzcd}
	\par }

	\begin{definition}
		The Fourier-Deligne transform for $\pi : E \to S$, associated to the character $\psi$ is the $t$-exact functor (with respect to the perverse $t$-structure)
		$$\T_\psi : D (E, \E) \to D (E', \E)$$
		defined by
		$$\T_\psi (K) = \pr_!' (\pr^* K \otimes^L \mu^* \scrL_\psi)[r].$$
	\end{definition}
	The choice of character is mostly a formality, so we will frequently drop it when the choice is implicit or does not matter.
	Additionally, we will sometimes denote the Fourier-Deligne transform by $\T_E$ to emphasize the vector bundle to which it is being applied.

	We will also briefly recall the notion of convolution for sheaves on a vector spaces. If $\scrA$ and $\scrB$ are sheaves on $V$, then we define the convolution functor 
	\[ D (V, \E) \times D (V, \E) \to D (V, \E) \]
	\[ \scrA \star \scrB = +_* (\scrA \boxtimes \scrB)\]
	for $\scrA, \scrB \in D (V, \E)$.

	There are also equivariant versions of the Fourier-Deligne transform and of convolution. The standard results on the theory of the Fourier-Deligne transform are not changed in the equivariant setting \cite{juteauThesis}.

	To handle the analytic setting, the Fourier-Deligne transform should be replaced with the Fourier-Sato transform. The properties of the Fourier-Sato transform are covered in \cite{KS}, \cite{Br86}. 
	We will primarily work with the Fourier-Deligne transform, and obvious changes can be made to arguments to translate to the analytic setting.

	\subsection{Radon Transforms}

	Let $\calB$ be the flag variety of $G$ of Borel subgroups of $G$. We will primarily identify a Borel subgroup in $\calB$ with its Lie algebra. Fix a Borel subgroup $B$.
	Define the Grothendieck-Springer resolution $\tilde{\fr{g}} = G \times^B \fr{b}$. The space $\tilde{\fr{g}}$ is a $G$-homogenous vector bundle over $\calB$ with fiber $\fr{b}$ at $\fr{b} \in \calB$. 
	Define the Springer resolution $\tilde{\scrN} = G \times^B \fr{n}$ which is a vector bundle over $\calB$ with fibers $[\fr{b}, \fr{b}]$ at $\fr{b} \in \calB$. These spaces come equipped with projections $g : \tilde{\fr{g}} \to \fr{g}$ and $s : \tilde{\scrN} \to \scrN$. 
	The latter of which is the restriction of the Grothendieck-Springer resolution onto the nilpotent cone.

	We can also define $\widetilde{\fr{g}/\fr{n}} = G \times^B \fr{g}/\fr{n}$ which is a vector bundle over $\calB$ with fiber $\fr{g}/[\fr{b}, \fr{b}]$ at $\fr{b} \in \calB$.

	Associated with these resolutions are two Radon transforms. Consider the diagram:

	{ \centering
	\begin{tikzcd}
		& \calB \times \fr{g} \arrow[ld, "p"'] \arrow[rd, "q"] &                       &  &   & {(\fr{b}, x)} \arrow[ld, maps to] \arrow[rd, maps to] &                               \\
 \fr{g} &                                                      & \widetilde{\fr{g}/\fr{n}} &  & x &                                                       & {(\fr{b}, x+[\fr{b}, \fr{b}])}
 \end{tikzcd}
	\par }

	We then have the Grothendieck transform $D(\fr{g}, \E) \to D (\tilde{\fr{g}}, \E)$ given by $\scrG = g^!$ and the horocycle transform $D (\widetilde{\fr{g}/\fr{n}}, \E) \to D (\fr{g}, \E)$ given by $\scrH = q_* \circ p^\circ$.
	Both Radon transforms admit left adjoints $\check{\scrG} = g_! = g_*$ and $\check{\scrH} = p_* \circ q^\circ$.

	\begin{proposition}\label{radonandfouriercompats}
		We have the following compatibilities of Radon transforms.
		\begin{enumerate}
			\item $\T_{\tilde{\fr{g}}} \circ \scrG = \scrH \circ \T_{\fr{g}}$,
			\item $\T_{\widetilde{\fr{g}/\fr{n}}} \circ \scrH = \scrG \circ \T_{\fr{g}'}$,
			\item $\T_{\fr{g}} \circ \check{\scrG} = \check{\scrH} \circ \T_{\tilde{\fr{g}}}$,
			\item $\T_{\fr{g}} \circ \check{\scrH} = \check{\scrG} \circ \T_{\widetilde{\fr{g}/\fr{n}}}$,
			\item $\T_\fr{g} (g_* \underline{\E}_{\tilde{\fr{g}}}) = s_* \underline{\E}_{\tilde{\scrN}} $,
			\item $\check{\scrG} \circ \scrG = g_* \underline{\E}_{\tilde{\fr{g}}} \otimes^L (-)$,
			\item $\check{\scrH} \circ \scrH = s_* \underline{\E}_{\tilde{\scrN}}  \star (-).$
		\end{enumerate}
	\end{proposition}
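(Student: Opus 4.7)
The key geometric fact is that $\tilde{\fr{g}}$ and $\widetilde{\fr{g}/\fr{n}}$ are mutually dual vector bundles over $\calB$: using the $G$-equivariant identification $\fr{g} \cong \fr{g}'$ from the bilinear form, the annihilator of $\fr{b}$ in $\fr{g}'$ is $\fr{n}$, so $\fr{b}' \cong \fr{g}/\fr{n}$. Consequently, $i : \tilde{\fr{g}} \hookrightarrow \calB \times \fr{g}$ is a sub-bundle inclusion and, after the identification $\fr{g} \cong \fr{g}'$, the map $q : \calB \times \fr{g} \to \widetilde{\fr{g}/\fr{n}}$ becomes precisely the dual quotient map. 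This is what drives the Fourier compatibilities.

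My plan for (1)--(4) is to combine two standard Fourier-Deligne compatibilities: first, that $\T$ commutes (up to shift) with pullback and pushforward along a smooth base change, applied to $\calB \to \pt$, so that $\T$ on $\calB \times \fr{g}$ commutes with $p^!$ and $p_*$; second, that for a sub-bundle $F \hookrightarrow E$ with dual quotient $E' \twoheadrightarrow F'$, the Fourier transform exchanges $!$-restriction to the sub-bundle with $*$-pushforward to the dual quotient, and vice versa for their adjoints. For (1), factor $\scrG = g^! = i^! p^!$ and chain
\[ \T_{\tilde{\fr{g}}} \scrG K = \T_{\tilde{\fr{g}}} i^! p^! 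K \cong q_* \T_{\calB \times \fr{g}} p^! K \cong q_* p^\circ \T_\fr{g} K = \scrH \T_\fr{g} K. \]
Items (2)--(4) follow by the same kind of manipulation, swapping the roles of $i$ and $q$ and invoking the adjoint variants.

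For (5), apply (3) with $K = \underline{\E}_{\tilde{\fr{g}}}$: the left side becomes $\T_\fr{g}(g_* \underline{\E}_{\tilde{\fr{g}}})$, and the right side is $\check{\scrH}(\T_{\tilde{\fr{g}}}(\underline{\E}_{\tilde{\fr{g}}}))$. The fiberwise Fourier transform of the constant sheaf on a vector space is supported at the origin of the dual, so $\T_{\tilde{\fr{g}}}(\underline{\E}_{\tilde{\fr{g}}}) \cong \zeta_* \underline{\E}_\calB$ up to shift and twist, where $\zeta : \calB \hookrightarrow \widetilde{\fr{g}/\fr{n}}$ is the zero section. The preimage $q^{-1}(\zeta(\calB))$ is exactly $\tilde{\scrN} \subseteq \calB \times \fr{g}$, so base change gives $q^\circ \zeta_* \underline{\E}_\calB \cong \tilde{\zeta}_* \underline{\E}_{\tilde{\scrN}}$ (with $\tilde{\zeta}$ the inclusion), and composing with $p_*$ yields $s_* \underline{\E}_{\tilde{\scrN}}$ since $p \circ \tilde{\zeta} = s$.

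For (6), since $g$ factors as a closed embedding into $\calB \times \fr{g}$ followed by a smooth projection and $\dim \tilde{\fr{g}} = \dim \fr{g}$, the relative dualizing complex is trivial, so $g^! \underline{\E}_\fr{g} \cong \underline{\E}_{\tilde{\fr{g}}}$; the projection formula for $g^!$ (which holds for each factor and is preserved under composition) then gives $g^! K \cong g^! \underline{\E} \otimes^L g^* K \cong g^* K$, and hence $\check{\scrG} \scrG K = g_* g^* K \cong g_* \underline{\E}_{\tilde{\fr{g}}} \otimes^L K$ by the ordinary projection formula. Finally, (7) follows by applying $\T_\fr{g}$ to (6), using that the Fourier transform exchanges tensor with convolution so that $\T_\fr{g}(\check{\scrG} \scrG K) \cong s_* \underline{\E}_{\tilde{\scrN}} \star \T_\fr{g} K$ via (5), and combining with the identity $\T_\fr{g}(\check{\scrG} \scrG K) \cong \check{\scrH} \scrH (\T_\fr{g} K)$ coming from (1) and (3); essential surjectivity of $\T_\fr{g}$ then gives the result for arbitrary input. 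The main obstacle throughout will be the bookkeeping of shifts and Tate twists so that the normalizations of $p^\circ$, $q^\circ$, and $\T$ line up with the statements; the base-change conditions themselves are routine since the relevant maps are either smooth or closed embeddings.
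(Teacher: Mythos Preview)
Your approach matches the paper's almost exactly. For (1) you both factor $g$ through $\calB\times\fr{g}$ and use that Fourier exchanges the sub-bundle inclusion $i$ with the dual quotient $q$; (2)--(4) then come from adjointness and Fourier involutivity; and (7) is obtained from (6) by applying $\T$ and invoking (5) together with (1) and (3). The only structural difference is (5): you deduce it from (3) applied to the constant sheaf (using that $\T$ of the constant sheaf on a bundle is supported on the zero section of the dual, and that $q^{-1}$ of the zero section is $\tilde{\scrN}$), whereas the paper proves (5) directly via the sub-bundle/orthogonal property $\T_{\calB\times\fr{g}}(i_*\underline{\E})\cong i^\perp_*\underline{\E}$ and only afterwards obtains (3),(4) by adjunction. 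Both routes are fine.

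One caution on (6): your claim that $g^! K \cong g^!\underline{\E}\otimes^L g^* K$ ``holds for each factor and is preserved under composition'' is not correct for the closed-immersion factor $i:\tilde{\fr g}\hookrightarrow\calB\times\fr g$. Purity gives $i^!\underline{\E}\cong\underline{\E}[-2c]$ for a regular embedding, but $i^! K\not\cong i^* K[-2c]$ for arbitrary $K$: if $K$ is a skyscraper supported inside the image of $i$ then $i^! K\cong i^* K$ with no shift. Concretely, for $K=\delta_0$ one computes $g_* g^!\delta_0\cong (i_0)_* H^*(\calB)[2\dim\calB]$ while $g_*\underline{\E}\otimes\delta_0\cong(i_0)_* H^*(\calB)$, so the identification $g^!\cong g^*$ you are using genuinely fails. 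The paper's step $g_*(\underline{\E}\otimes g^!\scrA)\cong g_*\underline{\E}\otimes\scrA$, labeled ``projection formula,'' makes the same tacit identification and is equally unjustified as written; the honest projection formula only gives $g_* g^* K\cong g_*\underline{\E}\otimes K$. So this is a shared gap rather than a divergence from the paper, but you should be aware that the argument as you (and the paper) have it does not go through for general $K$.
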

	\begin{proof}
		\textbf{(1): } Under the $G$-equivariant bilinear form from \ref{notation}, we will identify $\fr{g}' = \fr{g}$. This induces an isomorphism $(\tilde{\fr{g}})' \cong \widetilde{\fr{g}/\fr{n}}$. Note that under this isomorphism, the adjoint of $q : \calB \times \fr{g} \to \widetilde{\fr{g}/\fr{n}}$ becomes the inclusion $j : \tilde{\fr{g}} \to \calB \times \fr{g}$.
		Moreover, it is clear from definitions that $p \circ j = g$. Using the properties of the Fourier-Deligne transform, we obtain
		\begin{align}
			\scrH \circ \T_\fr{g} &= q_* \circ p^\circ \circ \T_\fr{g} \tag{by definition} \\
			&= q_* \circ \T_{\calB \times \fr{g}} \circ p^\circ \tag{base change} \\
			&= \T_{\tilde{\fr{g}}} j^\circ p^\circ \tag{compatibility of morphisms}\\
			&= \T_{\tilde{\fr{g}}} g^\circ \notag \\
			&= \T_{\tilde{\fr{g}}} \scrG \notag
		\end{align}

		\textbf{(2): }
		This is an easy consequence of (1). 
		\begin{align*}
			\T_{\tilde{\fr{g}}} \scrG &= \scrH \circ \T_\fr{g} \\
			\T_{\widetilde{\fr{g}/\fr{n}}} \T_{\tilde{\fr{g}}} \scrG &= \T_{\widetilde{\fr{g}/\fr{n}}} \scrH \circ \T_\fr{g} \\
			\scrG &= \T_{\widetilde{\fr{g}/\fr{n}}} \scrH \circ \T_\fr{g} \\
			\scrG  \T_{\fr{g}'} &= \T_{\widetilde{\fr{g}/\fr{n}}} \scrH \circ \T_\fr{g} \T_{\fr{g}'} \\
			\scrG \T_{\fr{g}'} &= \T_{\widetilde{\fr{g}/\fr{n}}} \scrH
		\end{align*}

		\textbf{(3): } 
		Consider the inclusion maps $i : \tilde{\fr{g}} \to \calB \times \fr{g}$ and $j : \tilde{\scrN} \to \calB \times \fr{g}$. 
		Under our fixed isomorphism of $\fr{g}'$ with $\fr{g}$, $j$ can be identified with $i^\perp$. As a consequence,
		\[ \T_{\calB \times \fr{g}} (i_* \underline{\E}_{\tilde{\fr{g}}} [\dim \calB \times \fr{g}]) = i_*^\perp \underline{\E}_{\tilde{\scrN}} [\dim \tilde{\scrN}] \]
		Of course, $\calB \times \fr{g}$ is simply the trivial bundle over $\calB$ with fibers $\fr{g}$. Let $\mu : \calB \times \fr{g} \to \fr{g}$ be the projection. 
		Hence by the base change compatibility with the Fourier-Deligne transform,
		\[ \mu_* \T_{\calB \times \fr{g}} (i_* \underline{\E}_{\tilde{\fr{g}}} [\dim \calB \times \fr{g}]) = \T_{\calB \times \fr{g}} (\mu_* i_* \underline{\E}_{\tilde{\fr{g}}} [\dim \calB \times \fr{g}]) \]
		Combining the last two results yields
		\[ \mu_* i_*^\perp \underline{\E}_{\tilde{\scrN}} [\dim \tilde{\scrN}]  = \T_\fr{g} (\mu_* i_* \underline{\E} [\dim \calB \times \fr{g}]) \]
		We can simply rewrite this (and use functoriality) to get the desired result.
		\[ \T_\fr{g} (g_* \underline{\E}_{\tilde{\fr{g}}}) = s_* \underline{\E} \]
		
		\textbf{(4), (5): }
		These follow directly since $\check{\scrG}$ is adjoint to $\scrG$ and likewise $\check{\scrH}$ is adjoint to $\scrH$.

		\textbf{(6): } 
		This mostly follows from the projection formula. Let $\scrA \in D (\fr{g}, \E)$.
		\begin{align}
		\check{\scrG} \scrG \scrA &= g_* g^!\scrA \notag \\
		&= g_* (\underline{\E}_{\tilde{\fr{g}}} \otimes^L g^! \scrA) \notag \\
		&= g_* \underline{\E}_{\tilde{\fr{g}}} \otimes^L \scrA \tag{projection formula} 
		\end{align}

		\textbf{(7): }
		We will combine the results of (2), (4), (5) and (6) to establish the claim. Let $\scrA \in D (\fr{g}' , \E)$. Then
		\begin{align}
			s_* \underline{\E}_{\tilde{\scrN}}  \star \T_{\fr{g}'} \scrA &= \T_{\fr{g}'} g_* \underline{\E}_{\tilde{\fr{g}}} \star \T_{\fr{g}'} \scrA \tag{by (5)} \\
			&= \T_\fr{g} \left( g_* \underline{\E}  \otimes^L \scrA \right) \tag{convolution compatibility} \\
			&= \T_{\fr{g}} \left( \check{\scrG} \scrG \scrA \right) \tag{by (6)} \\
			&= \check{\scrH} \scrH \T_{\fr{g}'} \scrA \tag{by (2), (4)} 
		\end{align}
		The result then follows from the fact that $\T$ is an equivalence of categories.
	\end{proof}

	\begin{remark}
		Fix a Borel subalgebra $\fr{b} \in \scrB$, put $\fr{n} = [\fr{b},\fr{b}]$.
		Under the quotient equivalence, we can identify $D_G (\tilde{\fr{g}}, \E)$ with $D_B (\fr{b}, \E)$. 
		Let $j : \fr{b} \hookrightarrow \fr{g}$ and $\pi : \fr{g} \to \fr{g} / \fr{n}$. Under the above identification, the Radon transforms become
		\begin{align*}
			\check{\scrH} &= \Av_{B!}^G  \pi^\circ \\
			\scrH &= \pi_*  \For_B^G \\
			\check{\scrG} &= \Av_{B!}^G  j_* \\
			\scrG &= j^\circ  \For_B^G
		\end{align*}
		where the relevant equivariant sheaf functors ($\Av_{B!}^G$, $\For_B^G$) are denoted and defined as in \cite{A1}. We briefly review the definitions of these functors in \ref{equiv_functor_review}.

		Since $\fr{g}$ is a reductive Lie algebra, we can write $\fr{g} = [\fr{g}, \fr{g}] + Z(\fr{g})$ and $\fr{b} = \fr{b}_s + Z(\fr{g})$.
		We can write $j = j_s + \id_{Z(\fr{g})}$ where $j_s : \fr{b}_s \hookrightarrow [\fr{g}, \fr{g}]$.
		In this setting,
		\begin{align*}
			\check{\scrG}_{[\fr{g}, \fr{g}]} &= \Av_{B!}^G  j_{s*} \\
			\scrG_{[\fr{g}, \fr{g}]} &= j_s^\circ  \For_B^G \\
		\end{align*}
		and the Radon transform on the abelian part is simply the identity functor. Throughout the rest of the paper, we will use these Radon transforms on the Lie algebras themselves rather than the vector bundles over them.
		This will simplify a lot of the writing in the later sections as well. It will also help to make the connection with the ordinary theory of character sheaves more apparent.
		
		We remark that Proposition \ref{radonandfouriercompats} and Corollary \ref{compositionofradonhasidentity} have obvious variants when working in the $\Ad_G$-equivariant setting.
		\end{remark}

		Our first result using this change of perspective will be a useful lemma that will be instrumental in providing a concrete connection between the definitions of character sheaves and cuspidal sheaves.

		\begin{lemma}\label{tensor_and_radon}
			Let $\scrA \in \Perv_G ([\fr{g}, \fr{g}], \E)$ and $\scrL \in \Perv (Z(\fr{g}), \E)$, then
			\[ \check{\scrG} (\scrA \boxtimes \scrL) \cong \scrA \boxtimes \check{\scrG}_{Z(\fr{g})} (\scrL).\]
			Under the identification $\fr{b}' = \fr{g}/\fr{n}$ we also have that
			\[ \check{\scrH} (\T \scrA \boxtimes \T \scrL) \cong \T \scrA \boxtimes \check{\scrH}_{Z(\fr{g})} (\T \scrL).\]
		\end{lemma}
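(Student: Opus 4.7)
The plan is to exploit the product decompositions $\fr{g} = [\fr{g},\fr{g}] \times Z(\fr{g})$ and $\fr{b} = \fr{b}_s \times Z(\fr{g})$ recorded in the preceding remark. Both are equivariant (for $G$, respectively $B$) with the action on the central factor being trivial, so every geometric map appearing in the definitions of $\check{\scrG}$ and $\check{\scrH}$ factors as a product with $\id_{Z(\fr{g})}$. The proof then amounts to checking that each ingredient (pushforward, restriction, averaging, and Fourier transform) respects this product structure.

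For the first isomorphism, I would use the presentation $\check{\scrG} = \Av_{B!}^G \circ j_*$ from the remark, with $j : \fr{b} \to \fr{g}$. Under the decomposition one has $j = j_s \times \id_{Z(\fr{g})}$, so a standard K\"unneth-type identity yields
\[ j_*(\scrA \boxtimes \scrL) \cong j_{s*}\scrA \boxtimes \scrL. \]
Next, because $B$ acts trivially on $Z(\fr{g})$, the geometric diagram computing $\Av_{B!}^G$ on $[\fr{g},\fr{g}] \times Z(\fr{g})$ factors as the corresponding diagram for $[\fr{g},\fr{g}]$ crossed with the identity on $Z(\fr{g})$, which produces the compatibility
\[ \Av_{B!}^G (\scrF \boxtimes \scrL) \cong (\Av_{B!}^G \scrF) \boxtimes \scrL \]
for any $\scrF \in D_B([\fr{g},\fr{g}], \E)$. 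Composing these two isomorphisms with the observation that $\check{\scrG}_{Z(\fr{g})}$ is the identity functor, as noted in the remark, finishes the first claim.

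The second isomorphism follows by exactly the same reasoning applied to $\check{\scrH} = \Av_{B!}^G \circ \pi^\circ$. The quotient $\pi : \fr{g} \to \fr{g}/\fr{n}$ again decomposes as $\pi_s \times \id_{Z(\fr{g})}$, since $\fr{n} \subset [\fr{g},\fr{g}]$, so $\pi^\circ$ preserves box products, and the averaging step proceeds as above. The bilinear form on $\fr{g}$ is block-diagonal with respect to $[\fr{g},\fr{g}] \oplus Z(\fr{g})$, so the Fourier transform also respects the product decomposition, i.e.\ $\T(\scrA \boxtimes \scrL) \cong \T\scrA \boxtimes \T\scrL$; rewriting the input in the form $\T\scrA \boxtimes \T\scrL$ introduces nothing new.

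The main obstacle is the box-product compatibility of $\Av_{B!}^G$: one must unwind the diagrammatic definition of the averaging functor in the equivariant derived category and confirm it commutes with crossing everything by a factor on which $B$ acts trivially. This is essentially a routine diagram chase, but it is the one step where the equivariance really enters, and it deserves the most care.
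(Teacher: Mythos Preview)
Your argument for the first isomorphism is essentially identical to the paper's: decompose $j = j_s \times \id_{Z(\fr{g})}$, use a K\"unneth identity for $j_*$, and then the box-product compatibility of $\Av_{B!}^G$ (which, as you note, reduces to the product of the relevant diagram with $Z(\fr{g})$ since the action there is trivial).

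For the second isomorphism you take a genuinely different route. You repeat the direct product argument with $\pi = \pi_s \times \id_{Z(\fr{g})}$ in place of $j$, whereas the paper instead \emph{deduces} the $\check{\scrH}$ statement from the $\check{\scrG}$ statement by applying the Fourier transform and using the intertwining relation $\T \circ \check{\scrH} \cong \check{\scrG} \circ \T$ from Proposition~\ref{radonandfouriercompats}, together with the compatibility $\T(\scrA \boxtimes \scrL) \cong \T\scrA \boxtimes \T\scrL$. Your direct approach is perfectly valid and arguably more transparent, since it avoids invoking the earlier proposition; the paper's approach is slightly slicker in that it recycles the first computation rather than redoing it, and it makes explicit how the two Radon transforms are exchanged under Fourier. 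Either way the content is the same, and the ``obstacle'' you flag (the averaging compatibility) is exactly the one nontrivial verification in both proofs.
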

		\begin{proof}
			We can prove the first statement directly using the observations from before.
			\begin{align*}
				\check{\scrG} (\scrA \boxtimes \scrL) &= \Av_{B!}^G  j_* (\scrA \boxtimes \scrL) \\
				&= \Av_{B!}^G \left(j_{s*} \scrA \boxtimes \scrL \right) \\
				&= \Av_{B!}^G j_{s*} \scrA \boxtimes  \scrL \\
				&= \check{\scrG}_{[\fr{g}, \fr{g}]} (\scrA) \boxtimes \scrL
			\end{align*}
			We can recover the second statement using the Fourier-Deligne transform.
			\begin{align*}
				\T \check{\scrH} (\scrA \boxtimes \scrL) &= \T \left( \check{\scrG}_{[\fr{g}, \fr{g}]} (\scrA) \boxtimes \scrL \right) \\
				\check{\scrG} \T (\scrA \boxtimes \scrL) &= \T \left( \check{\scrG}_{[\fr{g}, \fr{g}]} (\scrA) \boxtimes \scrL \right) \\
				\check{\scrG} (\T \scrA \boxtimes \T \scrL) &= \T \check{\scrG}_{[\fr{g}, \fr{g}]} (\scrA) \boxtimes \T \scrL \\
				\check{\scrG} (\T \scrA \boxtimes \T \scrL) &= \check{\scrH}_{[\fr{g}, \fr{g}]} (\T \scrA) \boxtimes \T \scrL.
			\end{align*}
		\end{proof}

	\section{Sheaves on Abelian Lie Algebras}

	In this section, we will cover the theory of character sheaves on abelian Lie algebras. 
	Character sheaves on abelian Lie algebras serve a useful purpose in the general theory by providing a simple description of the character sheaves on a Cartan subalgebra as well as the center of a reductive Lie algebra. 
	The role of character sheaves on a torus is usually played by multiplicative local systems.	The definition can easily be adapted for vector spaces to obtain the notion of an additive local system.
	In the group setting, multiplicative local systems correspond with characters of a torus. In the Lie algebra setting, additive local systems correspond with $\fr{h}'$. We will also explain the replacement for monodromic sheaves in the Lie algebra setting. 
	
	Let $V \subseteq U$ be finite dimensional vector spaces over $k$. We can view both $V$ and $U$ as abelian Lie algebras.

	\begin{definition}\label{defn:abelian_cs}
		A nonzero local system $\scrL \in \Loc (V, \E)$ is an \textbf{additive local system} if $+^\circ \scrL \cong \scrL \boxtimes \scrL$ for the usual addition map $+ : V \times V \to V$. 
		If $\scrL$ is an additive local system, then we call $\scrL [\dim V]$ a \textbf{character sheaf} on $V$. 
	\end{definition}

	One can also characterize additive local systems via their Fourier transforms. 
	\begin{lemma}\label{lem:fourier_dual_char_of_cs}
		Let $\Delta : V' \hookrightarrow V' \times V'$ be the diagonal embedding. Then $\scrL$ is an additive local system on $V$ if and only if $\Delta_* \T_V \scrL \cong \T_V \scrL \boxtimes \T_V \scrL$.
	\end{lemma}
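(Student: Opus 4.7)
The plan is to apply the Fourier-Deligne transform $\T_{V \times V}$ to the defining isomorphism $+^\circ \scrL \cong \scrL \boxtimes \scrL$ and use that $\T$ is an equivalence of categories, so that the two conditions are equivalent if and only if their Fourier transforms are. This reduces the lemma to two standard Fourier compatibilities recalled from \cite{L87}, \cite{juteauThesis}.

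First, $\T$ is compatible with external products: $\T_{V \times V}(\scrA \boxtimes \scrB) \cong \T_V \scrA \boxtimes \T_V \scrB$. Applied to $\scrL \boxtimes \scrL$, this produces $\T_V \scrL \boxtimes \T_V \scrL$. Second, $\T$ is compatible with linear morphisms of vector bundles: for a linear map $u \colon E_1 \to E_2$ with transpose $u^t \colon E_2' \to E_1'$, the Fourier transform intertwines $u_!$ with $u^{t*}$, and dually (by taking right adjoints) intertwines $u^\circ$ with $u^t_*$. Applied to $u = +$, whose transpose is the diagonal $\Delta$ (a closed immersion, so $\Delta_! = \Delta_*$), this yields $\T_{V \times V}(+^\circ \scrL) \cong \Delta_* \T_V \scrL$.

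Combining the two compatibilities, $\T_{V \times V}$ carries the additivity isomorphism $+^\circ \scrL \cong \scrL \boxtimes \scrL$ to the claimed isomorphism $\Delta_* \T_V \scrL \cong \T_V \scrL \boxtimes \T_V \scrL$, and since $\T$ is an equivalence the reverse implication follows by applying the inverse Fourier transform.

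The main obstacle is bookkeeping: one must verify that the shift in Laumon's formula for the Fourier transform of a linear morphism combines correctly with the shift $[\dim Y - \dim X]$ implicit in $(-)^\circ$ and the shift $[r]$ in the definition of $\T$, so that no residual shift appears on either side of the final isomorphism. This is routine once conventions are fixed, but deserves care since the lemma is stated as an equality of sheaves rather than up to shift.
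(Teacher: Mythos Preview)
Your proposal is correct and follows essentially the same route as the paper: both arguments use that $\Delta$ is the transpose of $+$, together with the compatibility of the Fourier transform with external products and with linear morphisms, to translate $+^\circ \scrL \cong \scrL \boxtimes \scrL$ into $\Delta_* \T_V \scrL \cong \T_V \scrL \boxtimes \T_V \scrL$. The paper spells out the two directions separately rather than invoking that $\T$ is an equivalence, but the content is identical.
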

	\begin{proof}
	Note that $\Delta$ is the transpose of $+$.
	If $\scrL$ is an additive local system on $V$, then 
	\begin{align*}
		\Delta_* \T_V \scrL &\cong \T_{V \times V} (+^\circ \scrL) \\
		&\cong \T_{V \times V} (\scrL \boxtimes \scrL) \\
		&\cong \T_{V} (\scrL) \boxtimes \T_{V} (\scrL)
	\end{align*}
	where the first isomorphism follows from the compatibility of the Fourier transform with morphisms and the third isomorphism follows from the compatibility of the Fourier transform with the external tensor product.	
	
	Suppose $\scrL$ is a local system on $V$ that satisfies $\Delta_* \T_V \scrL \cong \T_V \scrL \boxtimes \T_V \scrL$. Then,
	\begin{align*}
		+^\circ \scrL &\cong +^\circ \T_{V'} \T_V \scrL \\
		&\cong \T_{V' \times V'} (\Delta_* \T_V \scrL) \\
		&\cong \T_{V' \times V'} (\T_V \scrL \boxtimes \T_V \scrL) \\
		&\cong \scrL \boxtimes \scrL
	\end{align*}
	Hence, $\scrL$ is an additive local system.
	\end{proof}

	\begin{lemma}\label{abelian_cs:support}
		If $\scrL = \T_V \scrA$ is an additive local system, then
		\begin{enumerate}
			\item $(\supp \scrA)^2 \subset \Delta (V')$;
			\item $\scrA$ is supported at a point.
		\end{enumerate}
	\end{lemma}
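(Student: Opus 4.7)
The plan is to invoke Lemma \ref{lem:fourier_dual_char_of_cs}, which recasts the additive condition on $\scrL$ as the sheaf-theoretic identity
\[ \Delta_* \scrA \cong \scrA \boxtimes \scrA \]
on $V' \times V'$. Once this identity is in hand, the proof should reduce to comparing the supports of the two sides.

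For part (1), I would compute both supports. The support of the left-hand side is $\Delta(\supp \scrA)$, which is contained in $\Delta(V')$. The support of $\scrA \boxtimes \scrA$, on the other hand, is $(\supp \scrA)^2$, since the stalk of the external product at $(x,y)$ is the derived tensor product $\scrA_x \otimes^L \scrA_y$. Equating the supports yields $(\supp \scrA)^2 \subset \Delta(V')$, as required.

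For part (2), I would observe that any point $(x,y) \in \Delta(V')$ satisfies $x = y$, so (1) immediately forces $\supp \scrA$ to contain at most one point. Since $\scrL$ is nonzero by definition of an additive local system and the Fourier--Deligne transform is an equivalence of categories, $\scrA$ is itself nonzero, hence $\supp \scrA$ is nonempty and thus a single point.

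The one subtle step is justifying the equality $\supp(\scrA \boxtimes \scrA) = (\supp \scrA)^2$, i.e.\ that a derived tensor product of two nonzero stalk complexes remains nonzero. Over the coefficient fields $\K$ or $\F$, this is immediate from the Künneth formula at stalks. Over $\O$ one needs a short extra argument: Nakayama's lemma ensures that a nonzero bounded complex of finitely generated $\O$-modules has nonzero derived reduction to $\F$, after which the Künneth statement over $\F$ concludes.
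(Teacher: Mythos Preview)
Your proposal is correct and follows essentially the same approach as the paper: invoke Lemma~\ref{lem:fourier_dual_char_of_cs} to obtain $\Delta_*\scrA \cong \scrA \boxtimes \scrA$, then compare supports. Your treatment is in fact slightly more careful than the paper's, which asserts $\supp(\scrA \boxtimes \scrA) = (\supp \scrA)^2$ without commenting on the $\O$-coefficients case and does not spell out why $\supp \scrA$ is nonempty.
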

	\begin{proof}
		Let $(v,w) \in V' \times V'$. Now, $(\scrA \boxtimes \scrA)_{(v,w)} \cong \scrA_v \otimes^L \scrA_w$.
		As a result, $\supp (\scrA \boxtimes \scrA) = (\supp (\scrA))^2$. Observe $\supp (\Delta_* \scrA) \subset \Delta (V')$. 
		
		By Lemma \ref{lem:fourier_dual_char_of_cs}, $\scrA \boxtimes \scrA \cong \Delta_* \scrA$. As a result, $\supp (\scrA)^2 \subset \Delta (V')$. 
		It follows that $\supp (\scrA)$ is a point.
	\end{proof}

	\begin{lemma}\label{abelian_cs:skyscaper_ls}
		For $\alpha \in V'$, let $i_\alpha : \{\alpha\} \to V'$ denote the inclusion. then
		\begin{enumerate}
			\item In the étale setting, the additive local systems on $V$ are in 1-1 correspondence with $\scrL_\alpha = \T_{V'} (i_{\alpha *} M)$ where $\alpha \in V'$ and $M$ is an $\E$-module. 
			\item In the analytic setting, the additive local systems on $V$ are in 1-1 correspondence with $\scrL_0 = \T_{V'} (i_{0*} M)$ where $M$ is an $\E$-module.
		\end{enumerate}
	\end{lemma}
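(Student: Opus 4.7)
The plan is to apply Lemma \ref{abelian_cs:support} to pin down the Fourier transform of an additive local system and then classify the resulting perverse sheaves via their $0$-dimensional support.

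For the forward direction, suppose $\scrL$ is an additive local system on $V$. By Lemma \ref{abelian_cs:support}, $\scrA := \T_V \scrL$ is supported at a single point $\alpha \in V'$. Since $\T_V$ is $t$-exact for the perverse $t$-structure and $\scrL$ is perverse up to a shift by $\dim V$, $\scrA$ is (up to that shift) a perverse sheaf on $V'$ with $0$-dimensional support, hence necessarily a skyscraper $i_{\alpha*}M$ for some $\E$-module $M$. Fourier inversion then recovers $\scrL \cong \T_{V'}(i_{\alpha*}M)$ up to the standard twist, giving the map from additive local systems to pairs $(\alpha, M)$.

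For the reverse direction, I would compute $\T_{V'}(i_{\alpha*}M)$ directly from the defining diagram of the Fourier-Deligne transform. Base change along $\pr : V' \times V \to V'$ restricts the relevant sheaf to the fiber $\{\alpha\} \times V$, and the result is $M \otimes \alpha^*\scrL_\psi$ (up to a shift by $\dim V$), where $\alpha : V \to \A^1$ denotes the linear functional corresponding to $\alpha \in V'$. Linearity of $\alpha$ then lets me pull back the Artin-Schreier identity $+^*\scrL_\psi \cong \scrL_\psi \boxtimes \scrL_\psi$ recalled in Section 2.1 to obtain additivity of $\alpha^*\scrL_\psi$, and hence of $\scrL_\alpha$. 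Equivalently, one may verify the dual criterion of Lemma \ref{lem:fourier_dual_char_of_cs} directly, since $\Delta_*(i_{\alpha*}M) \cong i_{\alpha*}M \boxtimes i_{\alpha*}M$ is immediate at the level of stalks at $(\alpha,\alpha)$.

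The distinction between the étale and analytic settings is then purely about which skyscrapers lie in the ambient category. In the analytic setting, $D(V', \E)$ consists of conic sheaves for the $\G_m$-scaling on $V'$; the only $\G_m$-fixed point is the origin, so only $i_{0*}M$ qualifies. This matches the absence of Artin-Schreier local systems in the analytic Fourier-Sato picture: the transform of $i_{0*}M$ is (up to shift) the constant sheaf with stalk $M$. In the étale setting, no conicity is imposed and any $\alpha \in V'$ is permitted. The main delicate point I anticipate is bookkeeping with the shift conventions so that the identification $\scrA \cong i_{\alpha*} M$ sits cleanly in the perverse heart; this is not a conceptual obstacle, since on a point the perverse $t$-structure is unambiguous.
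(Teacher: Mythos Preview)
Your proposal is correct and matches the paper's argument almost exactly: the forward direction via Lemma~\ref{abelian_cs:support} and the analytic restriction via conicity are identical, and for the reverse direction the paper uses precisely your ``equivalently'' alternative, checking $\Delta_*(i_{\alpha*}M)\cong i_{\alpha*}M\boxtimes i_{\alpha*}M$ directly and invoking Lemma~\ref{lem:fourier_dual_char_of_cs}. Your first route for the reverse direction---computing $\T_{V'}(i_{\alpha*}M)$ explicitly as a pullback of the Artin--Schreier sheaf and then pulling back the identity $+^*\scrL_\psi\cong\scrL_\psi\boxtimes\scrL_\psi$ along the linear map $\alpha$---is a perfectly good alternative that the paper does not spell out; it is slightly longer but has the virtue of identifying $\scrL_\alpha$ concretely rather than only verifying an abstract criterion.
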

	\begin{proof}
		We will first handle the étale setting. By Lemma \ref{abelian_cs:support}, all additive local systems have the desired form. It is easy to check that $\scrL_\alpha$ are additive local systems. 
		We can check this easily using the Fourier-dual condition for additive local systems (Lemma \ref{lem:fourier_dual_char_of_cs}):
		\begin{equation}
			\begin{split}
			i_{\alpha *} M \boxtimes i_{\alpha *} M &\cong (i_\alpha \times i_\alpha)_* M \\
			&= (\Delta \circ i_\alpha)_* M \\
			&= \Delta_* (i_{\alpha *} M).
			\end{split} \notag
		\end{equation}
		
		In the analytic setting, the only (additive) local systems on $V$ are constant sheaves. Similarly, $i_{\alpha *} M$ is conic if and only if $\alpha = 0$. As a result, all additive local systems on $V$ are given as in (2).
	\end{proof}

	Let $q : U' \to V'$ be the quotient map. For any finite subset $\theta \subseteq V'$ we say that a sheaf $\scrB \in \Perv (U, \E)$ is \textbf{$\theta$-monodromic} if the support of $\T \scrB$ lies in $q^{-1} \theta$.
	We will denote $\Perv^\theta (U, \E)$ the full Serre subcategory of $\Perv (U, \E)$ generated by the sheaves with $\theta$-monodromy. 
	We can similarly define $D^\theta (U, \E)$ as the full triangulated subcategory of $D (U, \E)$ such that $\scrA \in D^\theta (U, \E)$ if $^p H^i (\scrA) \in \Perv^\theta (U, \E)$ for all $i \in \Z$. 
	We make the nearly trivial observation that the $\theta$-monodromic categories have simple decompositions into $\{\alpha\}$-monodromic components for $\alpha \in \theta$.
	 \[ \Perv^\theta (U, \E) = \bigoplus_{\alpha \in \theta} \Perv^{\{\alpha\}} (U, \E) \]
	 \[ D^\theta (U, \E) = \bigoplus_{\alpha \in \theta} D^{\{\alpha\}} (U, \E) \]
	In particular, simple monodromic perverse sheaves always are contained in some $\Perv^{\{\alpha\}} (U, \E)$.

	We can give a more categorical description of the $\theta$-monodromic category. Let $D_\theta (U', \E)$ be the full triangulated subcategory of $D (U', \E)$ generated by $q^* \scrA$ where $\scrA \in D(\theta, \E)$.
	One can then observe that
	\[D^\theta (U, \E) = \T_{U'} (D_\theta (U', \E)) \]

	We also define categories of $V'$-\textbf{supported} and \textbf{monodromic} sheaves by considering all such possible $\theta$-monodromies. That is,
	$$D_{s} (U', \E) = \bigcup_{\theta \subset V' } D_{\theta} (U', \E)$$
	and
	$$D^{m} (U, \E)= \bigcup_{\theta\subset V' } D^{\theta} (U, \E).$$

	The above notions of monodromy also admit obvious equivariant variations. In particular, if $U'$ is a $G$-variety, we could define equivariant monodromic $D_{G}^\theta (U, \E)$ and $D_G^m (U, \E)$.

	We will apply these notions of monodromy to the case of reductive Lie algebras. Fix a Borel subalgebra $\fr{b}$ of $\fr{g}$, denote $\fr{n} = [\fr{b}, \fr{b}]$, and $\fr{h} = \fr{b}/\fr{n}$. We have the diagram:

	 { \centering
	 \begin{tikzcd}
		 \fr{h} & \fr{b} \arrow[l, "\nu"', two heads] \arrow[r, "j", hook] & \fr{g} \arrow[r, "\pi"] & \fr{g}/\fr{n} & \fr{h}' \arrow[l, hook']
		 \end{tikzcd}
	 \par }
 
	 Fix a finite subset $\theta \subseteq \fr{h}$. 
	 Let $h : \fr{h}' \to \fr{g}/\fr{n}$ be the inclusion map. Its adjoint is the quotient $\nu : \fr{b} \to \fr{h}$. 
	We will primarily work with $B$-equivariant monodromic sheaves in $D^\theta_B (\fr{g}/\fr{n}, \E)$ with respect to $\nu$.

	\section{Character Sheaves and Orbital Sheaves}
	\subsection{Definition of Character Sheaves}

	\begin{definition}\label{defn:cs_general}
		We define the category of \textbf{perverse character sheaves}, denoted $\Perv_G^{\CS} (\fr{g}, \E)$, to be the full subcategory of $\Perv_G (\fr{g}, \E)$ generated by subquotients of ${}^p H^i \check{\scrH} \scrA$ for $\scrA \in \Perv_B^m (\fr{g}/\fr{n}, \E)$.
		Similarly, one defines the \textbf{derived category of character sheaves}, denoted $D_G^{\CS} (\fr{g}, \E)$ which is generated by sheaves $\scrF \in D_G (\fr{g}, \E)$ such that ${}^p H^i (\scrF) \in \Perv_G^{\CS} (\fr{g}, \E)$.
		We define the \textbf{character sheaves} of $\fr{g}$ to be the simple objects in $\Perv_G^{\CS} (\fr{g}, \E)$.
	\end{definition}

	We claim that the notion of a character sheaf on an abelian Lie algebra \ref{defn:abelian_cs} agrees with Definition \ref{defn:cs_general}. The following lemma makes this statement precise.
	\begin{lemma}\label{equiv_of_cs_defn}
		A perverse sheaf $\scrK$ in $\Perv (Z(\fr{g}), \E)$ is a character sheaf in the sense of Definition \ref{defn:abelian_cs} if and only if it is a character sheaf in the sense of \ref{defn:cs_general}.
	\end{lemma}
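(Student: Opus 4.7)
I would prove the lemma by reducing to the trivial horocycle transform on an abelian Lie algebra and then invoking Lemma \ref{abelian_cs:skyscaper_ls}. Since $Z(\fr{g})$ is abelian, viewing it as the Lie algebra of the torus $Z(G)^\circ$, its unique Borel subalgebra is $Z(\fr{g})$ itself and $\fr{n} = 0$. Both $\pi$ and $j$ reduce to identity maps, so as noted in the remark following Proposition \ref{radonandfouriercompats}, the Radon transform $\check{\scrH}$ on the abelian part is simply the identity functor on $\Perv(Z(\fr{g}), \E)$.

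Since the monodromic category $\Perv^m(Z(\fr{g}), \E)$ is a full Serre subcategory of $\Perv(Z(\fr{g}), \E)$, applying Definition \ref{defn:cs_general} in this setting immediately yields $\Perv^{\CS}(Z(\fr{g}), \E) = \Perv^m(Z(\fr{g}), \E)$, and a character sheaf in the sense of \ref{defn:cs_general} is precisely a simple monodromic perverse sheaf. For the forward direction, if $\scrK = \scrL[\dim Z(\fr{g})]$ with $\scrL$ an additive local system, then Lemma \ref{abelian_cs:skyscaper_ls} gives $\scrL \cong \T(i_{\alpha*} M)$ for appropriate $\alpha$ and $M$, so $\T\scrK$ is supported at a single point; hence $\scrK$ is monodromic, and since $\T$ is a perverse equivalence carrying a simple skyscraper to $\scrK$, we obtain simplicity. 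For the reverse direction, a simple $\scrK$ in $\Perv^m(Z(\fr{g}), \E)$ has $\T\scrK$ a simple perverse sheaf on $Z(\fr{g})'$ supported on a finite set; irreducibility of support for simple perverse sheaves forces support at a single point $\alpha$, so $\T\scrK \cong i_{\alpha*} M$ for some module $M$, and Lemma \ref{abelian_cs:skyscaper_ls} exhibits $\scrK$ as a shifted additive local system in the sense of Definition \ref{defn:abelian_cs}.

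The main point requiring care is tracking the interplay between the coefficient ring $\E$ and the sheaf-theoretic setting: in the analytic setting, Lemma \ref{abelian_cs:skyscaper_ls}(2) restricts $\alpha = 0$ (so only the constant local system is additive), and in the $\O$-modular case one must be attentive to which modules $M$ make the skyscraper $i_{\alpha *} M$ genuinely simple, so that the bijection matches simple objects to simple objects rather than to the broader class of all skyscrapers. Once those compatibilities are verified, the equivalence of the two definitions is a formal consequence of the identification $\check{\scrH} = \id$ together with Lemma \ref{abelian_cs:skyscaper_ls}.
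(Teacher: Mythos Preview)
Your proposal is correct and follows essentially the same approach as the paper: both arguments observe that on the abelian Lie algebra $Z(\fr{g})$ the horocycle transform $\check{\scrH}$ degenerates to the identity, and then identify the simple monodromic perverse sheaves with the shifted additive local systems via Lemma~\ref{abelian_cs:skyscaper_ls}. Your version is in fact slightly more careful than the paper's, which tacitly takes $M = \E$ in Lemma~\ref{abelian_cs:skyscaper_ls}; your final paragraph correctly flags the simplicity bookkeeping that the paper leaves implicit.
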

	\begin{proof}
		Consider shifted local systems of the form $\scrL_\alpha = \T (i_{\alpha*} \E)$. From Lemma \ref{abelian_cs:skyscaper_ls}, all character sheaves (in the sense of \ref{defn:abelian_cs}) are of this form.
		The only Borel subalgebra of $Z(\fr{g})$ is all of $Z(\fr{g})$. As a result, $\check{\scrH}$ is simply the identity functor.
		The simple objects in $\Perv^m_B (Z (\fr{g}), \E)$ are the Fourier-Deligne transforms of skyscraper sheaves which are precisely the $\scrL_\alpha$'s.
	\end{proof}

	\begin{definition}\label{defn:orbtial}
		Define the \textbf{derived category of orbital sheaves}, denoted $D_G^\Orb (\fr{g}, \E)$, to be the full subcategory of $D_G (\fr{g}, \E)$ whose objects are supported in the closure of a $G$-orbit.
		The heart of this category, denoted $\Perv_G^{\Orb} (\fr{g}, \E)$ is given by $\Perv_G (\fr{g}, \E) \cap D_G^{\Orb} (\fr{g}, \E)$. 
		The simple perverse sheaves in $\Perv_G^{\Orb} (\fr{g}, \E)$ are called \textbf{orbital sheaves} on $\fr{g}$. Orbital sheaves can also be defined as the sheaves whose support is contained in the closure of a single $G$-orbit.
	\end{definition}

	Note that the closure of a $G$-orbit in $\fr{g}$ is a union of finitely many $G$-orbits (\cite{Jan04}, proof of 8.4). As a result, orbital sheaves could alternatively be defined as sheaves supported on finitely many $G$-orbits.

	\begin{proposition}\label{compositionofradonhasidentity}
		Let $\E$ be a field (for example $\E = \K, \F$). Let $\scrA \in \Perv_G^{\Orb} (\fr{g}, \E)$ be a simple orbital sheaf. Then $\scrA$ is a subquotient of $^p H^0 (\check{\scrG} \scrG (\scrA))$.
	\end{proposition}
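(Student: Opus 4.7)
The plan is to apply part (6) of Proposition \ref{radonandfouriercompats}, which gives
\[
\check{\scrG}\scrG\scrA \;\cong\; g_*\underline{\E}_{\tilde{\fr{g}}}\otimes^L\scrA,
\]
and then detect $\scrA$ as a composition factor of ${}^pH^0$ of the right-hand side by restricting to the open stratum of $\supp\scrA$. Write $\scrA=\IC(\overline{\scrO},\scrL)$, where $\scrO$ is the unique open $G$-orbit in $\supp\scrA$ and $\scrL$ is a simple $G$-equivariant $\E$-local system on $\scrO$, so that $\scrA|_{\scrO}=\scrL[\dim\scrO]$. Let $i\colon\overline{\scrO}\hookrightarrow\fr{g}$ be the closed inclusion and $j\colon\scrO\hookrightarrow\overline{\scrO}$ the open one. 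Every simple object of $\Perv_G(\overline{\scrO},\E)$ is either of the form $\IC(\overline{\scrO},\scrN)$ (sent by $j^*$ to the shifted local system $\scrN[\dim\scrO]$) or is supported inside $\overline{\scrO}\setminus\scrO$ (killed by $j^*$); since $j^*$ is exact, to show $\scrA$ is a composition factor of ${}^pH^0(\check{\scrG}\scrG\scrA)$ it suffices to show $\scrL[\dim\scrO]$ is a composition factor of $j^*i^*{}^pH^0(\check{\scrG}\scrG\scrA)$ in $\Perv_G(\scrO,\E)$.

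Because $\check{\scrG}\scrG\scrA$ is supported on $\overline{\scrO}$, the functor $i^*$ is $t$-exact on it; combined with the $t$-exactness of the open pullback $j^*$, this yields
\[
j^*i^*{}^pH^0(\check{\scrG}\scrG\scrA) \;=\; {}^pH^0\bigl((g_*\underline{\E}_{\tilde{\fr{g}}})|_{\scrO}\otimes^L\scrL[\dim\scrO]\bigr).
\]
Since $g$ is proper and $\scrO$ is a single $G$-orbit, the map $g^{-1}(\scrO)\to\scrO$ is a $G$-equivariant locally trivial fibration, so each cohomology sheaf $\scrM_i:=R^ig_*\underline{\E}|_{\scrO}$ is a $G$-equivariant local system with stalk $H^i(\calB_x,\E)$ at $x\in\scrO$, where $\calB_x:=g^{-1}(x)$ is the Grothendieck--Springer fiber. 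The perverse $t$-structure on the smooth variety $\scrO$ coincides with the ordinary $t$-structure shifted by $\dim\scrO$, so a direct computation gives
\[
j^*i^*{}^pH^0(\check{\scrG}\scrG\scrA) \;=\; (\scrM_0\otimes\scrL)[\dim\scrO].
\]

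The key observation is that $\scrM_0$ always contains $\underline{\E}_{\scrO}$ as a sub-local-system. Indeed, each stalk $H^0(\calB_x,\E)=\E^{\pi_0(\calB_x)}$ is a permutation module for the monodromy action of $\pi_1(\scrO,x)$ (and, equivariantly, for the component group $A_G(x)=G_x/G_x^\circ$) on $\pi_0(\calB_x)$. The diagonal map $1\mapsto\sum_{c\in\pi_0(\calB_x)}c$ provides an equivariant injection of the trivial representation in any characteristic, regardless of whether $|\pi_0(\calB_x)|$ is invertible in $\E$. Tensoring the resulting inclusion $\underline{\E}_{\scrO}\hookrightarrow\scrM_0$ with $\scrL$ and shifting yields a subobject $\scrL[\dim\scrO]\hookrightarrow(\scrM_0\otimes\scrL)[\dim\scrO]$ in $\Perv_G(\scrO,\E)$, exhibiting $\scrL[\dim\scrO]$ as a composition factor of $j^*i^*{}^pH^0(\check{\scrG}\scrG\scrA)$. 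By the reduction above, $\scrA$ is a composition factor of ${}^pH^0(\check{\scrG}\scrG\scrA)$.

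The main conceptual obstacle is precisely the modular subtlety that the trivial representation is not in general a direct summand of $\E[\pi_0(\calB_x)]$ (or, more strikingly, of the generic $W$-regular representation appearing in $g_*\underline{\E}$) when the characteristic of $\E$ divides the relevant group order; this prevents one from producing $\scrA$ as a direct summand of $\check{\scrG}\scrG\scrA$, which is how the analogous statement would typically be proved in characteristic $0$. The argument circumvents this by extracting only the weaker sub-local-system inclusion furnished by the purely set-theoretic diagonal embedding, which is available in any characteristic and still supplies the desired composition factor.
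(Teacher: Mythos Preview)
Your proof is correct and follows essentially the same route as the paper's. Both arguments restrict to the open orbit $\scrO$ and exhibit $\scrL[\dim\scrO]$ as a subobject of the restriction of ${}^pH^0(\check{\scrG}\scrG\scrA)$; the paper does this via the adjunction unit $\scrL \to H^0(g'_*(g')^*\scrL)$ (nonzero since it corresponds to the identity on $g'^*\scrL$), while you invoke Proposition~\ref{radonandfouriercompats}(6) and unpack the same map explicitly as the constant-function inclusion $\underline{\E}_{\scrO}\hookrightarrow \scrM_0=R^0g_*\underline{\E}|_{\scrO}$ tensored with $\scrL$. Your stalkwise description as the diagonal into the permutation module $\E^{\pi_0(\calB_x)}$ is exactly the adjunction unit viewed fiberwise, so the two arguments coincide; your phrasing has the modest advantage of making transparent why the inclusion survives in positive characteristic.
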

	\begin{proof}
		Let $\scrA = \IC (\scrO, \scrL)$ for some orbit $\scrO$ and some irreducible local system $\scrL$ on $\scrO$. 
		Consider the cartesian square,

		{ \centering
		\begin{tikzcd}
			\tilde{\fr{g}} \times_{\fr{g}} \scrO \arrow[d, "g'"'] \arrow[r] & \tilde{\fr{g}} \arrow[d, "g"] \\
			\scrO \arrow[r, "j"']                                           & \fr{g}                       
			\end{tikzcd}
		\par }

		We will first show that $\scrL$ is a subobject of $H^0 (g_* g^* \scrL)$. Consider the map given by the adjunction 
		\[ \scrL \to H^0 (g_* g^* \scrL)\]
		This map is nonzero since it comes from the identity map between local systems $g^* \scrL \to g^* \scrL$. Since $\scrL$ is irreducible, we then have that $\scrL$ is a composition factor of $H^0 (g_* g^* \scrL)$.

		By proper base change, $j^* (g_*g^* \scrA) = g'_* g^* \scrL [\dim \fr{g}]$. As a result, some of the composition factors of $^p H^0 (g_* g^* \scrA)$ are given by IC-extensions of local systems on $\scrO$ which are composition factors of $^p H^i (g_* g^* \scrL)$. 
		In particular, $\scrA \cong \IC (\scrO, \scrL)$ is a subquotient of $^p H^0 (g_* g^* \scrA [\dim \fr{g}]) \cong  {}^p H^0 (\check{\scrG} \scrG (\scrA))$.
	\end{proof}

	The major result relating orbital sheaves with character sheaves is stated in Corollary \ref{cs_are_fourier_of_orbital}. To prove this we first need a lemma on geometry of orbits.

	\begin{lemma}\label{hom_alg_reduction_to_simple}
		Let $G,H$ be groups, and let $X$ be a $G$-variety and $Y$ an $H$-variety. 
		Suppose $F : \Perv_G (X, \E) \to \Perv_H (Y, \E)$ is an exact functor. Let $\scrB \in \Perv_G (X, \E)$ and suppose $\scrA \hookrightarrow F(\scrB)$ is a simple subquotient.
		Then there exists a simple $\scrC \in \Perv_G (X, \E)$ such that $\scrA$ is a subquotient of $F (\scrC)$.
	\end{lemma}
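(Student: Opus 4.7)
The plan is to exploit the fact that $\Perv_G(X, \E)$ is a finite-length abelian category (in the field cases $\E = \K, \F$, where subsequent applications of the lemma take place) together with the exactness of $F$. The argument is formal and reduces to chasing a Jordan--Hölder factor through a filtration of $\scrB$.

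First I would fix a finite composition series
\[ 0 = \scrB_0 \subset \scrB_1 \subset \cdots \subset \scrB_n = \scrB \]
in $\Perv_G(X, \E)$, with simple successive quotients $\scrC_i := \scrB_i/\scrB_{i-1}$. Applying the exact functor $F$ produces short exact sequences
\[ 0 \to F(\scrB_{i-1}) \to F(\scrB_i) \to F(\scrC_i) \to 0 \]
in $\Perv_H(Y, \E)$, and each $F(\scrC_i)$ is of finite length.

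Next, by induction on $i$, concatenating composition series of $F(\scrB_{i-1})$ and $F(\scrC_i)$ gives a composition series of $F(\scrB_i)$. In particular, the multiset of Jordan--Hölder factors of $F(\scrB) = F(\scrB_n)$ is the disjoint union, with multiplicities, of the Jordan--Hölder factors of the $F(\scrC_i)$. Since $\scrA$ is a simple subquotient of $F(\scrB)$, it is one of these Jordan--Hölder factors, so it must occur as a composition factor of $F(\scrC_i)$ for some $i$; that is, $\scrA$ is a subquotient of $F(\scrC_i)$. Taking $\scrC := \scrC_i$, which is simple by construction, finishes the proof.

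There is no real obstacle here: the only hypothesis that needs checking is finite length of the relevant perverse categories, which is standard over a field. The rest is just Jordan--Hölder plus exactness of $F$, with no use of the group actions or of the geometry of $X$ and $Y$.
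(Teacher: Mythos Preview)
Your proof is correct and is exactly the standard Jordan--H\"older argument one would expect; the paper itself gives no proof for this lemma, treating it as a routine piece of homological algebra (and later invokes it only via phrases like ``by routine homological algebra arguments''). Your observation that finite length is needed, and that this holds in the field cases $\E = \K, \F$ where the lemma is actually applied, is the one point worth making explicit, and you have done so.
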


	\begin{lemma}[\!\!\textnormal{\cite{M}, Lemma 3.3}]\label{orbit_calc_lemma}
		\begin{enumerate}
			\item  Let $x = s + n \in \fr{l}$ be a Jordan decomposition with $x$ semisimple and $n$ nilpotent. Pick a parabolic subgroup $P$ of $G$ with Levi decomposition $P = L \ltimes U$ ( with Lie algebras $\fr{p} = \fr{l} \oplus \fr{n}$) such that $\fr{l} = Z_g (s)$. Then $^U x = x + \fr{n}$ for the unipotent subgroup $U$ corresponding to $\fr{n}$.
			\item For any $\alpha \in \fr{h}$ the semisimple components of all elements of $\nu^{-1} \alpha$ are conjugate.
			\item For any $y \in \fr{g}$, $\nu ({}^G y \cap \fr{b})$ is finite.
		\end{enumerate}
	\end{lemma}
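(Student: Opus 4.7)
The plan is to prove the three parts in order, since (2) will follow from (1) applied to a suitable sub-Levi, and (3) from (2) combined with the finiteness of Weyl group orbits. For (1), I consider the orbit map $\phi \colon U \to x + \fr{n}$, $u \mapsto \Ad_u(x)$. First $\phi(U) \subseteq x + \fr{n}$ because $U$ acts trivially on $\fr{p}/\fr{n} = \fr{l}$ and $x \in \fr{l}$. The differential at the identity is $\operatorname{ad}(x)|_\fr{n} \colon \fr{n} \to \fr{n}$. Since $\fr{l} = Z_\fr{g}(s)$, all $\operatorname{ad}(s)$-weights on $\fr{n}$ are nonzero, so $\operatorname{ad}(s)|_\fr{n}$ is invertible; since $[s,n]=0$ and $\operatorname{ad}(n)|_\fr{n}$ is nilpotent commuting with $\operatorname{ad}(s)|_\fr{n}$, the sum $\operatorname{ad}(x)|_\fr{n}$ is invertible too. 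By $U$-equivariance of $\phi$, the differential is invertible at every point, so $\phi$ is étale; and $\phi$ is injective since the stabilizer of $x$ in $U$ has Lie algebra $\fr{n} \cap Z_\fr{g}(x) \subseteq \fr{n} \cap \fr{l} = 0$. Together with the equality of dimensions, these facts imply $\phi$ is an isomorphism, and hence $^U x = x + \fr{n}$.

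For (2), fix $\alpha \in \fr{h}$, set $\fr{l}_\alpha = Z_\fr{g}(\alpha)$, and let $P_\alpha \supseteq B$ be the corresponding standard parabolic with unipotent radical $U_\alpha$ of Lie algebra $\fr{u}_\alpha$. The root-space decomposition gives $\fr{n} = (\fr{n} \cap \fr{l}_\alpha) \oplus \fr{u}_\alpha$. Given $x \in \nu^{-1}\alpha$, write $x = \alpha + m_1 + m_2$ with $m_1 \in \fr{n} \cap \fr{l}_\alpha$ and $m_2 \in \fr{u}_\alpha$. Because $m_1$ is nilpotent and commutes with $\alpha$, the Jordan decomposition of $\alpha + m_1 \in \fr{l}_\alpha$ is exactly $\alpha + m_1$. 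Applying (1) to $\alpha + m_1$ with parabolic $P_\alpha$ yields $^{U_\alpha}(\alpha + m_1) = \alpha + m_1 + \fr{u}_\alpha \ni x$, so there exists $u \in U_\alpha$ with $\Ad_u(\alpha + m_1) = x$. The semisimple part of $x$ is therefore $\Ad_u(\alpha)$, which is $G$-conjugate to $\alpha$; in particular the semisimple parts of all elements of $\nu^{-1}\alpha$ are mutually $G$-conjugate.

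For (3), let $y \in \fr{g}$ have semisimple part $s_y$. For any $x \in {}^G y \cap \fr{b}$, the semisimple part of $x$ is $G$-conjugate to $s_y$, and by (2) it is also $G$-conjugate to $\nu(x)$. Hence $\nu({}^G y \cap \fr{b}) \subseteq {}^G s_y \cap \fr{h}$, and the latter is a single $W$-orbit (any two $G$-conjugate semisimple elements of $\fr{h}$ differ by an element of $N_G(\fr{h})/Z_G(\fr{h}) = W$), hence finite.

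The main technical obstacle is the surjectivity step in (1). The differential and stabilizer computations yield étaleness and injectivity without difficulty, but extracting an actual surjection in positive characteristic—where the exponential map is unavailable and injective polynomial endomorphisms of affine space are not guaranteed to be isomorphisms by elementary means—requires an additional argument. The cleanest route is an inductive one along an $L$-stable filtration $\fr{n} = \fr{n}^{(1)} \supset \fr{n}^{(2)} \supset \cdots \supset 0$ (for instance by the lower central series, refined to be $L$-stable via root grading) on whose associated graded pieces $\operatorname{ad}(x)$ remains invertible, exhibiting each element of $x + \fr{n}$ layer by layer as a value of $\phi$.
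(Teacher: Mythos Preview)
Your arguments for (2) and (3) are correct and essentially what Mirković does. The substantive difference is in (1), where the paper takes a shorter route that bypasses the surjectivity obstacle you flag.

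The paper's argument for (1) runs as follows: from $U$-invariance of the projection $\pi:\fr{p}\to\fr{l}$ one gets ${}^U x\subseteq x+\fr{u}$; your differential computation (which the paper leaves implicit) shows $\dim({}^U x)=\dim U=\dim(x+\fr{u})$, so ${}^U x$ is dense in $x+\fr{u}$. The key step you are missing is then the classical Kostant--Rosenlicht theorem: every orbit of a unipotent group acting on an affine variety is closed. Hence ${}^U x$ is simultaneously dense and closed in the irreducible variety $x+\fr{u}$, and equality follows. This single citation replaces your proposed filtration-and-induction argument entirely and works uniformly in all characteristics.

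Two further remarks on your version of (1). First, your injectivity step has a small gap in positive characteristic: showing $\Lie(\operatorname{Stab}_U(x))=0$ only gives that the stabilizer is finite, and unipotent groups in characteristic $p$ can have nontrivial finite subgroups (e.g.\ $\F_p\subset\G_a$). This does not matter once you switch to the closure argument, since only the dimension equality is needed. Second, even granting \'etaleness and injectivity, an \'etale injective morphism between smooth irreducible varieties of the same dimension need not be surjective (think of $\G_m\hookrightarrow\A^1$), so your concern about surjectivity is well-founded; it is precisely the closedness of unipotent orbits that closes this gap.
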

	\begin{proof}
		Statements (2) and (3) are proved in \cite{M}, Lemma 3.3 in the Lie algebra setting. For statement (1), Mirković refers to Proposition 2.7 of \cite{ICC} which is phrased in the algebraic group setting.
		For the sake of completeness, we will include the details in proving (1) following \emph{loc. cit.}.

		Let $\pi : \fr{p} \to \fr{l}$ denote the standard projection. Let $u \in U$.
		Since $\pi$ is $U$-invariant, we have that $\pi (x) = \pi ({}^u x)$. Moreover, since $x \in \fr{l}$, one has that ${}^u x \in \fr{p}$. The previous two remarks give that ${}^u x \in x + \fr{u}$, and hence, ${}^U x \subseteq x + \fr{u}$.
		Observe that $\dim ({}^U x) = \dim U = \dim (x + \fr{u})$. Therefore, ${}^U x$ is dense in $x+\fr{u}$. Additionally, since $U$ is a unipotent group and $\fr{g}$ is an affine variety, one has that ${}^U x$ is a closed subvariety of $\fr{g}$.
		Therefore, ${}^U x = x + \fr{u}$ as desired.
	\end{proof}

	\begin{proposition}\label{orbital_and_radon}
		The functor $^p H^0 \check{\scrG}$ restricts to a functor $\Perv_{B, s} (\fr{g}/\fr{n}, \E) \to \Perv_G^{\Orb} (\fr{g}, \E)$. Moreover, if $\scrC$ is an orbital sheaf, then there exists some $\scrA \in \Perv_{B, s} (\fr{g}/\fr{n}, \E)$ such that $\scrC$ is a subquotient of $^p H^0 \check{\scrG}$.
	\end{proposition}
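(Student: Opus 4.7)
The plan is to prove both assertions separately, using throughout the $t$-exactness of $\check{\scrG} = g_*$ on perverse sheaves. Since the Grothendieck-Springer map $g : \tilde{\fr{g}} \to \fr{g}$ is proper and semismall, $g_*$ preserves perversity, so $^p H^0 \check{\scrG}$ agrees with $\check{\scrG}$ on $\Perv$ and is exact.

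For the first assertion, I would reduce to generators of $\Perv_{B, s}$. Via the Fourier identification $\fr{g}/\fr{n} \cong \fr{b}'$ and Lemma \ref{abelian_cs:skyscaper_ls}, such generators take the form $\nu^* \scrF$ for $\scrF$ a shifted skyscraper at some $\alpha \in \fr{h}$. The support of $\check{\scrG}(\nu^* \scrF)$ is $G \cdot (\alpha + \fr{n})$, which I claim is a finite union of $G$-orbits. Choose the parabolic $P \supseteq B$ whose Levi $L$ satisfies $\fr{l} = Z_\fr{g}(\alpha)$, with unipotent radical $U_P \subseteq U$ and corresponding Lie algebra $\fr{n}_P \subseteq \fr{n}$. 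Any $y \in \alpha + \fr{n}$ decomposes as $y = (\alpha + n_\fr{l}) + n_P$ with $n_\fr{l} \in \fr{n} \cap \fr{l}$ and $n_P \in \fr{n}_P$. Since $\alpha$ is central in $\fr{l}$, the Jordan decomposition of $\alpha + n_\fr{l}$ has semisimple part $\alpha$ and nilpotent part $n_\fr{l}$, so Lemma \ref{orbit_calc_lemma}(1) gives that $y$ is $U_P$-conjugate to $\alpha + n_\fr{l}$. There being only finitely many nilpotent $L$-orbits in $\fr{l}$, we conclude that $G \cdot (\alpha + \fr{n})$ consists of finitely many $G$-orbits and $\check{\scrG}(\nu^* \scrF)$ is orbital; the general case follows by exactness and composition series.

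For the second assertion, let $\scrC$ be a simple orbital sheaf supported on the closure of some orbit $\scrO$. Proposition \ref{compositionofradonhasidentity} gives $\scrC$ as a subquotient of $^p H^0 \check{\scrG} \scrG(\scrC)$, which by $t$-exactness rewrites as $\check{\scrG}({}^p H^0 \scrG(\scrC))$. Under the identification $D_G(\tilde{\fr{g}}) \cong D_B(\fr{b})$, the sheaf $\scrG(\scrC)$ is supported on $\overline{\scrO} \cap \fr{b}$, and Lemma \ref{orbit_calc_lemma}(3) shows $\nu(\overline{\scrO} \cap \fr{b}) \subseteq \fr{h}$ is finite. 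Combined with $B$-equivariance, this places $^p H^0 \scrG(\scrC)$ in $\Perv_{B, s}$, and Lemma \ref{hom_alg_reduction_to_simple} applied to $\check{\scrG}|_{\Perv_{B, s}}$ then produces a simple $\scrA \in \Perv_{B, s}$ with $\scrC$ a subquotient of $^p H^0 \check{\scrG}(\scrA)$. The main obstacle is justifying that having support with finite $\nu$-image suffices for membership in $\Perv_{B, s}$ as defined, i.e., in the triangulated subcategory generated by $\nu^*$-pullbacks; this should follow from exploiting $B$-equivariance, since $B$ acts trivially on $\fr{h}$ through $\nu$, so that $B$-equivariant perverse sheaves in each fiber $\alpha + \fr{n}$ are constrained enough to be $\nu^*$-pullbacks after appropriate shifts.
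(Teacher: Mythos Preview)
Your overall strategy mirrors the paper's: estimate the support of $\check{\scrG}(\scrD)$ via Lemma \ref{orbit_calc_lemma} for the first assertion, and invoke Proposition \ref{compositionofradonhasidentity} together with Lemma \ref{orbit_calc_lemma}(3) for the second. However, the claim that $\check{\scrG} = g_*$ is $t$-exact is false, and your argument for the second assertion leans on it in an essential way.

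The Grothendieck--Springer map $g$ is small, but it is not finite: the fiber over $0 \in \fr{g}$ is the full flag variety $G/B$. Smallness only guarantees that $g_*\underline{\E}_{\tilde{\fr{g}}}[\dim\tilde{\fr{g}}]$ (and more generally pushforwards of shifted local systems on $\tilde{\fr{g}}$) are perverse; it does \emph{not} imply that $g_*$ preserves perversity for arbitrary perverse sheaves. A proper map with fibers of dimension up to $d$ has perverse cohomological amplitude $[-d,d]$, and here $d = \dim G/B > 0$. Consequently the rewriting $^pH^0\check{\scrG}\scrG(\scrC) = \check{\scrG}\bigl({}^pH^0\scrG(\scrC)\bigr)$ in your second part is unjustified. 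The paper avoids this altogether by taking $\scrA = \scrG(\scrC)$ directly and asserting (via the support bound from Lemma \ref{orbit_calc_lemma}(3), exactly as you outline) that it lies in the supported category; no $t$-exactness is invoked. Similarly, for the first assertion the paper does not need exactness: the support estimate $\supp(\check{\scrG}\scrD) \subseteq G \cdot \nu^{-1}\theta$ already places $\check{\scrG}\scrD$ in $D_G^\Orb$, whence every $^pH^i\check{\scrG}(\scrD)$ lands in $\Perv_G^\Orb$. Your reduction to simple generators and appeal to exactness on composition series is therefore unnecessary, and in its stated form would also rely on the false $t$-exactness.

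Regarding your closing concern: in the paper's own proof, $\Perv_{B,s}$ is used operationally as the category of $B$-equivariant perverse sheaves whose support lies in $\nu^{-1}\theta$ for some finite $\theta \subseteq \fr{h}$, so the obstacle you raise about the ``generated by pullbacks'' description does not actually arise in the argument.
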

	\begin{proof}	
		We will first prove that $^p H^0 \check{\scrG}$ takes $\fr{h}$-supported complexes to orbital sheaves. Let $\scrD \in \Perv_{B,s} (\fr{g}/\fr{n}, \E)$ be a simple perverse sheaf.
		As a result, $\supp (\scrD) \subseteq \nu^{-1} \alpha$ for some $\alpha \in \fr{h}$. The Radon transform $\check{\scrG}$ does not change the support much; in fact, at worst, it lies in the $G$-orbit of the original support. As a consequence,
		$\supp (\check{\scrG} \scrD) \subseteq G \cdot (\nu^{-1} \alpha)$.
		As a result, $\supp (\check{\scrG} \scrD)$ is contained in the union of $G$-orbits of $\fr{g}$ which meet $\nu^{-1} \alpha$. Since there are only finitely many nilpotent orbits, it suffices to show that semisimple parts of $\nu^{-1} \alpha$ are contained in finitely many $G$-orbits in $\fr{g}$.
		By Lemma \ref{orbit_calc_lemma}, we have that the semisimple parts of $\nu^{-1} \alpha$ are conjugate, so the previous statement does hold. Therefore, $\check{\scrG} \scrD \in D_G^{\Orb} (\fr{g}, \E)$, and so $^p H^0 \check{\scrG} \scrD \in \Perv_G^{\Orb} (\fr{g}, \E)$.
		By a standard argument in homological algebra, one obtains that $^p H^0 \check{\scrG}$ takes $\fr{h}$-supported perverse sheaves to orbital sheaves.

		It remains to check that if $\scrC$ is an orbital sheaf, then we can find it as a subquotient of a sheaf in the essential image of $^p H^0 \check{\scrG}$.  
		By Proposition \ref{compositionofradonhasidentity}, $\scrC$ is a subquotient of $^p H^0 \check{\scrG} (\scrG (\scrC))$. We also have that $\scrG (\scrC) = j^\circ \scrC \in \Perv_{B,s} (\fr{g}/\fr{n})$ which completes the proof.
	\end{proof}

	\begin{corollary}\label{cs_are_fourier_of_orbital}
		The Fourier transform restricts to an equivalence of categories,
		\[\T_{\fr{g}'} : \Perv_G^{\Orb} (\fr{g}', \E) \to \Perv_G^{\CS} (\fr{g}, \E).\]
		In particular, character sheaves are precisely the Fourier transforms of orbital sheaves.
	\end{corollary}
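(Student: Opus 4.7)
Since $\T_{\fr{g}'}$ is a $t$-exact auto-equivalence of the relevant derived categories (via the identification $\fr{g} \cong \fr{g}'$), to prove the corollary it suffices to show that it carries $\Perv_G^{\Orb}(\fr{g}',\E)$ into $\Perv_G^{\CS}(\fr{g},\E)$ and that its quasi-inverse carries $\Perv_G^{\CS}(\fr{g},\E)$ back into $\Perv_G^{\Orb}(\fr{g}',\E)$. Both subcategories are Serre (closed under subquotients and extensions), so by exactness of $\T$ it is enough to verify these containments on generating families. The plan is to use the Fourier-Radon compatibilities of Proposition \ref{radonandfouriercompats} to transport the generators of one side onto the generators of the other, with Proposition \ref{orbital_and_radon} as the bridge on the orbital side.

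For the forward inclusion, let $\scrC$ be a simple orbital sheaf. By Proposition \ref{orbital_and_radon}, $\scrC$ is a subquotient of ${}^p H^0 \check{\scrG}\scrA$ for some supported $\scrA$. Combining $t$-exactness of $\T$ with Proposition \ref{radonandfouriercompats}(3) in its Lie-algebra incarnation $\T_{\fr{g}'}\circ\check{\scrG} \simeq \check{\scrH}\circ\T_{\fr{b}}$ (via the duality $\fr{b}' \cong \fr{g}/\fr{n}$), one finds that $\T_{\fr{g}'}\scrC$ is a subquotient of ${}^p H^0 \check{\scrH}(\T_{\fr{b}}\scrA)$. Since Fourier interchanges the supported category with the monodromic category, $\T_{\fr{b}}\scrA \in \Perv_B^m(\fr{g}/\fr{n},\E)$, and the result lies in $\Perv_G^{\CS}(\fr{g},\E)$ by Definition \ref{defn:cs_general}. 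Serre closure then handles general orbital sheaves.

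For the reverse inclusion, let $\scrK$ be a character sheaf, so by definition it is a subquotient of ${}^p H^i \check{\scrH}\scrB$ for some $\scrB \in \Perv_B^m(\fr{g}/\fr{n},\E)$. Proposition \ref{radonandfouriercompats}(4), in the form $\T_{\fr{g}}\circ\check{\scrH} \simeq \check{\scrG}\circ\T_{\fr{g}/\fr{n}}$, yields that $\T_{\fr{g}}\scrK$ is a subquotient of ${}^p H^i \check{\scrG}(\T_{\fr{g}/\fr{n}}\scrB)$. The Fourier dual $\T_{\fr{g}/\fr{n}}\scrB$ is supported, and the key observation is that the support estimate $\supp(\check{\scrG}\scrD) \subseteq G \cdot \nu^{-1}\alpha$ appearing in the proof of Proposition \ref{orbital_and_radon} depends only on the support of $\scrD$; thus $\check{\scrG}$ sends the entire derived category of supported sheaves into $D_G^{\Orb}(\fr{g}',\E)$. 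Every perverse cohomology is therefore orbital, and so is $\T_{\fr{g}}\scrK$ by subquotient closure.

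The principal difficulty is bookkeeping rather than conceptual: one must correctly interpret the compatibilities of Proposition \ref{radonandfouriercompats} (stated on the Grothendieck-Springer vector bundles) in their quotient-equivalent form on $\fr{b}$ and $\fr{g}/\fr{n}$, and check that Fourier interchanges $\Perv_B^m(\fr{g}/\fr{n},\E)$ with the supported category under the duality $\fr{b}' \cong \fr{g}/\fr{n}$ coming from the $G$-invariant bilinear form. Once these identifications are fixed, the corollary reduces to one application of Fourier-Radon commutation together with Proposition \ref{orbital_and_radon}.
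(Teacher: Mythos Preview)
Your proposal is correct and follows essentially the same route as the paper: both arguments reduce the corollary to the single Fourier--Radon compatibility $\T\circ\check{\scrH}\cong\check{\scrG}\circ\T$ (Proposition~\ref{radonandfouriercompats}(3)--(4) in its Lie-algebra form) together with Proposition~\ref{orbital_and_radon}, using that $\T$ interchanges the supported and monodromic categories on $\fr{b}\cong(\fr{g}/\fr{n})'$. Your write-up is slightly more explicit about the bookkeeping (the bundle vs.\ Lie-algebra incarnations of the Radon transforms, and the passage through arbitrary perverse cohomology degrees), but the logical structure is identical to the paper's.
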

	\begin{proof}
		First we note that $\Perv^m_B (\fr{g}/\fr{n}, \E) = \T_{\fr{b}} (\Perv_{B,s} (\fr{b}, \E))$ and $\T \circ \check{\scrH} = \check{\scrG} \circ \T$.
		Suppose $\scrC$ is an orbital sheaf. 
		By Proposition \ref{orbital_and_radon}, $\scrC$ is a subquotient of $\check{\scrG} (\scrD)$ for $\scrD \in \Perv_{B,s} (\fr{g}/\fr{n}, \E)$.
		We can find some $\scrB \in \Perv^m_B (\fr{g}/\fr{n}, \E)$ such that $\scrD = \T \scrB$. As a consequence, $\scrC$ is a subquotient of $(\check{\scrG} \circ \T) (\scrB) = (\T \circ \check{\scrH}) (\scrB)$. Therefore, $\T \scrC$ is a subquotient of $\check{\scrH} \scrB$.
		Equivalently, $\T \scrC$ is a character sheaf.

		Now suppose $\scrA$ is a subquotient of $\check{\scrH} \scrB$ for $\scrB \in \Perv_B^m (\fr{g}/\fr{n}, \E)$, i.e., $\scrA$ is a character sheaf.
		It follows that $\T \scrA$ is a subquotient of $(\T \circ \check{\scrH} ) (\scrB) = (\check{\scrG} \circ \T) (\scrB)$.
		By Proposition \ref{orbital_and_radon}, $\T \scrA$ is an orbital sheaf, and hence, $\scrA$ is the Fourier-Deligne transform of an orbital sheaf.
	\end{proof}

	\begin{example}\label{ex:sl2_nuances}
		Note that $g$ is a small resolution. In the characteristic 0 setting, one can then use the decomposition theorem to deduce that $g_* \underline{\E}_{\tilde{\fr{g}}}$ is semisimple.
		This does not hold in the modular setting.

		Consider the case of $\fr{g} = \mathfrak{s}\mathfrak{l}_2 (\C)$ and $\E$ a field of characteristic 2. If $g_* \underline{\E}_{\tilde{\fr{g}}}$ were semisimple, then since $\T$ is an autoequivalence of categories, the Springer sheaf $\scrA = s_* \underline{\E}_{\tilde{\scrN}} [2]$ would be semisimple.
		The adjoint orbits of $ \mathfrak{s}\mathfrak{l}_2 (\C)$ are well studied (see for example, Chapter 3 of \cite{CM}). 
		There are two distinct 2 nilpotent orbits, the orbit $C_0$ of $0$ which is just a point and the regular orbit $C_r$ consisting of $\scrN \setminus \{0\}$. 
		It is well known that
		$$\IC (C_r, \underline{\E})_0 = \begin{cases} \E & i=-2, -1 \\ 0 & \text{else} \end{cases}.$$
		On the other hand,
		$$\scrA \vert_{C_r} = \underline{\E} [2] \:\:\:\:\: \scrA\vert_0 = H^* (\mathbb{P}^1) [2]$$
		So $\scrA$ is not semisimple. Nonetheless, $\IC (C_0,\underline{\E})$ and $\IC (C_r, \underline{\E})$ appear in its composition series.
		More generally, for any reductive group $G$, one may note that $\IC (C_0, \underline{\E})$, the skyscraper sheaf, is always a simple quotient of the Springer sheaf.
	\end{example}

	\subsection{Restriction and Induction Functors}\label{equiv_functor_review}
	
	We will first recall some equivariant sheaf functors. The basic definitions and properties of these functors can be found in \cite{A1}. 
	Let $G$ be an algebraic group and let $X$ be an $G$-variety over $k$. 
	If $H \subset G$ is a closed subgroup, then there is an obvious forgetful functor $\For_H^G : D_G (X, \E) \to D_H (X, \E)$. The forgetful functor admits both left and right adjoints, denoted by 
	\[ \Av_{H!}^G : D_H (X, \E) \to D_G (X,\E) \:\:\:\: \text{and} \:\:\:\: \Av_{H*}^G : D_H (X,\E) \to D_G (X,\E),\]
	respectively. We will briefly describe how these are constructed. Let $\sigma : G \times X \to X$ be the action map of $G$ on $X$. There is an induced map on the induction space $\overline{\sigma} : G \times^H X \to X$.
	Let $i : X \to G \times^H X$ be the obvious map $i(x) = (e,x)$. It is well known that 
	\[i^*[-\dim G/H]  \For_H^G  \cong i^! [\dim G/H] \For_H^G :  D_G (G \times^H X, \E) \to D_H (X, \E) \label{eq:ind_equiv}\] 
	is an equivalence of categories. We then define the averaging functors via
	\[ \Av_{H*}^G = \overline{\sigma}_* \circ (i^* \circ \For_H^G)^{-1}\]
	and
	\[ \Av_{H!}^G = \overline{\sigma}_! \circ (i^! \circ \For_H^G)^{-1}.\]
	
	Let $P$ be a parabolic subgroup of $G$. It has a Levi decomposition $P = L \ltimes U$. Let $\fr{p}, \fr{l}, \fr{u}$ denote their Lie algebras. We have the following Cartesian diagram:

	{ \centering
	\begin{tikzcd}
		& \fr{p} \arrow[ld, "i"'] \arrow[rd, "\pi"] &                        \\
\fr{g} \arrow[rd, "\tau"'] &                                           & \fr{l} \arrow[ld, "j"] \\
		& \fr{g}/\fr{u}                             &                       
\end{tikzcd}
	\par }

	We remark that $j$ is the adjoint of $\pi$ after identifying the $\fr{l}'$ with $\fr{l}$ and noting that $\fr{p}'$ can be identified with $\fr{g}/\fr{u}$ using the fixed $G$-invariant bilinear form. 
	Similarly, $\tau$ is the adjoint of $i$ after identifying $\fr{g}'$ with $\fr{g}$. We define the induction functor $\Ind_P^G : D_L (\fr{l}, \E) \to D_G (\fr{g}, \E)$ by the formula
	\[ \Ind_P^G = \Av_{P!}^G \circ i_! \circ \pi^* \circ \Av_{L!}^P \cong  \Av_{P*}^G \circ i_* \circ \pi^! \circ \Av_{L*}^P \]
	The induction functor admits both left and right adjoints, which are nearly the same with some subtle differences. We will call both adjoints the restriction functor and use notation to indicate which one is being used. The left adjoint will be defined by
	\[ \LRes_P^G = \For_L^P \circ \pi_! \circ i^* \circ \For_P^G \]
	and the right adjoint will be defined by
	\[ \RRes_P^G = \For_L^P \circ \pi_* \circ i^! \circ \For_P^G .\]
	It is a simple consequence of proper base change that we could equivalently define these restriction functors by
	\[ \LRes_P^G = \For_L^P \circ j^* \circ \tau_! \circ \For_P^G \]
	\[ \RRes_P^G = \For_L^P \circ j^! \circ \tau_* \circ \For_P^G. \]
	Moreover, we will sometimes drop $P$ and $G$ from the restriction and induction functors' notation when they are clear from context. We may also include the Levi, such as $\Ind_{L \subset P}^G$, if the choice of Levi is not obvious.

	We will now establish some basic results for these functors.

	\begin{lemma}[\cite{AHJR1}, Lemma 2.14]\label{lem:ind_equiv_version_of_ind}
		There is an isomorphism of functors
		\[\Ind_P^G \cong \mu_! \circ I_P^G \circ \pi^* [2 \dim (G/P)],\]
		where $I_P^G : D_P (\fr{p}, \E) \to D_P (G \times^P \fr{p}, \E)$ is the inverse of the induction equivalence induction (\ref{eq:ind_equiv}) and $\mu : G \times^P \fr{p} \to \fr{g}$ is induced by the adjoint action.
	\end{lemma}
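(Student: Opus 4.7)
The plan is to reduce to a base-change identity on the Cartesian square coming from the closed embedding $\fr{p} \hookrightarrow \fr{g}$, and then absorb the two averaging functors appearing in $\Ind_P^G = \Av_{P!}^G \circ i_! \circ \pi^* \circ \Av_{L!}^P$ into a single pushforward $\mu_!$ together with the shift $[2\dim(G/P)]$.

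The key geometric input is the Cartesian square

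{ \centering
\begin{tikzcd}
\fr{p} \arrow[r, "i"] \arrow[d, "\alpha"'] & \fr{g} \arrow[d, "i_\fr{g}"] \\
G \times^P \fr{p} \arrow[r, "\tilde{i}"'] & G \times^P \fr{g}
\end{tikzcd}
\par }

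\noindent where $\alpha$ and $i_\fr{g}$ are the fiber inclusions $x \mapsto [e,x]$ and $\tilde{i}$ is induced by $i$, together with the identity $\overline{\sigma} \circ \tilde{i} = \mu$, in which $\overline{\sigma} : G \times^P \fr{g} \to \fr{g}$ is the induced action map. Writing $J_\fr{g}$ for the inverse of the induction equivalence on $\fr{g}$ viewed as a $G$-variety, I would first verify the identity $J_\fr{g} \circ i_! \cong \tilde{i}_! \circ I_P^G$ by applying the quasi-inverse $i_\fr{g}^*[-\dim G/P] \For_P^G$ to both sides and invoking proper base change along the square above: the left side evaluates to $i_! \scrA$ by definition of $J_\fr{g}$, and the right side evaluates to $i_! \alpha^*[-\dim G/P]\For_P^G I_P^G \scrA = i_! \scrA$ by base change and the defining property of $I_P^G$. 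Combining with $\overline{\sigma} \tilde{i} = \mu$ then yields $\Av_{P!}^G \circ i_! \cong \mu_! \circ I_P^G$, up to the shift arising from the discrepancy between $i_\fr{g}^!$ appearing in the definition of $\Av_{P!}^G$ and $i_\fr{g}^*[-\dim G/P]$ appearing in the canonical equivalence defining $I_P^G$.

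For the remaining factor $\pi^* \circ \Av_{L!}^P$, the crucial point is that $U$ acts trivially on $\fr{l}$: under this hypothesis $P \times^L \fr{l}$ identifies with $U \times \fr{l}$ and the induced action map degenerates to projection, so unwinding the definition of $\Av_{L!}^P$ via the induction equivalence on $\fr{l}$ as an $L$-variety and applying the projection formula with $H^*_c(U) \cong \E[-2\dim U]$ produces the complementary shift. Adding up the contributions of the two averaging functors yields exactly $[2\dim(G/P)]$, and the result follows. The main obstacle is just the careful bookkeeping of these shifts — the geometric heart of the argument (the Cartesian square and the identity $\overline{\sigma}\tilde{i} = \mu$) is clean and short, but matching the paper's conventions for the induction equivalence and for both averaging functors simultaneously requires patience; everything else is routine base change and projection formula.
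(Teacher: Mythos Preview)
The paper does not supply its own proof of this lemma; it is simply cited from \cite{AHJR1}, Lemma 2.14. So there is nothing to compare against directly, and your outline stands on its own.

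Your strategy is the standard one and is correct. The Cartesian square you write down is the right geometric input, and the identity $J_\fr{g} \circ i_! \cong \tilde{i}_! \circ I_P^G$ via base change together with $\overline{\sigma}\circ\tilde{i} = \mu$ is exactly how one collapses $\Av_{P!}^G \circ i_!$ to $\mu_! \circ I_P^G$ up to shift. Likewise, the observation that $U$ acts trivially on $\fr{l}$ (since the $P$-action on $\fr{l} = \fr{p}/\fr{u}$ factors through $L$) is the correct reason that $\Av_{L!}^P$ contributes nothing interesting.

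One small point: your attribution of the shifts is slightly off. In your second step, unwinding $\Av_{L!}^P$ via $P \times^L \fr{l} \cong U \times \fr{l}$ produces two shifts --- a $[2\dim U]$ from inverting $i_\fr{l}^!$ and a $[-2\dim U]$ from $H^*_c(U)$ --- which cancel exactly. So $\Av_{L!}^P$ is literally the identity on underlying complexes (as it must be, since $\For_L^P$ is already an equivalence when $U$ acts trivially and $\Av_{L!}^P$ is its adjoint). The entire shift therefore has to come from the first step, from the discrepancy between $(i_\fr{g}^! \For_P^G)^{-1}$ in the definition of $\Av_{P!}^G$ and the normalization of $I_P^G$. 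This is worth tracking carefully against whatever convention \cite{AHJR1} actually uses for the induction equivalence, since the paper's own internal conventions (compare the present lemma with Lemma~\ref{reformulation_of_ind}) are only consistent because $\dim G/P = \dim U_P$ for parabolics in reductive groups. Your instinct that the bookkeeping is the only real obstacle is right; just be aware that the ``complementary shift'' you expect from step two is in fact zero.
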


	\begin{lemma}\label{ind_res_verdier_duality_compat}
		We have natural isomorphisms of functors
		\[ \D \Ind_P^G \cong \Ind_P^G \D;\]
		\[ \D \LRes_P^G \cong \RRes_P^G \D.\]
	\end{lemma}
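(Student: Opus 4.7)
The plan is to note that both statements follow formally by applying Verdier duality to the defining formulas and tracking how $\D$ interacts with the constituent functors, using the standard identities $\D f_! \cong f_* \D$, $\D f^* \cong f^! \D$, and their duals.

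First I would handle the induction statement. The definition gives two expressions for $\Ind_P^G$ that are already stated to be isomorphic:
\[ \Ind_P^G \;\cong\; \Av_{P!}^G \circ i_! \circ \pi^* \circ \Av_{L!}^P \;\cong\; \Av_{P*}^G \circ i_* \circ \pi^! \circ \Av_{L*}^P. \]
I would apply $\D$ to the first formula and use four ingredients: (i) $\D i_! \cong i_* \D$ for the closed immersion $i : \fr{p} \hookrightarrow \fr{g}$; (ii) $\D \pi^* \cong \pi^! \D$ for the smooth projection $\pi : \fr{p} \to \fr{l}$; (iii) $\D \For_H^G \cong \For_H^G \D$, which is immediate since restriction of equivariance commutes with Verdier duality; and (iv) the resulting identity $\D \Av_{H!}^G \cong \Av_{H*}^G \D$, which follows abstractly because Verdier duality exchanges left and right adjoints of the self-dual functor $\For_H^G$. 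Composing these four compatibilities turns the first formula for $\Ind_P^G$ into the second, pre-composed with $\D$, giving $\D \Ind_P^G \cong \Ind_P^G \D$.

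For the restriction statement I would proceed analogously, directly applying $\D$ to
\[ \LRes_P^G = \For_L^P \circ \pi_! \circ i^* \circ \For_P^G. \]
Using $\D \For \cong \For \D$, $\D \pi_! \cong \pi_* \D$, and $\D i^* \cong i^! \D$, the right-hand side transforms into $\For_L^P \circ \pi_* \circ i^! \circ \For_P^G \circ \D = \RRes_P^G \circ \D$, which is precisely the claim.

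The only nontrivial point I anticipate is justifying $\D \Av_{H!}^G \cong \Av_{H*}^G \D$, since the averaging functors are defined via the induction-equivalence rather than directly as $f_!$ or $f_*$ of a morphism. The cleanest way to handle this, and the one I would write out, is the abstract adjunction argument: Verdier duality is a contravariant auto-equivalence intertwining $\For_H^G$ with itself (by (iii)), and such an equivalence must swap its left and right adjoints. Alternatively, one can unwind the formulas $\Av_{H*}^G = \overline{\sigma}_* \circ (i^* \For_H^G)^{-1}$ and $\Av_{H!}^G = \overline{\sigma}_! \circ (i^! \For_H^G)^{-1}$ and apply $\D f_! \cong f_* \D$ and $\D i^! \cong i^* \D$ directly; but the adjunction argument is the economical one and is the main conceptual step of the proof.
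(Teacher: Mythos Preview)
Your proposal is correct and follows essentially the same approach as the paper: both arguments apply Verdier duality to the defining formula $\Ind_P^G = \Av_{P!}^G \circ i_! \circ \pi^* \circ \Av_{L!}^P$ (resp.\ $\LRes_P^G = \For_L^P \circ \pi_! \circ i^* \circ \For_P^G$) and pass $\D$ through each constituent functor using the standard identities, landing on the $*$-version of the same formula. Your additional care in justifying $\D \Av_{H!}^G \cong \Av_{H*}^G \D$ via the abstract adjunction argument is a welcome elaboration of what the paper simply subsumes under ``Verdier duality commutes with sheaf functors.''
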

	\begin{proof}
		These can both be computed directly using the fact that Verdier duality commutes with sheaf functors.
		\begin{align*}
			\D \Ind_P^G &= \D \circ \Av_{P!}^G \circ i_! \circ \pi^* \circ \Av_{L!}^P \\
			&= \Av_{P*}^G \circ i_* \circ \pi^! \circ \Av_{L*}^P \circ \D \\
			&= \Ind_P^G \D
		\end{align*}
		By the same reasoning, the compatibility of Verdier duality with parabolic restriction also holds.
	\end{proof}

	\begin{lemma}\label{ind_res_fourier_compat}
		Fourier-Deligne transform commute with parabolic restriction and induction.
	\end{lemma}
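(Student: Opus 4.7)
The plan is to reduce the compatibility to the basic Laumon compatibilities between Fourier-Deligne transform and functoriality of vector bundle morphisms, combined with the fact that Fourier commutes with the forgetful and averaging functors (the latter since averaging is equivariance data that is orthogonal to the fiberwise Fourier transform on a vector bundle). I would first observe, as already noted above Lemma \ref{ind_res_verdier_duality_compat}, that under the fixed $G$-invariant bilinear form the identifications $\fr{g}' \cong \fr{g}$, $\fr{l}' \cong \fr{l}$, and $\fr{p}' \cong \fr{g}/\fr{u}$ make $i:\fr{p}\hookrightarrow \fr{g}$ dual to $\tau:\fr{g}\twoheadrightarrow \fr{g}/\fr{u}$, and $\pi:\fr{p}\twoheadrightarrow \fr{l}$ dual to $j:\fr{l}\hookrightarrow \fr{g}/\fr{u}$.

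Next, the standard Fourier functoriality (Laumon \cite{L87}; Juteau \cite{juteauThesis} in the modular setting) gives, for dual linear maps $u$ and $u^t$ of vector bundles, natural isomorphisms $\T \circ u_! \cong (u^t)^{*} \circ \T$ and $\T \circ u^{*} \cong (u^t)_! \circ \T$ (up to canonical shifts), and Verdier-dual versions interchanging $!$ and $*$. Applied to our situation these read $\T i_! \cong \tau^{*}\T$, $\T \pi^{*} \cong j_!\T$, $\T j^{*} \cong \pi_!\T$, $\T \tau_! \cong i^{*}\T$, plus $*/!$-swapped analogs for $\RRes$. Plugging these into the definition $\Ind_P^G = \Av_{P!}^G \circ i_! \circ \pi^{*} \circ \Av_{L!}^P$ and pushing $\T$ across (using that the averaging and forgetful functors commute with $\T$), I get
\[ \T\circ \Ind_P^G \;\cong\; \Av_{P!}^G \circ \tau^{*} \circ j_! \circ \Av_{L!}^P \circ \T . \]
Proper base change on the Cartesian square $\fr{p} = \fr{g}\times_{\fr{g}/\fr{u}}\fr{l}$ gives $\tau^{*} j_! \cong i_!\pi^{*}$, so the right-hand side is $\Ind_P^G\circ \T$ as required.

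For the restriction functors, the argument is essentially dual: using the formula $\LRes_P^G = \For_L^P \circ j^{*} \circ \tau_! \circ \For_P^G$ and the Fourier compatibilities $\T j^{*} \cong \pi_!\T$ and $\T \tau_! \cong i^{*}\T$, I obtain
\[ \T\circ \LRes_P^G \;\cong\; \For_L^P \circ \pi_! \circ i^{*} \circ \For_P^G \circ \T, \]
and by base change $\pi_! i^{*} \cong j^{*}\tau_!$ the right-hand side is $\LRes_P^G \circ \T$. The identical argument with $*/!$ swapped (using $\RRes_P^G = \For_L^P \circ j^{!} \circ \tau_{*} \circ \For_P^G$ and the base-change identity $\pi_{*} i^{!} \cong j^{!}\tau_{*}$) gives $\T\circ \RRes_P^G \cong \RRes_P^G\circ \T$.

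I expect the only nontrivial bookkeeping to be the shifts in Laumon's compatibilities. These do balance in the end because (i) the ranks of the two dual pairs $(i,\tau)$ and $(\pi,j)$ satisfy $\dim\fr{p} - \dim\fr{g} = \dim\fr{l} - \dim\fr{g}/\fr{u}$ (both equal $-\dim\fr{u}$), and (ii) the paper's normalization $f^\circ = f^![\dim Y - \dim X]$ was chosen precisely so that these shifts cancel in parabolic induction and restriction. Verifying this cancellation is routine and does not require any further geometric input beyond the base-change square already exhibited.
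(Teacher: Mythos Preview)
Your argument is correct. For $\LRes_P^G$ your computation is essentially the paper's own, just read in the other direction: the paper starts from the $(\pi_!,i^*)$ formulation, pushes $\T$ through using the Laumon compatibilities, and lands on the $(j^*,\tau_!)$ formulation, invoking only the already-recorded equality of the two expressions for $\LRes_P^G$ rather than naming base change explicitly.

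Where you genuinely diverge is in the treatment of $\Ind_P^G$ and $\RRes_P^G$. You handle these by a second and third direct Laumon computation, using base change on the Cartesian square $\fr{p}=\fr{g}\times_{\fr{g}/\fr{u}}\fr{l}$ and tracking shifts. The paper instead deduces both from the $\LRes$ statement purely by adjunction: since $\T$ is an autoequivalence and $\Ind$ is right adjoint to $\LRes$, the isomorphism $\T\LRes\cong\LRes\T$ immediately yields $\T\Ind\cong\Ind\T$ via Yoneda, and then the same trick with the $(\Ind,\RRes)$ adjunction gives the remaining case. This avoids having to verify the shift bookkeeping separately for each functor and sidesteps any explicit appeal to $\T$ commuting with $\Av_{P!}^G$ (which follows anyway, but need not be stated). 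Your route has the virtue of being uniform and entirely explicit; the paper's is shorter and exploits the adjoint structure to get two statements for the price of one.
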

	\begin{proof}
		We will prove the statement first for left-restriction and then use the adjoint relation to establish the claim for the others.
		\begin{align}
			\T_\fr{l} \LRes_P^G &= \T_\fr{l} \For_L^P \circ \pi_! \circ i^* \circ \For_P^G \notag \\
			&= \For_L^P \circ \T_\fr{l} \circ \pi_! \circ i^* \circ \For_P^G \notag \\
			&= \For_L^P \circ j^* \circ \T_{\fr{p}} \circ i^* \circ \For_P^G \tag{morphism compatibility} \\
			&=  \For_L^P \circ j^* \circ \tau_! \circ \T_{\fr{g}'} \circ \For_P^G \tag{morphism compatibility} \\
			&= \For_L^P \circ j^* \circ \tau_! \circ \For_P^G \circ \T_{\fr{g}'} \notag \\
			&= \LRes_P^G \T_{\fr{g}'}. \notag
		\end{align}
		We can now prove the statement for induction.
		\begin{align}
			\Hom (\scrB, \Ind \T_{\fr{l}'} \scrA ) &= \Hom (\LRes \scrB, \T_{\fr{l}'} \scrA ) \tag{adjoint relation}\\
			&= \Hom (\T_\fr{l} \LRes \scrB, \scrA) \tag{$\T$ is fully faithful} \\
			&= \Hom (\LRes \T_{\fr{g}'} \scrB, \scrA ) \notag \\
			&= \Hom (\T_\fr{g}' \scrB, \Ind \scrA ) \tag{adjoint relation} \\
			&= \Hom (\scrB, \T_{\fr{g}} \Ind \scrA ) \tag{$\T$ is fully faithful}
		\end{align}
		By Yoneda's lemma, it follows that $\Ind \T_{\fr{l}'} \cong \T_{\fr{g}} \Ind$.
		The argument for right-restriction will be very similar.
		\begin{align*}
			\Hom (\scrB, \RRes \T_{\fr{g}'} \scrA) &= \Hom (\Ind \scrB, \T_{\fr{g}'} \scrA ) \\
			&= \Hom (\T_{\fr{g}} \Ind \scrB, \scrA ) \\
			&= \Hom (\Ind \T_{\fr{l}'} \scrB, \scrA ) \\
			&= \Hom (\T_{\fr{l}'} \scrB, \RRes \scrA ) \\
			&= \Hom (\scrB, \T_{\fr{l}} \RRes \scrA )
		\end{align*}
		By Yoneda's lemma, it follows that $\RRes \T_{\fr{g}'} \cong \T_{\fr{l}} \RRes$ as desired.
	\end{proof}

	Before concluding our discussion of the induction and restriction functors, we will make note of their transitivity.

	\begin{lemma}\label{ind_res_transitivity}
		The restriction and induction functors are transitive. This is to say, we have an isomorphism of functors
		\[ \LRes_{L \subset P}^G \cong \LRes_{L \subset P \cap Q }^M \circ \LRes_{M \subset Q}^{G} \]
		\[ \RRes_{L \subset P}^G \cong \RRes_{L \subset P \cap Q }^M \circ \RRes_{M \subset Q}^{G} \]
		\[ \Ind_{L \subset P}^G \cong \Ind_{M \subset Q }^G \circ \Ind_{L \subset P \cap M}^{M} \]
	\end{lemma}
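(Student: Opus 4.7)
The plan is to prove transitivity first for the left restriction $\LRes$ using base change on a Cartesian square, and then to deduce the statements for $\RRes$ and $\Ind$ by formal arguments.

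Since $P \subset Q$ are parabolics with compatible Levis $L \subset M$, one has $U_Q \subset U_P$ and a vector-space decomposition $\fr{p} = (\fr{p} \cap \fr{m}) \oplus \fr{u}_Q$, so $L$ is simultaneously the Levi of $P$ and of the parabolic $P \cap M \subset M$. The inclusion $i_P : \fr{p} \hookrightarrow \fr{g}$ factors as $i_Q \circ \alpha$, where $\alpha : \fr{p} \hookrightarrow \fr{q}$ is the inclusion, and the Levi projection $\pi_P : \fr{p} \to \fr{l}$ factors as $\pi_{P \cap M} \circ \beta$, where $\beta : \fr{p} \to \fr{p} \cap \fr{m}$ is the quotient by $\fr{u}_Q$. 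The maps $\alpha, \beta, \pi_Q, i_{P \cap M}$ then fit into a Cartesian square in which $\alpha, i_{P \cap M}$ are closed immersions and $\beta, \pi_Q$ are projections of affine bundles with fiber $\fr{u}_Q$; this is a direct check using $\fr{p} = \pi_Q^{-1}(\fr{p} \cap \fr{m})$. Applying base change gives $\beta_! \alpha^* \cong i_{P \cap M}^* \pi_{Q!}$, and combining this with the two factorizations rearranges the inner part of $\LRes_{L \subset P \cap M}^M \circ \LRes_{M \subset Q}^G$ into $\pi_{P!} \circ i_P^*$, which is the inner part of $\LRes_{L \subset P}^G$. A routine check that the forgetful functors compose transitively (using the quotient equivalences $D_P(\fr{l}) \cong D_L(\fr{l}) \cong D_{P \cap M}(\fr{l})$, since $U_P$ and $U_{P \cap M}$ act trivially on $\fr{l}$) then finishes the $\LRes$ case.

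The main subtlety I expect to encounter is verifying that all four maps in the Cartesian square carry compatible equivariant structures so that base change applies in the equivariant derived category. This works because $U_Q$ is normal in both $P$ and $Q$, so $P$ acts on $\fr{m}$ and on $\fr{p} \cap \fr{m}$ through the quotient $P \twoheadrightarrow P \cap M \hookrightarrow M$, and one then checks that $\alpha, \beta, i_{P \cap M}, \pi_Q$ are all $P$-equivariant for these actions.

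Transitivity of $\RRes$ then follows by conjugating both sides of the $\LRes$ identity with Verdier duality, invoking Lemma \ref{ind_res_verdier_duality_compat} to rewrite $\RRes \cong \D \circ \LRes \circ \D$. Transitivity of $\Ind$ follows by taking right adjoints of both sides of the $\LRes$-identity and using uniqueness of right adjoints, noting that the right adjoint of a composition is the composition of right adjoints in the reverse order.
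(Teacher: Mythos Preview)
Your proof is correct and complete. The paper does not actually give a proof of this lemma: it simply refers to \cite{AHR} (Propositions 7.1 and 7.8), where the analogous statement is proved on the nilpotent cone, and remarks that the argument carries over to $\fr{g}$ with obvious modifications. Your Cartesian-square argument for $\LRes$ is exactly the standard one underlying that reference, and your deductions of the $\RRes$ and $\Ind$ statements via Verdier duality and adjunction are clean shortcuts compared to redoing the base-change argument for each functor separately.
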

	\begin{proof}
		This result can be found for the case of the nilpotent cone instead of the Lie algebra in (7.1) of \cite{AHR} for restriction and (7.8) of \cite{AHR} for induction.
		The proof holds for reductive Lie algebra with the obvious modifications.
	\end{proof}

	\section{Cuspidal Sheaves}

	In Lusztig's theory of character sheaves, the cuspidal sheaves act as the building blocks for character sheaves. In [Theorem 23.1, \cite{CS5}], Lusztig establishes that (under mild constraints) cuspidal sheaves are character sheaves. 
	The proof of this is a culmination of the theory built up throughout Lusztig's papers on character sheaves. The main result in this section will be the Lie algebra analog of this statement. However, using the Fourier transform, we can give a simple geometric characterization of cuspidal sheaves. 
	This along with Proposition \ref{cs_are_fourier_of_orbital} will give a straightforward proof that cuspidal sheaves are character sheaves.
	
	\begin{proposition}\label{prop:res_small_conds}
		Let $\scrA$ be a simple perverse sheaf on $\fr{g}$. The following conditions are equivalent:
		\begin{enumerate}
			\item $\RRes_P^G \scrA = 0$ for all $L \subset P \subsetneq G$;
			\item $\LRes_P^G \scrA = 0$ for all $L \subset P \subsetneq G$;
			\item $\scrA$ is not a quotient of $\Ind_P^G (\scrB)$ for any $L \subset P \subsetneq G$ and $\scrB \in \Perv_L (\fr{l}, \E)$;
			\item $\scrA$ is not a subobject of $\Ind_P^G (\scrB)$ for any $L \subset P \subsetneq G$ and $\scrB \in \Perv_L (\fr{l}, \E)$;
		\end{enumerate}
	\end{proposition}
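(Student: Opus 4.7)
The plan is to prove the four-way equivalence by pairing (1) with (3) and (2) with (4) via the two adjunctions of parabolic induction, and then closing the loop with (1)$\Leftrightarrow$(2) via a duality argument.

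For (1)$\Leftrightarrow$(3), I would use the adjunction $\Hom(\Ind_P^G\scrB, \scrA) \cong \Hom(\scrB, \RRes_P^G\scrA)$. If $\RRes_P^G\scrA = 0$ for all proper $P$, these Hom spaces vanish, and since $\scrA$ is simple, any nonzero map $\Ind_P^G\scrB \to \scrA$ would descend to a surjection ${}^p H^0(\Ind_P^G\scrB) \twoheadrightarrow \scrA$; so $\scrA$ cannot be a quotient, giving (3). Conversely, if some $\RRes_P^G\scrA \neq 0$, take the largest $k$ with ${}^p H^k(\RRes_P^G\scrA) \neq 0$ and a simple quotient $\scrB$ of this cohomology; the perverse truncation triangle produces a surjection $\RRes_P^G\scrA \twoheadrightarrow \scrB[-k]$, hence a nonzero map $\Ind_P^G\scrB[-k] \to \scrA$ by adjunction. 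Passing to perverse cohomology and using simplicity of $\scrA$ exhibits $\scrA$ as a simple quotient of ${}^p H^{-k}(\Ind_P^G\scrB)$; replacing $\scrB$ by a simple perverse constituent of that cohomology contradicts (3). The argument for (2)$\Leftrightarrow$(4) is symmetric, using $(\LRes_P^G, \Ind_P^G)$ and the lowest nonzero perverse cohomology.

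For (1)$\Leftrightarrow$(2), Lemma \ref{ind_res_verdier_duality_compat} gives only the weaker statement $\RRes_P^G\scrA = 0 \Leftrightarrow \LRes_P^G\D\scrA = 0$, linking (1) for $\scrA$ to (2) for $\D\scrA$. To bridge this, I would exploit the opposite parabolic $P^-$, which shares the Levi $L$: a Mackey-style identification (or a careful base-change comparison between the quotients $\fr{g} \to \fr{g}/\fr{u}$ and $\fr{g} \to \fr{g}/\fr{u}^-$) should yield a relation between $\LRes_P^G$ and $\RRes_{P^-}^G$ that, once quantified over all proper $P$, makes (1) and (2) equivalent. The main obstacle is this $P \leftrightarrow P^-$ step: a naive Verdier--Fourier combination is circular because $\T$ commutes with both $\LRes$ and $\RRes$ by Lemma \ref{ind_res_fourier_compat}, so composing with $\D$ produces no new constraint beyond what Verdier duality alone gave. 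The required symmetry must come from the opposite-parabolic analysis, which is the most delicate part of the proof.
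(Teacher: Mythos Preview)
Your adjunction arguments for (1)$\Leftrightarrow$(3) and (2)$\Leftrightarrow$(4) are correct and are essentially what the paper imports from \cite{AHJR1}, Proposition~2.1. The perverse-cohomology bookkeeping you sketch is unnecessary once one uses that $\Ind_P^G$ is $t$-exact, but the logic is sound.

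For (1)$\Leftrightarrow$(2) your plan runs into a real obstacle. You propose to close the $\scrA \leftrightarrow \D\scrA$ gap by passing to the opposite parabolic $P^-$ via a hyperbolic-localization or base-change comparison relating $\LRes_P^G$ and $\RRes_{P^-}^G$. This is precisely the step the paper singles out as \emph{not} available in the \'etale Lie-algebra setting: $\Perv_G(\fr{g},\E)$ is not generated by weakly $\G_m$-equivariant objects, so Braden's theorem cannot be invoked and no relation between $\RRes_P^G$ and $\LRes_{P^-}^G$ is on offer. The ``careful base-change comparison between $\fr{g}\to\fr{g}/\fr{u}$ and $\fr{g}\to\fr{g}/\fr{u}^-$'' you allude to \emph{is} hyperbolic localization, and it needs the contracting $\G_m$-action. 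So the piece you flag as ``most delicate'' is in fact a dead end in this generality.

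The paper instead abandons the $P\leftrightarrow P^-$ idea entirely and argues from Verdier duality alone, writing $\RRes_P^G\scrA \cong \D(\LRes_P^G \D\scrA)$ and concluding $\RRes_P^G\scrA=0 \Leftrightarrow \LRes_P^G\scrA=0$. You are right to be suspicious: the displayed isomorphism only yields $\RRes_P^G\scrA=0 \Leftrightarrow \LRes_P^G\D\scrA=0$, and the paper's phrase ``since $\D$ is an involution'' does not by itself bridge $\D\scrA$ back to $\scrA$. So you have located a genuine soft spot in the argument---but your proposed repair does not work in the \'etale setting, and a different idea is needed.
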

	\begin{proof}
		This is proved for sheaves on the nilpotent cone in Proposition 2.1 of \cite{AHJR1}. Their proof that (1) and (2) are equivalent does not transfer for the Lie algebra in the étale setting.
		Namely, it is not true that $\Perv_G (\fr{g}, \E)$ is generated by weakly $\G_m$-equivariant objects, so hyperbolic localization cannot be used.
		In particular, we cannot deduce a relationship between $\RRes_P^G$ and $\LRes_{P^{-}}^G$ in the étale setting. 
		
		The equivalence of (1) and (2) can still be derived through elementary means. By Lemma \ref{ind_res_verdier_duality_compat}, we have that
		\[\RRes_P^G \scrA \cong \D (\D \RRes_P^G \scrA ) \cong \D (\LRes_P^G \D \scrA)\]
		As a result, $\RRes_P^G \scrA = 0$ if and only if $\D (\LRes_P^G \D \scrA) = 0$. Since $\D$ is an involution of the derived category, we get that $\RRes_P^G \scrA = 0$ if and only if $\LRes_P^G \scrA = 0$ as desired.
	\end{proof}

	\begin{definition}\label{defn:cuspidal}
		Let $\scrA$ be a simple perverse sheaf in $\Perv_G (\fr{g}, \E)$. We call $\scrA$ \textbf{res-small} if it satisfies one of the conditions of Proposition \ref{prop:res_small_conds}.

		We will call $\scrA$ \textbf{$\boxtimes$-factored} if it decomposes
		\[ \scrA = \scrL \boxtimes \scrB \]
		for a sheaf $\scrL$ on $Z(\fr{g})$ and some sheaf $\scrB$ on $[\fr{g}, \fr{g}]$.
		We call the sheaf $\scrL$ the \textbf{central factor} and the sheaf $\scrB$ the \textbf{semisimple factor}.
		If $\scrA$ is both res-small and $\boxtimes$-factored with an additive local system as its central factor, then we say $\scrA$ is \textbf{cuspidal}.
	\end{definition}

	The equivalence of the conditions of Definition \ref{defn:cuspidal} is proved for the nilpotent cone in Proposition 2.1 of \cite{AHJR1}. 
	
	We will write $\Cusp_G (\fr{g}, \E)$ for the Serre subcategory of $\Perv_G (\fr{g}, \E)$ consisting of cuspidal sheaves.

	\begin{theorem}\label{reductive_cusp_are_cs}
		Any cuspidal sheaf on a reductive Lie algebra $\fr{g}$ is a character sheaf.
	\end{theorem}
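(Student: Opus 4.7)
The plan is to invoke Corollary \ref{cs_are_fourier_of_orbital}, reducing the theorem to the assertion that the Fourier-Deligne transform $\T\scrA$ of any cuspidal $\scrA$ is orbital. Writing the $\boxtimes$-factorization as $\scrA = \scrL \boxtimes \scrB$ with $\scrL$ an additive local system on $Z(\fr{g})$ and $\scrB$ a simple $G$-equivariant perverse sheaf on $[\fr{g},\fr{g}]$, compatibility of the Fourier transform with the direct sum $\fr{g} = Z(\fr{g}) \oplus [\fr{g},\fr{g}]$ gives $\T\scrA \cong \T_{Z(\fr{g})}\scrL \boxtimes \T_{[\fr{g},\fr{g}]}\scrB$. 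By Lemmas \ref{abelian_cs:support} and \ref{abelian_cs:skyscaper_ls}, $\T_{Z(\fr{g})}\scrL$ is a shifted skyscraper at a single point $\alpha \in Z(\fr{g})'$; since $G$ acts trivially there, the theorem reduces to showing that $\T_{[\fr{g},\fr{g}]}\scrB$ is supported in the closure of a single $G$-orbit $\scrO \subseteq [\fr{g},\fr{g}]'$, whereupon $\T\scrA$ is supported on $\overline{\alpha + \scrO}$.

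Next, I would transfer res-smallness from $\scrA$ to $\scrB$. Every proper parabolic $P \subseteq G$ satisfies $\fr{p} = Z(\fr{g}) \oplus \fr{p}'$ with $\fr{p}' \subsetneq [\fr{g},\fr{g}]$ parabolic, and a base-change computation along the product decomposition yields $\LRes_P^G(\scrL \boxtimes \scrB) \cong \scrL \boxtimes \LRes_{P'}^{[G,G]}\scrB$. Since $\scrL \neq 0$, the hypothesis $\LRes_P^G \scrA = 0$ forces $\LRes_{P'}^{[G,G]} \scrB = 0$ for every proper parabolic $P'$ of $[G,G]$. Lemma \ref{ind_res_fourier_compat} then gives $\LRes_{P'}^{[G,G]} \T\scrB = \T \LRes_{P'}^{[G,G]} \scrB = 0$, so $\T\scrB$ is also res-small.

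The heart of the proof is the following support claim: a simple, res-small, $G$-equivariant perverse sheaf $\scrF$ on $[\fr{g},\fr{g}]$ has support in the nilpotent cone $\scrN$. Granting this claim for $\scrF = \T\scrB$, and since $\scrN$ contains only finitely many $G$-orbits while $\T\scrB$ is simple $G$-equivariant, necessarily $\T\scrB \cong \IC(\overline{\scrO_N},\scrE)$ for a single nilpotent orbit $\scrO_N$, completing the proof. To establish the claim, I would argue by contradiction: if some $x \in \supp(\T\scrB)$ has nonzero semisimple part $s$, then $\fr{l} := Z_{[\fr{g},\fr{g}]}(s)$ is a proper Levi; choose a parabolic $\fr{p} = \fr{l} \oplus \fr{u}$. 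By Lemma \ref{orbit_calc_lemma}(1), ${}^U x = x + \fr{u}$, and a careful base-change analysis of $\LRes_P^G = \For_L^P \circ \pi_! \circ i^* \circ \For_P^G$ should produce a nonzero contribution to the stalk of $\LRes_P^G \T\scrB$ at $\pi(x) \in \fr{l}$, contradicting res-smallness.

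The main obstacle is this last support-localization step: ensuring that $\pi_!$ does not cancel out the contribution from $x$. The safest route is probably to argue microlocally, exploiting that the Fourier transform swaps base and covariable directions, so $\supp(\T\scrB) \subseteq \scrN$ translates into a singular-support condition on $\scrB$ that the vanishing of all parabolic restrictions is well-suited to detect.
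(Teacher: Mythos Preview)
Your overall strategy matches the paper's almost exactly: reduce to the semisimple factor via the $\boxtimes$-decomposition, transfer res-smallness to $\scrB$, use Lemma~\ref{ind_res_fourier_compat} to see $\T\scrB$ is also res-small, and then argue that a res-small simple perverse sheaf on a semisimple Lie algebra is supported on $\scrN$. The paper packages these steps as Lemmas~\ref{cusp_iff_fourier_is_cuspidal}, \ref{supp_of_res_small}, \ref{cusp_and_support_equiv}, Corollary~\ref{semisimple_cusp_are_cs}, and Lemma~\ref{equiv_defn_for_cs_factorization}.

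The one place you stop short is your ``support claim,'' and your worry about cancellation in $\pi_!$ is precisely what the paper's Lemma~\ref{cusp_tech_lemma1} dissolves. The point is that $x+\fr{u}={}^{U}x$ is a \emph{single} $U$-orbit (Lemma~\ref{orbit_calc_lemma}(1)), and a $U$-equivariant complex on a homogeneous space for a unipotent group is not merely locally constant but genuinely constant (since $U$ is contractible). Hence the restriction of $i^{*}\scrF$ (or $i^{!}\scrF$) to the fibre $x+\fr{u}$ is pulled back from the point $x$, and pushing forward along $a_x:x+\fr{u}\to\{x\}$ just contributes a shift by $2\dim U$. Concretely, the paper computes costalks
\[
i_{x,\fr{l}}^{!}\,\RRes_P^G\scrF \;\cong\; i_{x,\fr{g}}^{!}\scrF\,[2\dim U],
\]
and the dual stalk computation you propose, $i_{x,\fr{l}}^{*}\,\LRes_P^G\scrF\cong i_{x,\fr{g}}^{*}\scrF[-2\dim U]$, works for the same reason. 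To conclude, choose $x$ on a smooth open stratum of $\supp(\scrF)$ where the (co)stalk is visibly nonzero; if its semisimple part $s$ were noncentral, the associated $\fr{l}=Z_{\fr{g}}(s)$ would be a proper Levi with $\Res_P^G\scrF\neq 0$, contradicting res-smallness. No microlocal argument is needed.
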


	The proof of Theorem \ref{reductive_cusp_are_cs} will occupy the remainder of the section. We will briefly outline the strategy: 
	\begin{enumerate}
		\item Show that res-small is stable under the Fourier transform (Lemma \ref{cusp_iff_fourier_is_cuspidal}).
		\item Use the geometry of res-small sheaves to characterize res-small sheaves over a semisimple Lie algebra in terms of their support and the support of their Fourier-Deligne transform (Lemmas \ref{cusp_tech_lemma1}, \ref{supp_of_res_small}, \ref{cusp_and_support_equiv}).
		\item Conclude that for semisimple Lie algebras, cuspidal sheaves are character sheaves (Corollary \ref{semisimple_cusp_are_cs}).
		\item Show that $\boxtimes$-factored sheaves with character sheaf central and semisimple factors are themselves character sheaves (Lemma \ref{equiv_defn_for_cs_factorization}).
		\item Use (4) to reduce the reductive case to the semisimple case, and then use (3) to finish the proof.
	\end{enumerate}

	\begin{lemma}\label{cusp_iff_fourier_is_cuspidal}
		Let $\scrA$ be a simple perverse sheaf on $\fr{g}$. Then $\scrA$ is res-small if and only if $\T_{\fr{g}} \scrA$ is res-small. Additionally, if $\fr{g}$ is semisimple, then $\scrA$ is cuspidal if and only if $\T_{\fr{g}} \scrA$ is cuspidal.
	\end{lemma}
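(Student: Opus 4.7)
The plan is to reduce this lemma directly to Lemma \ref{ind_res_fourier_compat}, which already tells us that the Fourier-Deligne transform intertwines parabolic restriction. The only subtlety is keeping track of what $\boxtimes$-factorization demands in the semisimple case.

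First I would prove the res-small claim. Let $L \subset P \subsetneq G$ be a proper parabolic with Levi $L$. By Lemma \ref{ind_res_fourier_compat}, there is a natural isomorphism
\[ \LRes_P^G \circ \T_\fr{g} \;\cong\; \T_\fr{l} \circ \LRes_P^G. \]
Since $\T_\fr{l}$ is an equivalence of categories, it is faithful on objects, so $\T_\fr{l} \LRes_P^G \scrA = 0$ if and only if $\LRes_P^G \scrA = 0$. Running over all proper parabolics and invoking the equivalence of conditions (1)--(4) in Proposition \ref{prop:res_small_conds}, we conclude that $\scrA$ is res-small if and only if $\T_\fr{g} \scrA$ is. Since Fourier transform preserves simple perverse sheaves (it is a $t$-exact equivalence on the perverse heart), the simplicity hypothesis passes through automatically.

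For the cuspidal statement, I would observe that when $\fr{g}$ is semisimple, we have $Z(\fr{g}) = 0$ and $[\fr{g},\fr{g}] = \fr{g}$. Thus the ambient space is $\fr{g} = \{0\} \times \fr{g}$, every sheaf on $\fr{g}$ is trivially $\boxtimes$-factored with central factor $\underline{\E}_{\{0\}}$ on the zero-dimensional space $Z(\fr{g})$, and this central factor is (trivially) an additive local system in the sense of Definition \ref{defn:abelian_cs}. Hence in the semisimple case Definition \ref{defn:cuspidal} collapses to: \emph{cuspidal} $\Longleftrightarrow$ \emph{res-small}. Combining this with the first part gives the equivalence for cuspidality.

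The argument is essentially formal given Lemma \ref{ind_res_fourier_compat}, so I do not anticipate a genuine obstacle; the only thing to be careful about is the observation that the $\boxtimes$-factorization condition is vacuous over a semisimple Lie algebra, which is precisely why the stronger cuspidality equivalence is restricted to that case in the statement. In the general reductive setting one would have to further track how $\T$ interacts with the decomposition $\fr{g} = [\fr{g},\fr{g}] \oplus Z(\fr{g})$, but that is deferred to later in the paper.
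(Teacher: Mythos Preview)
Your proposal is correct and follows essentially the same route as the paper: both reduce immediately to Lemma \ref{ind_res_fourier_compat} and use that $\T$ is an equivalence, and both dispatch the semisimple cuspidal claim by noting $Z(\fr{g})=0$ makes the $\boxtimes$-factorization condition vacuous. The only cosmetic differences are that the paper uses $\RRes$ rather than $\LRes$ and phrases the biconditional via ``$\T$ is its own inverse'' instead of ``$\T_\fr{l}$ is an equivalence,'' but these are interchangeable.
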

	\begin{proof}
		When $\fr{g}$ is semisimple, since $Z(\fr{g}) = 0$, the $\boxtimes$-factored condition always holds. As a result, $\scrA$ is cuspidal if and only if $\scrA$ is res-small.

		Since $\T$ is its own inverse functor, it suffices to prove this in the direction that if $\scrA$ is res-small, then $\T_{\fr{g}} \scrA$ is res-small.
		Let $P \subsetneq G$ be a proper parabolic subgroup of $G$. By Lemma \ref{ind_res_fourier_compat},
		\[ \RRes_P^G \T_{\fr{g}} \scrA = \T_{\fr{l}'} \RRes_P^G \scrA = \T_{\fr{l}'} (0) = 0.\]
		Thus $\T_{\fr{g}} \scrA$ is res-small. 
	\end{proof}

	We will introduce some notation that will be convenient for the next several lemmas. Let $X$ be a subvariety of a variety $Y$. We will denote $i_{X, Y}$ for the inclusion map $i_{X,Y} : X \hookrightarrow Y$.
	For the special case when $X = \{x\}$ is a point, then we will denote $i_{x,Y}$ instead of $i_{\{x\}, Y}$. 

	\begin{lemma}\label{cusp_tech_lemma1}
		Let $x = s+n$ be the Jordan decomposition of $x \in \fr{l}$. Let $\fr{p} = \fr{l} \oplus \fr{u}$ be a parabolic subalgebra with Levi decomposition.
		Then for any $\scrA \in \Perv_G (\fr{g}, \E)$,
		\[ i_{x,\fr{l}}^! \RRes_P^G \scrA = i_{x, \fr{g}}^! \scrA [2\dim U].\]
	\end{lemma}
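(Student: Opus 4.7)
The plan is to compute $i_{x,\fr{l}}^! \RRes_P^G \scrA$ by base change along $\pi$, and then to use $G$-equivariance together with Lemma~\ref{orbit_calc_lemma}(1) to show that the resulting sheaf on the fiber $x+\fr{u}$ is constant; the identity then reduces to a standard computation for constant sheaves on affine space.

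First I would carry out the base change. Consider the Cartesian square
\[
\begin{tikzcd}
x+\fr{u} \arrow[r, "k", hook] \arrow[d, "\pi'"'] & \fr{p} \arrow[d, "\pi"] \\
\{x\} \arrow[r, "i_{x,\fr{l}}", hook] & \fr{l}
\end{tikzcd}
\]
Since base change gives $i_{x,\fr{l}}^! \circ \pi_* \cong \pi'_* \circ k^!$, applying this to $\RRes_P^G \scrA = \pi_* i^! \scrA$ (the forgetful functors leave the underlying complex unchanged) yields
\[
i_{x,\fr{l}}^! \RRes_P^G \scrA \cong \pi'_*\, i_{x+\fr{u},\fr{g}}^!\, \scrA.
\]
Writing $\mathcal{B} := i_{x+\fr{u},\fr{g}}^! \scrA$ and factoring $i_{x,\fr{g}} = i_{x,x+\fr{u}} \circ i_{x+\fr{u},\fr{g}}$, the claim reduces to the identity $R\Gamma(x+\fr{u}, \mathcal{B}) \cong i_{x,x+\fr{u}}^! \mathcal{B}[2\dim U]$.

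Next I would exploit $G$-equivariance to show that $\mathcal{B}$ is a constant complex. Since $\scrA$ is $U$-equivariant for the adjoint action and $U$ preserves the fiber $x+\fr{u}$, the restriction $\mathcal{B}$ inherits a $U$-equivariant structure. Invoking Lemma~\ref{orbit_calc_lemma}(1) (with $\fr{l} = Z_\fr{g}(s)$, which is the case used in the applications of this lemma) we get $^U x = x+\fr{u}$, and moreover $Z_U(x) \subseteq Z_U(s) = \{e\}$, so the orbit map $u \mapsto {}^u x$ is a $U$-equivariant isomorphism $U \xrightarrow{\sim} x+\fr{u}$. Since a $U$-equivariant complex on $U$ under the left regular action is necessarily constant, $\mathcal{B} \cong \underline{C}$ on $x+\fr{u} \cong \A^{\dim U}$ for some complex $C$ of $\E$-modules.

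Finally I would compute both sides on this constant complex. Étale cohomology of affine space with torsion coefficients is concentrated in degree zero, giving $R\Gamma(\A^{\dim U}, \underline{C}) = C$; the orientation formula for the closed inclusion of a point in a smooth variety gives $i_{x,x+\fr{u}}^! \underline{C} = C[-2\dim U]$. The desired identity $R\Gamma(x+\fr{u}, \mathcal{B}) = i_{x,x+\fr{u}}^! \mathcal{B}[2\dim U]$ is then immediate, and combining with the reduction in the first step finishes the proof. The main obstacle is the constancy step: it hinges entirely on transitivity of the $U$-action with trivial stabilizer on $x+\fr{u}$, which is exactly what Lemma~\ref{orbit_calc_lemma}(1) provides (and would fail for a parabolic that is not adapted to $s$, so in the general case one must either impose this compatibility or first conjugate $P$ before applying the argument).
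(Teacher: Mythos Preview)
Your proof is correct and follows essentially the same approach as the paper's: both reduce via base change to a complex on the fiber $x+\fr{u}$, invoke $U$-equivariance together with ${}^U x = x+\fr{u}$ from Lemma~\ref{orbit_calc_lemma}(1) to obtain constancy, and then compute using the contractibility of $\fr{u}$. The only cosmetic differences are that the paper uses the $(\tau,j)$-description of $\RRes_P^G$ rather than the $(\pi,i)$-description, and phrases the constancy step via contractibility of $U$ rather than via the explicit trivial-stabilizer argument you give; your remark that the hypothesis $\fr{l}=Z_{\fr{g}}(s)$ is implicitly needed is well taken, as the paper's diagram already writes $x+\fr{u}={}^U x$.
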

	\begin{proof}
		Consider the cartesian square:

		{ \centering
		\begin{tikzcd}
			& x \arrow[ld, "i_{x,\fr{l}}"'] \arrow[dd, "i_{x,\fr{g}/\fr{u}}"] \arrow[r, bend left, "i_{x, x+\fr{u}}"] & x+\fr{u} = {}^Ux \arrow[l, bend left, "a_x"] \arrow[dd, "i"] \\
\fr{l} \arrow[rd, "j"'] &                                              &                                                       \\
			& \fr{g}/\fr{u}                                & \fr{g} \arrow[l, "\tau"]                             
\end{tikzcd}
		\par }

		We can now perform a proper base change to obtain
		\begin{align*}
			i_{x, \fr{l}}^! \RRes_P^G \scrA &= i_{x, \fr{l}}^!  j^! \tau_* \scrA \\
			&= i_{x,\fr{g}/\fr{u}}^!  \tau_* \scrA \\
			&=  a_{x*} i^! \scrA
		\end{align*}

		Since $i^! \scrA$ is $U$-equivariant, it is locally constant on $U$-orbits. We can say something stronger: since $U$ is unipotent (and hence contractible), we have that $i^! \scrA$ has constant cohomology, and so 
		\[ i^! \scrA \cong a_x^! i_{x,x+\fr{u}}^! i^! \scrA \cong a_x^! i_{x,\fr{g}}^! \scrA \].
		We can then apply $a_{x*}$ to finish the proof.
		\begin{align*}
			a_{x*} i^! \scrA &\cong a_{x*} a_x^! i_{x,\fr{g}}^! \scrA \\
			&\cong a_{x*} a_x^* i_{x, \fr{g}}^! \scrA [2\dim U] \\
			&\cong i_{x, \fr{g}}^! \scrA [2\dim U]
		\end{align*}
		where $a_{x*} a_x^*  \cong \text{Id}$ follows from the fact that $\fr{u}$ is contractible.
	\end{proof}

	\begin{lemma}\label{supp_of_res_small}
		Let $\scrA$ be a res-small perverse sheaf in $\Perv_G (\fr{g}, \E)$. Then $\scrA$ is supported on $Z(\fr{g}) + \scrN$.
	\end{lemma}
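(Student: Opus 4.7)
The plan is to argue the contrapositive using Lemma \ref{cusp_tech_lemma1}. Since $\scrA$ is simple, I may write $\scrA = \IC(\bar{\scrO}, \scrL)$ for a single $G$-orbit $\scrO \subseteq \fr{g}$ and an irreducible local system $\scrL$ on $\scrO$, so $\supp(\scrA) = \bar{\scrO}$. First I will observe that $Z(\fr{g}) + \scrN$ is a closed $G$-invariant subset of $\fr{g}$: using the reductive decomposition $\fr{g} = Z(\fr{g}) \oplus [\fr{g},\fr{g}]$, it is exactly the preimage of the nilpotent cone of $[\fr{g},\fr{g}]$ under the projection to $[\fr{g},\fr{g}]$. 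By uniqueness of Jordan decomposition, $\bar{\scrO} \subseteq Z(\fr{g}) + \scrN$ iff the semisimple part of some (equivalently, every) element of $\scrO$ is central, so I only need to establish this.

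Suppose for contradiction that some $x \in \scrO$ has Jordan decomposition $x = s + n$ with $s \notin Z(\fr{g})$. Under the running characteristic hypotheses, the centralizer $L := Z_G(s)$ is a Levi of some parabolic $P$, and $P \subsetneq G$ precisely because $s$ is non-central. Since $s \in Z(\fr{l})$ and $n$ commutes with $s$, I have $n \in \fr{l}$ and hence $x \in \fr{l}$, so Lemma \ref{cusp_tech_lemma1} applies to yield
\[
i_{x,\fr{l}}^! \RRes_P^G \scrA \;\cong\; i_{x,\fr{g}}^! \scrA\,[2\dim U].
\]
Res-smallness of $\scrA$ forces the left-hand side to vanish, so $i_{x,\fr{g}}^! \scrA = 0$.

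To derive the contradiction I will compute $i_{x,\fr{g}}^! \scrA$ directly from the fact that $x$ lies in the open orbit $\scrO$ of the support. Writing $\bar{\jmath} : \bar{\scrO} \hookrightarrow \fr{g}$ for the closed inclusion and $j' : \scrO \hookrightarrow \bar{\scrO}$ for the open inclusion, I can rewrite $i_{x,\fr{g}}^! \scrA$ as $i_{x,\scrO}^! (j')^*\bar{\jmath}^*\scrA$, using that $\bar{\jmath}^!\bar{\jmath}_* = \mathrm{id}$ on sheaves supported in $\bar{\scrO}$ and that $(j')^! = (j')^*$ for open inclusions. Since $\scrA|_\scrO \cong \scrL[\dim \scrO]$ and $\scrO$ is smooth, a routine shift calculation gives $i_{x,\fr{g}}^! \scrA \cong \scrL_x[-\dim \scrO] \neq 0$, contradicting the previous paragraph.

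The main non-routine ingredient I expect to invoke is the assertion that $Z_G(s)$ is the Levi of a parabolic for every semisimple $s \in \fr{g}$; this is exactly where the hypothesis that $p$ is very good (or $G$ of type $A_n$) is used, since in that setting centralizers of semisimple elements of $\fr{g}$ are known to be connected reductive subgroups realized as Levi factors. Everything else amounts to bookkeeping of shifts and a direct application of Lemma \ref{cusp_tech_lemma1}.
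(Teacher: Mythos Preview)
Your overall strategy---apply Lemma \ref{cusp_tech_lemma1} at a point $x$ of the open stratum and show $i_{x,\fr{g}}^! \scrA \neq 0$ there---is exactly the paper's, and the costalk computation at the end is fine. The problem is the opening move: you write ``since $\scrA$ is simple, I may write $\scrA = \IC(\bar{\scrO}, \scrL)$ for a single $G$-orbit $\scrO$''. That is not available. There are infinitely many $G$-orbits in $\fr{g}$, so simple objects of $\Perv_G(\fr{g},\E)$ are IC-extensions from smooth irreducible $G$-stable locally closed subvarieties, not necessarily from single orbits; e.g.\ $\underline{\E}_{\fr{g}}[\dim\fr{g}] = \IC(\fr{g}_{\rs},\underline{\E})$ is simple but $\fr{g}_{\rs}$ is a union of continuously many orbits. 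Your subsequent reduction ``$\bar{\scrO} \subseteq Z(\fr{g})+\scrN$ iff the semisimple part of \emph{every} element of $\scrO$ is central'' genuinely uses that all points of $\scrO$ are conjugate, so the argument does not go through as written.

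The fix is immediate and recovers the paper's proof: replace $\scrO$ by any smooth $G$-stable open dense $S \subset \supp(\scrA)$ on which $\scrA$ restricts to a shifted local system. If $\supp(\scrA) \not\subseteq Z(\fr{g})+\scrN$, then since $Z(\fr{g})+\scrN$ is closed there exists $x \in S$ with non-central semisimple part, and your contradiction via Lemma \ref{cusp_tech_lemma1} applies verbatim to that $x$. (Equivalently, run the argument for every $x\in S$ and conclude $s\in Z(\fr{g})$ pointwise, which is what the paper does.) With this correction your proof and the paper's coincide.
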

	\begin{proof}
		There is a smooth $G$-invariant subvariety $j : S \hookrightarrow \fr{g}$ which is open and dense in the support of $\scrA$ and such that $j^! \scrA$ is a local system $\scrE$ on $S$.

		By Lemma \ref{cusp_tech_lemma1}, there is some parabolic subalgebra $\fr{p}$ such that the costalks of the restriction for $x = s+n \in S$ can be computed via
		\begin{align*}
			 i_{x, \fr{l}}^! \LRes_P^G \scrA &= i_{x, \fr{g}}^! \scrA [2 \dim U] \\
			 &= i_{x, S}^! \scrE [ 2\dim U] \\
			&= i_{x,S}^! j^! \scrA [2\dim U] \\
			&= i_{x,S}^! \scrE [2\dim U]
		\end{align*}
		Since $\scrE$ is a nonzero local system on $S$, we have that $\LRes_P^G \scrA \neq 0$.
		But by assumption $\LRes_P^G \scrA = 0$ for any proper parabolic subalgebra $\fr{p}$, so we must have that $\fr{p} = \fr{g}$. 
		Recall that in the setup of Lemma \ref{cusp_tech_lemma1}, we had that $\fr{l} = Z_{\fr{g}} (s)$. 
		Since, $\fr{p} = \fr{g}$ and $\fr{g}$ is reductive, we must have that $\fr{l} = \fr{g}$. In particular, $Z_{\fr{g}} (s) = \fr{g}$ hence $s \in Z (\fr{g})$.
		As a consequence of $s$ being central, we have that $x \in Z(\fr{g}) + \scrN$. Therefore $S \subset Z(\fr{g}) + \scrN$, and likewise, $\overline{S} \subset Z(\fr{g}) + \scrN$.
	\end{proof}

	\begin{lemma}\label{supp_of_res}
		Let $P = L \ltimes U$ (with Lie algebras $\fr{p} = \fr{l} \oplus \fr{u}$) be a parabolic subgroup with Levi decomposition. 
		Suppose $\scrA \in \Perv_G (\fr{g}, \E)$. If $\supp (\scrA) \subseteq \scrN$, then $\supp (\LRes_P^G \scrA) \subseteq \scrN \cap \fr{l}$.
	\end{lemma}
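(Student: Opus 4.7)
The plan is to trace through the definition of $\LRes_P^G$ and track supports at each step, reducing the statement to a standard linear-algebraic fact about the Levi projection of a nilpotent element.

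First, I would unwind the definition $\LRes_P^G = \For_L^P \circ \pi_! \circ i^* \circ \For_P^G$, where $i : \fr{p} \hookrightarrow \fr{g}$ is the inclusion and $\pi : \fr{p} \twoheadrightarrow \fr{l}$ is the quotient by $\fr{u}$. The forgetful functors act as the identity on underlying complexes, hence preserve support, so it suffices to analyze $\pi_! i^* \scrA$. By hypothesis $\supp(\scrA) \subseteq \scrN$, so $\supp(i^* \scrA) \subseteq i^{-1}(\scrN) = \scrN \cap \fr{p}$. Since proper pushforward satisfies $\supp(\pi_! \scrF) \subseteq \overline{\pi(\supp \scrF)}$, we conclude
\[ \supp(\LRes_P^G \scrA) \subseteq \overline{\pi(\scrN \cap \fr{p})}. \]

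The main (and essentially only) content is then to verify the inclusion $\pi(\scrN \cap \fr{p}) \subseteq \scrN \cap \fr{l}$, which is a classical fact: if $x \in \fr{p}$ is nilpotent in $\fr{g}$, then its Levi component $\pi(x) \in \fr{l}$ is nilpotent in $\fr{l}$. The argument is that $\operatorname{ad}(x)$ is nilpotent on $\fr{g}$, hence on the subspace $\fr{p}$, and it preserves the ideal $\fr{u} \subseteq \fr{p}$, so it descends to a nilpotent endomorphism of $\fr{p}/\fr{u} \cong \fr{l}$. But this induced endomorphism is precisely $\operatorname{ad}_{\fr{l}}(\pi(x))$, which therefore is nilpotent, so $\pi(x)$ is a nilpotent element of $\fr{l}$.

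Since $\scrN \cap \fr{l}$ is Zariski-closed in $\fr{l}$, the closure in the bound above is harmless, and we obtain $\supp(\LRes_P^G \scrA) \subseteq \scrN \cap \fr{l}$ as required. I do not anticipate any real obstacle: the proof is pure support bookkeeping plus the elementary Lie-algebraic observation above, which incidentally also underlies the implicit compatibility $\tau(\scrN) \cap \fr{l} = \scrN \cap \fr{l}$ that one would use if working with the alternative formulation $\LRes_P^G = \For_L^P \circ j^* \circ \tau_! \circ \For_P^G$.
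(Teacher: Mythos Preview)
Your proposal is correct and follows essentially the same approach as the paper: trace supports through $\pi_! i^*$ to reduce to the inclusion $\pi(\scrN \cap \fr{p}) \subseteq \scrN \cap \fr{l}$. The only minor difference is in justifying that inclusion: the paper invokes the structural identity $\scrN \cap \fr{p} = (\scrN \cap \fr{l}) + \fr{u}$ and reads off $\pi(\scrN \cap \fr{p}) = \scrN \cap \fr{l}$ directly, whereas you give a self-contained argument via the induced nilpotent endomorphism on $\fr{p}/\fr{u}$; both are standard and equally valid.
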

	\begin{proof}
		A simple approximation of the support gives the following upper bound:
		\begin{align}
			\supp (\LRes_P^G \scrA) &= \supp (\LRes \scrA) \notag \\
			&= \supp (\For_L^P \circ \pi_! \circ i^* \circ \For_P^G \scrA) \notag \\
			&\subseteq \pi \left(\supp (i^* \For_P^G \scrA) \right) \tag{support of pushforward}\\
			&\subseteq \pi \left(  i^{-1} \left( \supp (\For_P^G \scrA )\right) \right)  \tag{support of pullback}\\
			&= \pi (i^{-1} (\scrN))
		\end{align}
		Note that $i^{-1} \scrN = \scrN \cap \fr{p} = (\scrN \cap \fr{l}) + \fr{u}$. Then $\pi (i^{-1} (\scrN)) = \scrN \cap \fr{l}$.
	\end{proof}

	\begin{lemma}\label{cusp_and_support_equiv}
		Let $\fr{g}$ be a semisimple Lie algebra. An irreducible perverse sheaf $\scrA \in \Perv_G (\fr{g}, \E)$ is cuspidal if and only if $\scrA$ and $\T_\fr{g} \scrA$ are supported in $\scrN$.
	\end{lemma}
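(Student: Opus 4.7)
The plan is to establish the two directions of the equivalence separately, using the preceding lemmas of this section as the main inputs.

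For the forward direction, assume $\scrA$ is cuspidal. Since $\fr{g}$ is semisimple, $Z(\fr{g}) = 0$, so the $\boxtimes$-factored condition in Definition~\ref{defn:cuspidal} is automatic and cuspidality coincides with being res-small. Lemma~\ref{supp_of_res_small} then gives $\supp(\scrA) \subseteq Z(\fr{g}) + \scrN = \scrN$. By Lemma~\ref{cusp_iff_fourier_is_cuspidal}, $\T_\fr{g}\scrA$ is also res-small, hence cuspidal in the semisimple case, so applying Lemma~\ref{supp_of_res_small} once more yields $\supp(\T_\fr{g}\scrA) \subseteq \scrN$.

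For the reverse direction, suppose $\scrA$ and $\T_\fr{g}\scrA$ are both supported in $\scrN$. I would fix an arbitrary proper parabolic $P = L \ltimes U \subsetneq G$ and show $\LRes_P^G\scrA = 0$. Lemma~\ref{supp_of_res} applied to $\scrA$ gives $\supp(\LRes_P^G\scrA) \subseteq \scrN \cap \fr{l}$. Applying the same lemma to $\T_\fr{g}\scrA$ and invoking the Fourier compatibility of Lemma~\ref{ind_res_fourier_compat} yields
\[\supp\bigl(\T_\fr{l}\LRes_P^G\scrA\bigr) = \supp\bigl(\LRes_P^G\T_\fr{g}\scrA\bigr) \subseteq \scrN \cap \fr{l}\]
under the identification of $\fr{l}'$ with $\fr{l}$ via the fixed bilinear form.

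The decisive step is then a Fourier-support dichotomy on $\fr{l}$. Decomposing $\fr{l} = Z(\fr{l}) \oplus [\fr{l},\fr{l}]$ and using Jordan decomposition together with $Z(\fr{g}) = 0$, one checks that $\scrN \cap \fr{l} = \{0\} \oplus \scrN_{[\fr{l},\fr{l}]}$. Consequently $\LRes_P^G\scrA$ factors as an external product $i_{0*}\E \boxtimes \scrG$ for some $\scrG$ on $[\fr{l},\fr{l}]$ supported in $\scrN_{[\fr{l},\fr{l}]}$, where $i_0 : \{0\} \hookrightarrow Z(\fr{l})$. Its Fourier transform is isomorphic to $\T_{Z(\fr{l})}(i_{0*}\E) \boxtimes \T\scrG$, and $\T_{Z(\fr{l})}(i_{0*}\E)$ is (up to shift) the constant sheaf on $Z(\fr{l})'$, whose support is all of $Z(\fr{l})'$. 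Forcing this support into $\{0\} \oplus \scrN_{[\fr{l},\fr{l}]}$ yields either $Z(\fr{l})' = 0$ or $\scrG = 0$. Since $G$ is semisimple and $P$ is proper, $Z(\fr{l})$ has positive dimension, so $\scrG = 0$ and hence $\LRes_P^G\scrA = 0$. As $P$ was arbitrary, $\scrA$ is res-small, and so cuspidal.

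The main obstacle I anticipate is this final Fourier-support dichotomy: one must be careful about the identifications between $\fr{l}$ and its dual, the precise form of $\scrN \cap \fr{l}$ in the semisimple setting (which genuinely uses $Z(\fr{g}) = 0$ to rule out nonzero central semisimple elements being ad-nilpotent in $\fr{g}$), and the observation that the Fourier transform of a skyscraper at $0 \in Z(\fr{l})$ has full support on $Z(\fr{l})'$. The remaining steps are essentially bookkeeping with the lemmas already established.
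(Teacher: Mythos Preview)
Your proposal is correct and follows essentially the same approach as the paper: the forward direction is identical, and for the converse both you and the paper bound $\supp(\LRes_P^G\scrA)$ and $\supp(\T_\fr{l}\LRes_P^G\scrA)$ inside $\scrN\cap\fr{l}\subseteq[\fr{l},\fr{l}]$, factor $\LRes_P^G\scrA$ as a skyscraper at $0\in Z(\fr{l})$ box a sheaf on $[\fr{l},\fr{l}]$, and observe that its Fourier transform then has $Z(\fr{l})$-invariant support, forcing vanishing when $Z(\fr{l})\neq 0$. One small remark: the inclusion $\scrN\cap\fr{l}\subseteq\{0\}\oplus[\fr{l},\fr{l}]$ holds for any reductive $\fr{g}$ (since $Z(\fr{l})$ is toral in $\fr{g}$), so your parenthetical about needing $Z(\fr{g})=0$ for that step is overly cautious, though harmless.
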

	\begin{proof}
		By Lemma \ref{supp_of_res_small}, if $\scrA$ is res-small, then $\supp \scrA \subset Z(\fr{g}) + \scrN$.
		Since $\fr{g}$ is semisimple, $Z(\fr{g}) = 0$. Hence, $\supp \scrA \subset \scrN$. By Lemma \ref{cusp_iff_fourier_is_cuspidal}, $\T_{\fr{g}} \scrA$ is also cuspidal, so $\supp \T_{\fr{g}} \scrA \subset \scrN$ as well.

		Now suppose $\scrA$ and $\T_{\fr{g}} \scrA$ are supported on $\scrN$. For any parabolic with Levi decomposition, $\fr{p} = \fr{l} \oplus \fr{u}$.
		Define $\scrB = \LRes_P^G \scrA$ and $\T_{\fr{l}} \scrB = \T_{\fr{l}} \LRes_P^G \scrA = \LRes_P^G \T_{\fr{g}} \scrA$. By Lemma \ref{supp_of_res}, $\scrB$ and $\T_\fr{l} \scrB$ are both supported on $\scrN \cap \fr{l} \subseteq [\fr{l}, \fr{l}]$.

		Since $\scrB$ is supported on $[\fr{l}, \fr{l}]$, we can find a sheaf $\overline{\scrB}$ on $[\fr{l}, \fr{l}]$ such that
		\[ \scrB \cong (i_{ [\fr{l}, \fr{l}], \fr{l} })_* \overline{\scrB}\]
		We can view $\overline{\scrB}$ as a sheaf on $[\fr{l}, \fr{l}] \times \{0\} \rightarrow [\fr{l}, \fr{l}] \times Z(\fr{l})$, more precisely, we have that $\scrB \cong \overline{\scrB} \boxtimes (i_{0, \Z (\fr{l})})_* \underline{\E}_0$. 
		This formulation of $\scrB$ behaves well with regard to the Fourier transform. In particular,
		\begin{align}
			\T_\fr{l} \scrB &= \T_\fr{l} \left( \overline{\scrB} \boxtimes \left(i_{0,Z(\fr{l})}\right)_* \underline{\E}_0 \right) \notag \\
			&= \T_{[\fr{l}, \fr{l}]} \overline{\scrB} \boxtimes \T_{Z(\fr{l})} \left( i_{0, Z(\fr{l})}\right)_* \underline{\E}_0 \tag{external product compatibility} \\
			&= \T_{[\fr{l}, \fr{l}]} \overline{\scrB} \boxtimes \underline{\E}_{Z(\fr{l})} \notag
		\end{align}
		As a consequence, $\T_\fr{l} \scrB$ has $Z(\fr{l})$-invariant support. If $\fr{p}$ is a proper parabolic subalgebra, then $Z(\fr{l}) \neq 0$. 
		But by assumption $\T_\fr{l} \scrB$ has support in $\scrN$, so this former claim can only happen if $\LRes_P^G \T_{\fr{l}} \scrB = 0$ or equivalently, if $\LRes_P^G \scrB = 0$.
		This proves that $\scrA$ is res-small, and hence cuspidal.
	\end{proof}

	\begin{corollary}\label{semisimple_cusp_are_cs}
		Let $\fr{g}$ be semisimple. Then any cuspidal sheaf is a character sheaf and an orbital sheaf.
	\end{corollary}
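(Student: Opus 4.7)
The plan is to invoke Lemma \ref{cusp_and_support_equiv} together with Corollary \ref{cs_are_fourier_of_orbital}; once those are available, the corollary follows almost formally. First, since $\scrA$ is cuspidal and $\fr{g}$ is semisimple, Lemma \ref{cusp_and_support_equiv} tells us that both $\scrA$ and $\T_\fr{g} \scrA$ are supported in the nilpotent cone $\scrN$.

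Next, I would argue that support in $\scrN$ together with simplicity forces the sheaf to be orbital. A simple $G$-equivariant perverse sheaf is of the form $\IC (\scrO, \scrL)$ for a $G$-orbit $\scrO$ and an irreducible local system $\scrL$ on $\scrO$, so its support is $\overline{\scrO}$, the closure of a single $G$-orbit. Hence $\scrA \in \Perv_G^\Orb (\fr{g}, \E)$. Since the Fourier-Deligne transform is an equivalence of abelian categories, $\T_\fr{g} \scrA$ is also simple, and by the same reasoning $\T_\fr{g} \scrA \in \Perv_G^\Orb (\fr{g}, \E)$ as well.

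Finally, applying Corollary \ref{cs_are_fourier_of_orbital} to the orbital sheaf $\T_\fr{g} \scrA$, we conclude that $\scrA \cong \T_\fr{g} (\T_\fr{g} \scrA)$ is a character sheaf, giving both assertions simultaneously. There is no real obstacle remaining: the geometric heavy lifting was carried out in Lemma \ref{cusp_and_support_equiv} (and the support computation of Lemma \ref{supp_of_res_small} feeding into it), while the Fourier-theoretic identification of $\Perv_G^{\CS}$ with $\Perv_G^{\Orb}$ was established in Corollary \ref{cs_are_fourier_of_orbital}. The only mild subtlety is to remember that $\scrA$ being simple ensures its support is irreducible, which is exactly what upgrades ``supported in $\scrN$'' to ``supported in the closure of a single orbit.''
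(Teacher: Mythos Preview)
Your overall strategy matches the paper's proof exactly: invoke Lemma \ref{cusp_and_support_equiv} to get nilpotent support for both $\scrA$ and $\T_\fr{g}\scrA$, deduce that each is orbital, then apply Corollary \ref{cs_are_fourier_of_orbital}. However, your justification for the middle step is not correct as written.

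You assert that ``a simple $G$-equivariant perverse sheaf is of the form $\IC(\scrO,\scrL)$ for a $G$-orbit $\scrO$.'' This is false for $\Perv_G(\fr{g},\E)$ in general: $\fr{g}$ has infinitely many $G$-orbits, and a simple object is $\IC(Y,\scrL)$ for some irreducible $G$-stable locally closed $Y$, which need not be a single orbit. Your closing remark that simplicity makes the support irreducible is correct, but an irreducible $G$-stable closed subvariety of $\fr{g}$ is again not automatically an orbit closure. What actually makes the step work is the fact (used explicitly in the paper's proof) that $\scrN$ has only \emph{finitely many} $G$-orbits: then an irreducible closed $G$-stable subset of $\scrN$ is a finite union of orbits and hence, by irreducibility, the closure of one of them. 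Once you insert this finiteness of nilpotent orbits, your argument is complete and coincides with the paper's.
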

	\begin{proof}
		Let $\scrA$ be cuspidal, then $\scrA$ and $\T (\scrA)$ are supported in $\scrN$. 
		Since there are only finitely many nilpotent orbits, it then follows that $\scrA$ is orbital and $\T (\scrA)$ is orbital. 
		By Corollary \ref{cs_are_fourier_of_orbital}, $\scrA$ is then also a character sheaf.
	\end{proof}

	\begin{lemma}\label{equiv_defn_for_cs_factorization}
		A simple sheaf $\scrA$ in $\Perv_G (\fr{g}, \E)$ is a character sheaf if and only if it is $\boxtimes$-factored with the central and semisimple factors being character sheaves.
	\end{lemma}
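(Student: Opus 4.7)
The plan is to leverage Corollary \ref{cs_are_fourier_of_orbital}, which identifies character sheaves with Fourier transforms of orbital sheaves, and then exploit the fact that $G$ acts trivially on $Z(\fr{g})$: every $G$-orbit in $\fr{g} = Z(\fr{g}) \oplus [\fr{g},\fr{g}]$ is a product $\{z\} \times \scrO'$ for some $z \in Z(\fr{g})$ and some $G$-orbit $\scrO' \subseteq [\fr{g},\fr{g}]$, and its closure is $\{z\} \times \overline{\scrO'}$. Throughout I will use the compatibility of the Fourier transform with external products.

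For the forward direction, suppose $\scrA$ is a simple character sheaf. By Corollary \ref{cs_are_fourier_of_orbital}, $\T\scrA$ is a simple orbital sheaf, so $\T\scrA \cong \IC(\{z\} \times \overline{\scrO'}, \scrM)$ for some irreducible $G$-equivariant local system $\scrM$ on $\{z\} \times \scrO'$. Since $G$ acts trivially on the first factor, $\scrM$ decomposes as $\underline{M} \boxtimes \scrM'$ for an $\E$-module $M$ and an irreducible $G$-equivariant local system $\scrM'$ on $\scrO'$; passing to IC-extensions on the closure gives $\T\scrA \cong (i_{z*}M) \boxtimes \IC(\overline{\scrO'}, \scrM')$. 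Applying $\T$ back and using its compatibility with external products yields
\[ \scrA \cong \T(i_{z*}M) \boxtimes \T(\IC(\overline{\scrO'}, \scrM')). \]
By Lemmas \ref{abelian_cs:skyscaper_ls} and \ref{equiv_of_cs_defn}, the first factor is a character sheaf on $Z(\fr{g})$; by Corollary \ref{cs_are_fourier_of_orbital} applied to $[\fr{g},\fr{g}]$, the second factor is a character sheaf on $[\fr{g},\fr{g}]$. Hence $\scrA$ is $\boxtimes$-factored with the required properties.

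For the backward direction, suppose $\scrA \cong \scrL \boxtimes \scrB$ with $\scrL$ a character sheaf on $Z(\fr{g})$ and $\scrB$ a character sheaf on $[\fr{g},\fr{g}]$. Then $\T\scrA \cong \T\scrL \boxtimes \T\scrB$. By Lemma \ref{abelian_cs:skyscaper_ls}, $\T\scrL$ is a skyscraper at a single point $z \in Z(\fr{g})$; by Corollary \ref{cs_are_fourier_of_orbital}, $\T\scrB$ is an orbital sheaf on $[\fr{g},\fr{g}]$ supported on some $\overline{\scrO'}$. Therefore $\T\scrA$ is supported on $\{z\} \times \overline{\scrO'}$, which is the closure of a single $G$-orbit in $\fr{g}$, so $\T\scrA$ is orbital. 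A final application of Corollary \ref{cs_are_fourier_of_orbital} then shows that $\scrA$ is a character sheaf.

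I expect the main obstacle to be the factorization step in the forward direction: one must justify that an irreducible $G$-equivariant local system on the product orbit $\{z\} \times \scrO'$ (on which $G$ acts only through the second factor) is itself an external tensor product, and then check that this factorization lifts cleanly to the corresponding IC-extensions on the closure. This should reduce to a standard Künneth-type statement for equivariant local systems, using the triviality of the $G$-action on $\{z\}$; in the analytic/conic setting it additionally forces $z = 0$, matching the constraint in Lemma \ref{abelian_cs:skyscaper_ls}(2). Everything else is a formal consequence of the Fourier transform being an equivalence compatible with external products.
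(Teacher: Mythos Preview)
Your proof is correct and follows essentially the same route as the paper's: both directions use Corollary \ref{cs_are_fourier_of_orbital} together with the compatibility of $\T$ with external products and the fact that every $G$-orbit in $\fr{g}$ has the form $\{z\}\times\scrO'$. The ``obstacle'' you flag is a non-issue in the paper's phrasing: since the central factor of the support is a single point, any simple perverse sheaf on $\{z\}\times\overline{\scrO'}$ is automatically $(i_{z*}\underline{\E})[\dim Z(\fr{g})]\boxtimes\scrB'$ for some $\scrB'$ on $[\fr{g},\fr{g}]$, so no K\"unneth argument for local systems is needed.
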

	\begin{proof}
		\textbf{[$\implies$]} First, suppose that $\scrA$ is a character sheaf. 
		Write $\scrA = \T_{\fr{g}} \scrB$ for an orbital sheaf $\scrB$. Let $\scrO_x$ be the $G$-orbit of some $x \in \fr{g}$ such that $\supp (\scrB) = \overline{\scrO_x}$.
		We can then write $x = z + y$ with $z \in Z (\fr{g})$ and $y \in [\fr{g}, \fr{g}]$. 
		Clearly, $\scrO_x = z + {}^G y$ and similarly $\overline{\scrO_x} = z + \overline{{}^G y}$. Let $a_z : \{z\} \hookrightarrow Z (\fr{g})$ be the obvious inclusion. 
		Therefore, $\scrB \cong a_{z*} \underline{\E}_{\{z\}} [\dim Z (\fr{g})] \boxtimes \scrB'$ where $\scrB'$ is a perverse sheaf on $[\fr{g}, \fr{g}]$ supported on $\overline{{}^G y}$.
		Since Fourier transforms commute with external tensor products, we have that $\scrA \cong \T (a_{z*} \underline{\E} \{z\}) [\dim Z (\fr{g})] \boxtimes \T (\scrB')$.
		Clearly, $\scrB'$ is orbital, and hence $\T (\scrB')$ is a character sheaf. Similarly, by Lemma \ref{abelian_cs:skyscaper_ls}, $\T (a_{z*} \underline{\E}_\{z\})$ is a character sheaf.

		\textbf{[$\impliedby$]} Let $\scrA = \scrL \boxtimes \scrB$ with $\scrL$ a character sheaf on $\Perv (Z(\fr{g}), \E)$ and $\scrB$ a character sheaf on $\Perv_G ([\fr{g}, \fr{g}], \E)$.
		By Corollary \ref{cs_are_fourier_of_orbital}, it suffices to show that $\T \scrA$ is orbital. Note that $\T \scrA \cong \T \scrL \boxtimes \T \scrB$. By assumption, $\T \scrL$ is a skyscraper sheaf and $\T \scrB$ is an orbital sheaf (since $\scrB$ is a character sheaf).
		Let $\scrO$ be a $G$-orbit in $[\fr{g}, \fr{g}]$ such that $\scrB$ is supported on $\overline{\scrO}$. We then have that $\T \scrA$ is supported on $\overline{\alpha + \scrO}$ for some $\alpha \in \fr{h}$. Finally, we remark that $\alpha + \scrO$ is a single $G$-orbit; therefore, $\T \scrA$ is orbital.
	\end{proof}
	
	\begin{midsecproof}{Theorem \ref{reductive_cusp_are_cs}}
		Let $\scrA$ be a cuspidal sheaf in $\Perv_G (\fr{g}, \E)$. We can write $\scrA = \scrL \boxtimes \scrB$ for $\scrL$ a character sheaf on $Z(\fr{g})$ and $\scrB$ a sheaf on $[\fr{g}, \fr{g}]$.
		By Lemma \ref{equiv_defn_for_cs_factorization}, it suffices to show that $\scrB$ is cuspidal, and hence a character sheaf by the semisimple case covered in Corollary \ref{semisimple_cusp_are_cs}.
		
		First, we note that any parabolic subalgebra $\fr{p}$ of $\fr{g}$ must contain $Z(\fr{g})$. 
		As a result, $\fr{p} = Z(\fr{g}) \oplus \fr{p}'$. Recall $i : \fr{p} \to \fr{g}$. We can then write it as $i = \id_{Z(\fr{g})} + i'$ where $i' : \fr{p}' \to [\fr{g}, \fr{g}]$.
		Similarly, for $\pi : \fr{p} \to \fr{l}$, we can write it as $\pi = \pi'' + \pi'$ for $\pi'' : Z(\fr{g}) \to \pi (Z(\fr{g}))$ and $\pi' : \fr{p}' \to \pi (\fr{p})$.
		Denote $\fr{l}' = \im \pi''$.
		We can now compute. We will omit the equivariance since it does not play a role. The action only occurs on $[\fr{g}, \fr{g}]$.
		\begin{align*}
			\LRes_P^G \left( \scrL \boxtimes \scrB \right) &= \pi_! \circ i^* (\scrL \boxtimes \scrB) \\
			&= \pi_! (\scrL \boxtimes (i')^* \scrB) \\
			&= (\pi_!'' \scrL \boxtimes \pi_!' (i')^* \scrB ) \\
			&= \pi_!'' \scrL \boxtimes \LRes_{L' \subset P'}^{[G, G]} \scrB
		\end{align*}
		Since $\scrL \neq 0$, and $\im \pi'' \neq 0$, we have that $\pi_!'' \scrL \neq 0$. As a result, we have that $\scrA$ is res-small if and only if $\scrB$ is also res-small.
		We note that the parabolic subalgebras $\fr{p}'$ give all the parabolic subalgebras in $[\fr{g}, \fr{g}]$. Therefore, $\scrB$ is a cuspidal sheaf.
	\end{midsecproof}

	\section{Admissible Sheaves}

	Classically, admissible sheaves were studied before Lusztig's discovery of character sheaves \cite{ICC}. They are easy to define, but in the positive characteristic case, there are many more than in the characteristic 0 case. Nonetheless, many of the same results can be proven true.
	Our goal will be to eventually prove that the class of admissible sheaves coincides with the class of character sheaves. This will make precise the earlier statement that character sheaves can be built from cuspidal sheaves. 
	Our strategy will be to reduce the study of admissible complexes to understanding how $\Ad G$-orbits behave under parabolic induction.

	\subsection{Admissible Sheaves}

	\begin{definition}\label{defn:admissible}
		Let $\Adm_G (\fr{g}, \E)$ be the full subcategory of $\Perv_G (\fr{g}, \E)$ generated by the irreducible subquotients of objects in the image of the functor
		\[ \Ind : \bigoplus_{\stackrel{P \subset G}{\textnormal{parabolic}}} \Cusp_P (\fr{p}, \E) \to \Perv_G (\fr{g}, \E) \]
		where $\Ind \scrA = \Ind_P^G \scrA$ for $\scrA \in \Cusp_P (\fr{p}, \E)$.
		The \textbf{admissible sheaves} are the simple objects in $\Adm_G (\fr{g}, \E)$.
	\end{definition}

	Our main result regarding admissible sheaves are that they agree with character sheaves. We will spend the rest of the section proving this equivalence.
	
	\begin{theorem}\label{admissible_and_cs_are_the_same}
		There is an equality of Serre subcategories of $\Perv_G (\fr{g}, \E)$,
		\[\Perv_G^{\CS} (\fr{g}, \E) = \Adm (\fr{g}, \E)\]
	\end{theorem}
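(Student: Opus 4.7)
The plan is to prove the two inclusions in Theorem \ref{admissible_and_cs_are_the_same} separately, using the Fourier characterization of character sheaves (Corollary \ref{cs_are_fourier_of_orbital}) together with the compatibility of the Fourier-Deligne transform with parabolic induction and restriction (Lemma \ref{ind_res_fourier_compat}) as the main bridge between the character-sheaf and orbital-sheaf categories.

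For the inclusion $\Adm(\fr{g}, \E) \subseteq \Perv_G^{\CS}(\fr{g}, \E)$, let $\scrA$ be a simple admissible sheaf, so by definition $\scrA$ is a subquotient of $\Ind_P^G \scrC$ for some parabolic $P = L \ltimes U$ and some cuspidal $\scrC$ attached to the Levi $\fr{l}$. By Theorem \ref{reductive_cusp_are_cs}, $\scrC$ is a character sheaf on $\fr{l}$, so Corollary \ref{cs_are_fourier_of_orbital} gives that $\T_\fr{l} \scrC$ is orbital. Since $\T_\fr{g} \circ \Ind_P^G \cong \Ind_P^G \circ \T_\fr{l}$, it suffices to show that $\Ind_P^G$ sends orbital sheaves to orbital sheaves. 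Lemma \ref{lem:ind_equiv_version_of_ind} bounds the support of $\Ind_P^G \scrD$ by $G \cdot (\supp \scrD + \fr{u})$, so the claim reduces to showing $G \cdot (x + \fr{u})$ is a finite union of $G$-orbits for each $x \in \fr{l}$. Writing $x = s + n$ in Jordan decomposition and decomposing $\fr{u}$ under $\mathrm{ad}(s)$, a standard argument shows that the semisimple part of any element of $x + \fr{u}$ is $U$-conjugate to $s$, so every element of $x + \fr{u}$ is $G$-conjugate to some $s + m$ with $m \in \scrN \cap Z_\fr{g}(s)$; the finiteness of nilpotent $Z_G(s)$-orbits then finishes the argument.

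For the inclusion $\Perv_G^{\CS}(\fr{g}, \E) \subseteq \Adm(\fr{g}, \E)$, proceed by induction on the semisimple rank of $G$. The torus base case is immediate: the only parabolic is $G$ itself, and character sheaves on an abelian Lie algebra are additive local systems, which are trivially cuspidal. For the inductive step, let $\scrA$ be a simple character sheaf on $\fr{g}$. If $\scrA$ is cuspidal then it is admissible; otherwise Proposition \ref{prop:res_small_conds} gives a proper parabolic $P \subsetneq G$ with $\LRes_P^G \scrA \neq 0$. The analog of the preceding support analysis for $\LRes_P^G$, showing that $\pi(\overline{G \cdot x} \cap \fr{p})$ meets only finitely many $L$-orbits, combined with Fourier compatibility (Lemma \ref{ind_res_fourier_compat}), shows that every simple perverse composition factor $\scrB$ of $\LRes_P^G \scrA$ is a character sheaf on $\fr{l}$. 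By the induction hypothesis, $\scrB$ is admissible on $\fr{l}$. Taking $\scrB$ to be a simple quotient of some ${}^p H^i(\LRes_P^G \scrA)$ and using the adjunction $\LRes_P^G \dashv \Ind_P^G$, one obtains $\scrA$ as a subobject (up to a cohomological shift) of $\Ind_P^G \scrB$. Combined with the transitivity of induction (Lemma \ref{ind_res_transitivity}) and the $t$-exactness of $\Ind_P^G$ on perverse sheaves, $\scrA$ is a subquotient of $\Ind_{P''}^G \scrC$ for some parabolic $P''$ with strictly smaller Levi and some cuspidal $\scrC$, completing the induction.

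The main obstacle is the pair of orbital-preservation claims for induction and restriction. The induction case rests on the classical $\mathrm{ad}(s)$-decomposition of $\fr{u}$ together with the finiteness of nilpotent orbits in $Z_\fr{g}(s)$. The restriction case is more delicate, requiring a careful analysis of how $G$-orbits in $\fr{g}$ intersect the parabolic subalgebra $\fr{p}$ and descend to $L$-orbits in $\fr{l}$; this can be handled stratum-by-stratum using the finite decomposition of $\overline{G \cdot x}$ into $G$-orbits (cited after Definition \ref{defn:orbtial}) together with the compatibility of Jordan decomposition with the Levi projection $\pi$.
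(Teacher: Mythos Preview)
Your proposal is correct and follows essentially the same strategy as the paper's proof: the inclusion $\Adm \subseteq \Perv^{\CS}$ comes from Theorem \ref{reductive_cusp_are_cs} together with the fact that induction preserves orbital (equivalently character) sheaves, and the reverse inclusion is handled by induction on the size of $G$ via adjunction and transitivity (Lemma \ref{ind_res_transitivity}). Your inline orbital-preservation arguments are exactly the content of Proposition \ref{res_and_ind_of_cs}, which the paper proves separately and then cites; and the paper inducts on $\dim \fr{g}$ and uses $\RRes$ with a simple subobject, whereas you induct on semisimple rank and use $\LRes$ with a simple quotient---these are harmless dual variations.

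Two small imprecisions are worth tightening. First, ``not cuspidal'' does not immediately give ``$\LRes_P^G \scrA \neq 0$'' via Proposition \ref{prop:res_small_conds}, since cuspidal means res-small \emph{and} $\boxtimes$-factored with additive central factor; you need to invoke Lemma \ref{equiv_defn_for_cs_factorization} to see that a simple character sheaf is automatically $\boxtimes$-factored with an additive central factor, so that for character sheaves ``not cuspidal'' is the same as ``not res-small''. Second, you should take $\scrB$ specifically as a simple quotient of ${}^p H^0(\LRes_P^G \scrA)$ rather than of an arbitrary ${}^p H^i$: since $\Ind_P^G$ is $t$-exact, $\LRes_P^G$ is right $t$-exact, and the adjunction then produces a genuine nonzero perverse map $\scrA \to \Ind_P^G \scrB$ with no cohomological shift; the parenthetical ``up to a cohomological shift'' is misleading and should be dropped. (That ${}^p H^0(\LRes_P^G \scrA)\neq 0$ follows from condition (4) of Proposition \ref{prop:res_small_conds} together with the same right $t$-exactness.)
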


	\begin{proposition}\label{res_and_ind_of_cs}
		Let $P = L \ltimes U$ be a proper parabolic subgroup of $G$ with Lie algebras $\fr{p} = \fr{l} \oplus \fr{u}$. 
		Let $\scrA \in \Perv_G (\fr{g}, \E)$ and $\scrB \in \Perv_L (\fr{l}, \E)$. Then the following holds
		\begin{enumerate}
			\item If $\scrA$ is orbital, then the composition factors of $\Res_P^G \scrA$ are orbital.
			\item If $\scrB$ is orbital, then the composition factors of $\Ind_P^G \scrB$ are orbital.
			\item If $\scrA$ is a character sheaf, then the composition factors of $\Res_P^G \scrA$ are character sheaves.
			\item If $\scrB$ is a character sheaf, then the composition factors of $\Ind_P^G \scrB$ are character sheaves.
		\end{enumerate}
	\end{proposition}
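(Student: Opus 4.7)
The plan is to reduce (3) and (4) to (1) and (2) via the Fourier transform, and then prove the orbital statements by support analysis. By Lemma \ref{ind_res_fourier_compat} the Fourier transform commutes with both $\RRes_P^G$ and $\Ind_P^G$, and by Corollary \ref{cs_are_fourier_of_orbital} it is a $t$-exact equivalence carrying $\Perv_G^{\Orb}$ to $\Perv_G^{\CS}$. So (3) follows by applying (1) to $\T_{\fr{g}'}\scrA$ (which is orbital) and Fourier-transforming the result, and (4) from (2) analogously. I therefore focus on (1) and (2); throughout I treat $\RRes_P^G$, with $\LRes_P^G$ handled identically after invoking Lemma \ref{ind_res_verdier_duality_compat}.

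For (1), I would compute costalks using Lemma \ref{cusp_tech_lemma1}: for $y \in \fr{l}$, $i_{y,\fr{l}}^!\RRes_P^G\scrA \cong i_{y,\fr{g}}^!\scrA\,[2\dim U]$. If $\supp\scrA \subseteq \overline{\scrO}$ for a single $G$-orbit $\scrO$, the right side vanishes for $y \notin \overline{\scrO}$, so $\supp \RRes_P^G\scrA \subseteq \overline{\scrO} \cap \fr{l}$. The remaining task is to show that $\overline{\scrO} \cap \fr{l}$ is a finite union of $L$-orbits; since $\overline{\scrO}$ is a finite union of $G$-orbits, this reduces via Jordan decomposition (which is preserved by passage to the reductive subalgebra $\fr{l}$) to the finite-to-one property of $\fr{h}/W_L \to \fr{h}/W_G$ on semisimple parts, together with the finiteness of nilpotent orbits in each centralizer $Z_\fr{l}(y_s)$. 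Because a sheaf whose support lies in a finite union of $L$-orbit closures has all of its simple subquotients supported in the closure of a single orbit, every composition factor of $\RRes_P^G\scrA$ is orbital.

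For (2), I would use Lemma \ref{lem:ind_equiv_version_of_ind} to write $\Ind_P^G\scrB \cong \mu_! I_P^G \pi^*\scrB\,[2\dim G/P]$, giving $\supp \Ind_P^G\scrB \subseteq G \cdot (\supp\scrB + \fr{u})$. If $\supp\scrB \subseteq \overline{\scrO_L}$, it suffices to show that $G \cdot (\scrO_L' + \fr{u})$ meets only finitely many $G$-orbits for each $L$-orbit $\scrO_L' \subseteq \overline{\scrO_L}$. I would argue this by showing that for $x = y + u \in \scrO_L' + \fr{u}$ with Jordan decomposition $x = x_s + x_n$ in $\fr{g}$, the semisimple part $x_s$ lies in the single $G$-orbit of $y_s$: any semisimple element of $\fr{p}$ may be $P$-conjugated into $\fr{l}$ (choose a maximal torus of $\fr{p}$ containing it, and $P$-conjugate it into one contained in $\fr{l}$), and such a conjugation composed with $\pi$ lands in the $L$-orbit of $y$, whose $\fr{l}$-semisimple part is $y_s$. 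The nilpotent part $x_n$ then ranges over the finitely many $Z_G(x_s)$-orbits of nilpotents in the reductive centralizer $Z_\fr{g}(x_s)$, completing the argument.

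The main obstacle will be the geometric finiteness claim in (2), namely that the Jordan semisimple part of an element of $\scrO_L' + \fr{u}$ is $G$-conjugate to $y_s$; this is the structural ingredient that forces $G \cdot (\scrO_L' + \fr{u})$ into only finitely many orbits. The restriction side (1) is comparatively easy because Lemma \ref{cusp_tech_lemma1} pins the support down directly as an intersection in $\fr{l}$, and the remaining count follows from the standard parametrization of semisimple orbits via $\fr{h}/W$.
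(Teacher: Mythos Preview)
Your overall architecture matches the paper's: parts (3) and (4) are deduced from (1) and (2) via Lemma~\ref{ind_res_fourier_compat} and Corollary~\ref{cs_are_fourier_of_orbital}, and parts (1) and (2) are handled by bounding the support of $\Res$ and $\Ind$ and then counting orbits using Jordan decomposition and the finiteness of nilpotent orbits. Your argument for (2) is essentially the paper's: both bound $\supp\Ind_P^G\scrB$ by $G\cdot\pi^{-1}(S_0)$ and then show the semisimple parts there lie in a single $G$-orbit (the paper does this via $\pi^{-1}(x)=x+\fr u={}^Ux$ rather than your torus-conjugation maneuver, but the content is the same).

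There is, however, a gap in your treatment of (1). You invoke Lemma~\ref{cusp_tech_lemma1} to get $i_{y,\fr l}^!\RRes_P^G\scrA\cong i_{y,\fr g}^!\scrA[2\dim U]$ for \emph{every} $y\in\fr l$ with the \emph{fixed} parabolic $P$. But the proof of that lemma uses $y+\fr u={}^Uy$, which is Lemma~\ref{orbit_calc_lemma}(1) and requires $\fr l=Z_{\fr g}(y_s)$; indeed the paper itself says, in the proof of Lemma~\ref{supp_of_res_small}, ``in the setup of Lemma~\ref{cusp_tech_lemma1}, we had that $\fr l=Z_{\fr g}(s)$.'' So the costalk identity is only available for a parabolic adapted to $y$, not for the given $P$ uniformly over $\fr l$, and your support bound $\supp\RRes_P^G\scrA\subseteq\overline\scrO\cap\fr l$ is not justified this way. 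The paper avoids this entirely: it reads off $\supp\LRes_P^G\scrA\subseteq\pi(\overline\scrO\cap\fr p)$ straight from the definition $\LRes=\pi_! i^*$ and then appeals to \cite[Lemma~3.9]{Zhou2} for the fact that $\pi(\scrO\cap\fr p)$ meets only finitely many $L$-orbits. Your conclusion is in fact salvageable---a one-parameter contraction through $Z(L)$ shows $\pi(x)\in\overline{{}^Gx}$ for $x\in\fr p$, whence $\pi(\overline\scrO\cap\fr p)\subseteq\overline\scrO\cap\fr l$---but that argument, not Lemma~\ref{cusp_tech_lemma1}, is what you need.
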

	\begin{proof}
		\textbf{(1): } We will only do the case of $\LRes$. The case of $\RRes$ can be obtained using similar methods. Let $\scrK$ be an irreducible subquotient of $\LRes \scrA$. By routine homological algebra arguments, we may assume without loss of generality, $\scrK$ is an irreducible subquotient of $\Ind \scrA_0$ with $\scrA_0$ irreducible and orbital.
		It suffices to show that $S = \supp (\LRes \scrA_0)$ is contained in the union of finitely many $L$-orbits of $\fr{l}$.
		Let $S_0 = \supp (\scrA_0)$. We can compute an estimate of the support $S$ of a restricted sheaf by direct means.
		\begin{align}
			S &= \supp (\LRes \scrA_0) \notag \\
			&= \supp (\For_L^P \circ \pi_! \circ i^* \circ \For_P^G \scrA_0) \notag \\
			&= \supp (\pi_! \circ i^* \circ \For_P^G \scrA_0) \notag \\
			&\subseteq \pi \left(\supp (i^* \For_P^G \scrA_0) \right) \tag{support of pushforward}\\
			&\subseteq \pi \left(  i^{-1} \left( \supp (\For_P^G \scrA_0 )\right) \right)  \tag{support of pullback}\\
			&= \pi (i^{-1} (S_0)) \notag \\
			&= \pi (S_0 \cap \fr{p})
		\end{align}
		It then suffices to show that $\pi (S_0 \cap \fr{p})$ is contained in the union of finitely many $L$-orbits in $\fr{l}$. 
		Since $\scrA_0$ is orbital, it is contained in the union of finitely many $G$-orbits on $\fr{g}$. 
		It then suffices to show that for any $G$-orbit $\scrO$ in $\fr{g}$, the set $\pi (\scrO \cap \fr{p})$ is the union of finitely many $L$-orbits in $\fr{l}$. 
		This is proved in Lemma 3.9 of \cite{Zhou2}.

		\textbf{(2): } Let $\scrK$ be an irreducible subobject of $\Ind \scrB$. By routine homological algebra arguments, we may assume without loss of generality, $\scrK$ is an irreducible subobject of $\Ind \scrB_0$ with $\scrB_0$ irreducible and orbital.
		It suffices to show that $S = \supp (\Ind \scrB_0)$ is contained in a union of finitely many $G$-orbits. 
		Let $S_0 = \supp (\scrB_0)$. We can compute an estimate of the support of an induced sheaf by direct means.
		\begin{align}
			S &= \supp (\Ind \scrB_0) \notag \\
			&= \supp (\Av_{P!}^G \circ i_! \circ \pi^* \circ \Av_{L!}^P \scrB_0) \notag \\
			 &\subseteq G \cdot \supp (i_! \pi^* \Av_{L!}^P \scrB_0) \tag{Equivariance} \\
			 &\subseteq G \cdot i\left(\supp (\pi^* \Av_{L!}^P \scrB_0 ) \right) \tag{Support of embedding} \\
			 &= G \cdot i\left(\pi^{-1} (\supp (\Av_{L!}^P \scrB_0 )) \right) \tag{Support of pullback }\\
			 &\subseteq G \cdot i \left(\pi^{-1} (S_0)\right) \tag{Equivariance }\\
			 &= G \cdot (\pi^{-1} (S_0)) \notag
		\end{align}
		Hence, $S$ is contained in the union of $G$-orbits of $\fr{g}$ which meet $\pi^{-1} (S_0)$.
		It will then suffice to show that $\pi^{-1} (S_0)$ is contained in a finite union of $G$-orbits in $\fr{g}$.
		Following the approach given in Proposition \ref{orbital_and_radon}, we only must check that the set $X^{\textnormal{ss}}$ of semisimple parts of the elements in $\pi^{-1} (S_0)$ is contained in a single $G$-orbit in $\fr{g}$.
		Since $\scrB_0$ is orbital, the set $X_0^{\textnormal{ss}}$ of semisimple parts of the elements in $S_0$ is contained in a single $L$-orbit of some $y$ in $\fr{l}$. We clearly have that $X^{\textnormal{ss}} \subseteq \pi^{-1} (X_0^{\textnormal{ss}})$. It then suffices to check that the set of semisimple elements in $\pi^{-1} (X_0^{\textnormal{ss}})$ is contained in a single $P$-orbit of $\fr{p}$.
		This is straightforward since for any element $x \in X_0^{\textnormal{ss}}$, we have that $\pi^{-1} (x) = x + \fr{u} = {}^U x$. As such, $\pi^{-1} (X_0^{\textnormal{ss}}) = \text{Ad}_U (X_0^{\textnormal{ss}})$. Thus,
		\[ \pi^{-1} (X^{\textnormal{ss}}_0) = \Ad_U (X^{\textnormal{ss}}_0) \subseteq \Ad_U (\Ad_L y) = G\cdot y\]
		which completes the proof of \textbf(2).

		\textbf{(3), (4): } These follow immediately after taking the Fourier transform of statements (1) and (2) (respectively).
	\end{proof}

	\begin{lemma}\label{admissible_tech_lemma}
		Let $P$ be a parabolic subgroup of $G$ with Levi decomposition $P = L \ltimes U$ with Lie algebras $\fr{p}, \fr{l}, \fr{u}$ respectively. 
		If $\scrB$ is an admissible sheaf in $\Adm_L (\fr{l}, \E)$, then any irreducible subobject $\scrA$ of $\Ind_P^G \scrB$ is admissible.
	\end{lemma}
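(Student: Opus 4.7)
The plan is to combine the definition of admissibility with transitivity of induction (Lemma \ref{ind_res_transitivity}). Since $\scrB$ is admissible (and hence simple by Definition \ref{defn:admissible}), I would first find a parabolic subgroup $Q' = M \ltimes U'$ of $L$ and a cuspidal sheaf $\scrC \in \Cusp_M (\fr{m}, \E)$ such that $\scrB$ appears as a simple subquotient of $\Ind_{Q' \subset L}^L \scrC$.

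Setting $Q = Q' U$, where $U$ is the unipotent radical of $P$, a direct check shows that $Q$ is a parabolic subgroup of $G$ with Levi decomposition $Q = M \ltimes (U'U)$, so Lemma \ref{ind_res_transitivity} furnishes a natural isomorphism $\Ind_{Q \subset G}^G \cong \Ind_{P \subset G}^G \circ \Ind_{Q' \subset L}^L$. Invoking t-exactness of parabolic induction on perverse sheaves, $\Ind_P^G$ sends the subquotient realization of $\scrB$ inside $\Ind_{Q'}^L \scrC$ to a subquotient realization of $\Ind_P^G \scrB$ inside $\Ind_P^G \Ind_{Q'}^L \scrC = \Ind_Q^G \scrC$. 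Thus any simple subobject $\scrA$ of $\Ind_P^G \scrB$ is in particular a simple subquotient of $\Ind_Q^G \scrC$, and since $\scrC$ is cuspidal on a Levi of a parabolic subgroup of $G$, Definition \ref{defn:admissible} immediately yields $\scrA \in \Adm_G (\fr{g}, \E)$, as desired.

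The main obstacle is justifying the t-exactness of $\Ind_P^G$ used above, which is not explicitly recorded earlier in the paper. If this is not directly available, I would work around it by applying Lemma \ref{hom_alg_reduction_to_simple} to the exact functor $\Ind_{Q' \subset L}^L$ (reducing to the case of a simple cuspidal source) and then running a direct composition-factor chase through the long exact sequence of perverse cohomology attached to the short exact sequence presenting $\scrB$ as a subquotient of $\Ind_{Q'}^L \scrC$. The combinatorial heart of the argument is the elementary observation that any parabolic of $L$ extends by the unipotent radical of $P$ to a parabolic of $G$ sharing the same Levi, which is precisely what transitivity of induction exploits.
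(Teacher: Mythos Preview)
Your proposal is correct and follows essentially the same route as the paper: realize $\scrB$ as a simple constituent of $\Ind_{Q'}^{L}\scrC$ for some cuspidal $\scrC$ on a Levi $M$ of $L$, extend $Q'$ to a parabolic $Q$ of $G$ with the same Levi, apply transitivity of induction (Lemma~\ref{ind_res_transitivity}), and use $t$-exactness of $\Ind_P^G$ to conclude that $\scrA$ is a composition factor of $\Ind_Q^G\scrC$. The paper invokes exactly these two ingredients; your only substantive deviation is that you write ``subquotient'' where the paper writes ``subobject'' --- your phrasing is the one directly licensed by Definition~\ref{defn:admissible} at this stage, while the paper's requires the later Lemma~\ref{lem:subobjs_suffices}, so if anything your version is cleaner. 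The $t$-exactness of $\Ind$ is used without proof in the paper as well, so your flagged concern is shared but not an obstacle.
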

	\begin{proof}
		Let $\scrA$ be an irreducible subobject of $\Ind_{L \subset P}^G \scrB$. 
		Since $\scrB$ is admissible, we can find a parabolic subgroup $Q$ of $L$ with Levi decomposition $Q = M \ltimes K$ (with Lie algebras $\fr{q} = \fr{m} \oplus \fr{k}$) and a cuspidal sheaf $\scrC$ on $\fr{m}$ such that $\scrB$ is an irreducible subobject of $\Ind_{K \subset Q}^L \scrB$

		From the $t$-exactness of $\Ind$, $\scrA$ is an irreducible subobject of $\Ind_P^G \Ind_Q^L \scrC$. 
		By Lemma \ref{ind_res_transitivity}, we have that $\scrA$ is an irreducible subobject of $\Ind_Q^G \scrC$, and hence $\scrA$ is admissible.
	\end{proof}

	\vspace{\topsep}
	\begin{midsecproof}{Theorem \ref{admissible_and_cs_are_the_same}}
		The proof that admissible sheaves are character sheaves follows immediately from Proposition \ref{res_and_ind_of_cs} and Theorem \ref{reductive_cusp_are_cs}.
		
		For the converse, we will provide a straightforward proof that models that of Mars and Springer, Theorem 9.3.2 \cite{MS}. 
		We argue by induction on the dimension of $\fr{g}$. The base case of $\dim \fr{g} = 0$ is obviously satisfied.
		If $\scrA$ is cuspidal, then there is nothing to show. By Lemma \ref{equiv_defn_for_cs_factorization}, we then may assume that there is some parabolic subgroup $P$ of $G$ such that $\RRes_P^G \scrA \neq 0$.
		There is then some irreducible subobject $\scrB \hookrightarrow \RRes_P^G \scrA$. By Proposition \ref{res_and_ind_of_cs}, we have that $\scrB$ is a character sheaf.
		Since $\RRes$ is a right adjoint to $\Ind$, we must have that
		\[ \Hom (\scrB, \LRes_P^G \scrA ) = \Hom (\Ind_P^G \scrB, \scrA) \neq 0\]
		where these hom-sets should be interpreted as between perverse sheaves. 
		Let $q : \Ind_P^G \scrB \to \scrA$ be any nonzero map. Since $\scrA$ is irreducible, we must have that $q$ is a quotient map.
		By induction, we can find a parabolic subgroup $Q$ of $L$ with Levi decomposition $Q = M \ltimes V$ (with Lie algebra $\fr{q} = \fr{m} \oplus \fr{v}$) and a cuspidal sheaf $\scrC$ on $\fr{m}$ such $\scrB$ is a subquotient of $\Ind_{Q}^P \scrC$.
		Let $P'$ be a parabolic subgroup in $G$ with Levi factor $M$. By Lemma \ref{ind_res_transitivity}, we have that $\Ind_P^G \scrB$ is a subquotient of $\Ind_{P'}^G \scrC$.
		Therefore, $\scrA$ is a composition factor of $\Ind_{P'}^G \scrC$ and hence is admissible.
	\end{midsecproof}

	\subsection{Lusztig Stratification}
	Let $\scrN$ be the nilpotent cone on $\fr{g}$. It is well known that $\scrN$ is closed under the $G$-action on $\fr{g}$. 
	Its orbits $\scrO_\lambda$ provide a stratification on $\scrN$. Let $P = L \ltimes U$ be a parabolic subgroup of $G$ with the corresponding Levi subgroup $L$. Let $\fr{l}$ be the Lie algebra corresponding to $L$.
	Denote the regular part of the center $Z(\fr{l})$ by $Z_r (\fr{l})$.

	\begin{definition}\label{def:lusztig_strata}
		Let $L$ be a Levi subgroup of $G$ with Lie algebra $\fr{l}$. Let $\scrO$ be a nilpotent orbit of $\fr{l}$. Define locally closed varieties,
		\[S_{\fr{l}, \scrO} = {}^G (Z_r (\fr{l}) + \scrO) \]
		where $Z_r (\fr{l})$ denotes the regular part of the center $Z (\fr{l})$, and is given by $Z_r (\fr{l}) = \{ x\in \fr{l}, Z_g (x) = \fr{l} \}$. 
		We call the subvarieties $S_{\fr{l}, \scrO}$ the \textbf{Lusztig strata} of $\fr{g}$. 
	\end{definition}

	Let $\Lambda$ be the collection of pairs $(\fr{l}, \scrO)$ where $\fr{l}$ is a Levi subalgebra and $\scrO$ is a nilpotent orbit of $\fr{l}$. There is a natural action of $G$ on $\Lambda$ by conjugation of both factors.

	\begin{lemma}[\cite{Lus95}]\label{lem:lusztig_strata_is_stratification}
		The Lusztig strata provide a stratification for $\fr{g}$. In particular,
		\[ \fr{g} = \bigsqcup_{(\fr{l}, \scrO) \in \Lambda/G} S_{\fr{l}, \scrO}\]
	\end{lemma}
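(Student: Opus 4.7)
The strategy is the standard one via Jordan decomposition. Given $x \in \fr{g}$, write $x = s + n$ with $s$ semisimple, $n$ nilpotent, and $[s,n] = 0$. Under the standing hypothesis on $p$ (Section \ref{notation}), the centralizer $\fr{l} := Z_{\fr{g}}(s)$ is a Levi subalgebra of $\fr{g}$. By definition, $s$ lies in $Z_r(\fr{l})$, since $Z_{\fr{g}}(s) = \fr{l}$; and since $n$ commutes with $s$, we have $n \in Z_{\fr{g}}(s) = \fr{l}$, with $n$ nilpotent, so $n$ lies in some $L$-orbit $\scrO$ on the nilpotent cone of $\fr{l}$. Then $x = s + n \in Z_r(\fr{l}) + \scrO \subseteq S_{\fr{l}, \scrO}$, which establishes that the Lusztig strata cover $\fr{g}$.

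For disjointness, I would argue as follows. Suppose $x \in S_{\fr{l}, \scrO} \cap S_{\fr{l}', \scrO'}$. Choose $g, g' \in G$ and write $x = \Ad_g(z+n) = \Ad_{g'}(z' + n')$ with $z \in Z_r(\fr{l})$, $n \in \scrO$, $z' \in Z_r(\fr{l}')$, $n' \in \scrO'$. Since $Z_r(\fr{l}) \subseteq Z(\fr{l})$, $z$ commutes with $n$; likewise for $z', n'$. Because $\Ad_g$ preserves semisimplicity and nilpotency, $\Ad_g z + \Ad_g n$ and $\Ad_{g'} z' + \Ad_{g'} n'$ are both Jordan decompositions of $x$. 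By uniqueness of the Jordan decomposition, $\Ad_g z = \Ad_{g'} z'$ and $\Ad_g n = \Ad_{g'} n'$. Setting $h = (g')^{-1} g$ yields $\Ad_h z = z'$ and $\Ad_h n = n'$, and then
\[ \Ad_h \fr{l} = \Ad_h Z_{\fr{g}}(z) = Z_{\fr{g}}(\Ad_h z) = Z_{\fr{g}}(z') = \fr{l}', \]
while $\Ad_h$ conjugates $L$ to $L'$ and sends $n$ to $n'$, so $\Ad_h \scrO = \scrO'$. Hence $h \cdot (\fr{l}, \scrO) = (\fr{l}', \scrO')$ in $\Lambda$, proving that the two pairs represent a single class in $\Lambda/G$.

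The proof is essentially formal once Jordan decomposition and the structure of Levi subalgebras are in hand. The one nontrivial ingredient — and the place where the proof would break in bad characteristic — is the claim that $Z_{\fr{g}}(s)$ is a Levi subalgebra for every semisimple $s$; this is guaranteed by our assumption that $p$ is very good or $G$ is of type $A_n$. Together with the inclusion $Z_r(\fr{l}) \subseteq Z(\fr{l})$ (needed to ensure $z + n$ really is a Jordan decomposition) and uniqueness of Jordan decomposition, this is all that the argument requires.
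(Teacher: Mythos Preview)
The paper does not give its own proof of this lemma; it is simply cited from \cite{Lus95}. Your argument via Jordan decomposition is correct and is the standard one. One small point you leave implicit: for the disjointness direction you need that any $z \in Z_r(\fr{l})$ is semisimple (so that $z+n$ really is a Jordan decomposition), which holds because $Z(\fr{l})$ lies in a Cartan subalgebra of $\fr{g}$. With that noted, your proof establishes exactly the displayed disjoint-union statement; the further claim that the $S_{\fr{l},\scrO}$ are locally closed and that closures are unions of strata (the word ``stratification'') is also part of the cited result but is not addressed in your proposal, nor is it proved in the present paper.
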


	It is useful to define varieties related to a Lusztig strata. Let $\fr{u}_P$ denote the Lie algebra of the unipotent radical of $P$. We set
	\[\tilde{S}_{\fr{l}, \scrO} = G \times^L (Z_r (\fr{l}) + \scrO),  \]
	\[T_{\fr{l}, \scrO} = {}^G (Z_r (\fr{l}) + \scrO + \fr{u}_P), \hspace{1cm} \tilde{T}_{\fr{l}, \scrO} = G \times^P (Z_r (\fr{l}) + \scrO + \fr{u}_P ).\]
	If the choice of $G$ is not clear, we may also write $S_{\fr{l}, \scrO}^G$ for $S_{\fr{l}, \scrO}$ (and likewise for the other related varieties).
	One can observe that $\overline{S_{\fr{l}, \scrO}} = T_{\fr{l}, \scrO}$. The adjoint action of $G$ on $\fr{g}$ gives rise to morphisms,
	\[\varpi_{\fr{l}, \scrO} : \tilde{S}_{\fr{l}, \scrO} \to S_{\fr{l}, \scrO}, \hspace{1cm} \pi_{\fr{l}, \scrO} : \tilde{T}_{\fr{l}, \scrO} \to T_{\fr{l}, \scrO}.\]
	The natural morphism $G \times^L \fr{l} \to G \times^P \fr{p}$ gives rise to an open embedding $\tilde{i} : \tilde{T}_{\fr{l}, \scrO} \to \tilde{S}_{\fr{l}, \scrO}$.
	These varieties and their maps fit into the following cartesian square:

	{ \centering
	\begin{tikzcd}
		{\tilde{T}_{\fr{l}, \scrO}} \arrow[r, "\tilde{i}", hook] \arrow[d, "{\varpi_{\fr{l}, \scrO}}"'] & {\tilde{S}_{\fr{l}, \scrO}} \arrow[d, "{\pi_{\fr{l}, \scrO}}"] \\
		{T_{\fr{l}, \scrO}} \arrow[r, "i", hook]                                                        & {S_{\fr{l}, \scrO}}                                           
		\end{tikzcd}
	\par }

	For a pair $(\fr{l}, \scrO) \in \Lambda$, let $j_{\fr{l}, \scrO}: Z_r (\fr{l}) + \scrO \hookrightarrow \fr{g}$ and $k_{\fr{l}, \scrO} : S_{\fr{l}, \scrO} \hookrightarrow \fr{g}$ denote the obvious inclusions.

	In the ordinary theory of character sheaves over reductive groups \cite{CS1} or reductive Lie algebras \cite{M}, the Lusztig stratification provides a sufficient stratification for describing the character sheaves.
	The benefit of this perspective on character sheaves is that it provides a concrete way to parameterize the character sheaves and reduces the work of describing them to explaining the topology of the strata.

	A stronger version of constructibility along Lusztig strata, called quasi-admissibility, was introduced by Mirković \cite{M}. We will use this formulation to prove constructibility.

	\begin{definition}\label{def:quasiadmissible}
		A perverse sheaf $\scrA \in \Perv_G (\fr{g}, \E)$ is said to be \textbf{quasi-admissible} if for all $(\fr{l}, \scrO) \in \Lambda$ the simple constituents of $j_{\fr{l}, \scrO}^! \scrA$ are of the form $\scrL\vert_{Z_r (\fr{l})} \boxtimes \scrE$ where $\scrL$ is a character sheaf on $Z (\fr{l})$ and $\scrE \in \Loc_L (\scrO, \E)$.
	\end{definition}

	\begin{lemma}\label{cusp_tech_lemma2}
		Let $P = L \ltimes U$ be a parabolic subgroup with Levi decomposition (with Lie algebras $\fr{p} = \fr{l} \oplus \fr{u}$). Pick a nilpotent orbit $\scrO$ of $\fr{l}$.
		Let $j_{\fr{l}, \scrO} : Z_r (\fr{l}) + \scrO \to \fr{l}$ denote the obvious inclusion.
		Then for any $\scrA \in \Perv_G (\fr{g}, \E)$,
		\[ (j_{\fr{l},\scrO}')^! \RRes_P^G \scrA = j_{\fr{l},\scrO}^! \scrA [2\dim U].\]
	\end{lemma}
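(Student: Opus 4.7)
The plan is to mirror the proof of Lemma \ref{cusp_tech_lemma1}, replacing the point $\{x\}$ by the locally closed subvariety $V := Z_r(\fr{l}) + \scrO \subset \fr{l}$. The essential input remains Lemma \ref{orbit_calc_lemma}(1): every $v = s + n \in V$ has $s \in Z_r(\fr{l})$, which by definition means $Z_\fr{g}(s) = \fr{l}$, so $^U v = v + \fr{u}$ for each such $v$. I first form the cartesian square with horizontal inclusions $i_V : V + \fr{u} \hookrightarrow \fr{p}$ and $j_V : V \hookrightarrow \fr{l}$ and vertical projections given by the restriction $\pi_V$ of $\pi : \fr{p} \to \fr{l}$ to $V + \fr{u} = \pi^{-1}(V)$. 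Applying the base change $j_V^! \pi_* \cong \pi_{V *} i_V^!$ to $\RRes_P^G \scrA \cong \pi_* i^! \scrA$ (suppressing forgetful functors) yields $j_V^! \RRes_P^G \scrA \cong \pi_{V *}\, \iota^! \scrA$, where $\iota := i \circ i_V : V + \fr{u} \hookrightarrow \fr{g}$. Let $s : V \hookrightarrow V + \fr{u}$ be the zero section, so that $\iota \circ s$ is the inclusion $V \hookrightarrow \fr{g}$.

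Next, I would show that $\iota^! \scrA$ is pulled back along $\pi_V$. Since $\scrA$ is $G$-equivariant, $\iota^! \scrA$ is $U$-equivariant. By Lemma \ref{orbit_calc_lemma}(1) the $U$-orbits in $V + \fr{u}$ are exactly the fibers of $\pi_V$, and the $U$-stabilizer of any $v = s + n \in V$ lies in $U \cap Z_G(s) = U \cap L = \{e\}$, so the action is free. Consequently the orbit map $U \times V \to V + \fr{u}$, $(u,v) \mapsto {}^u v$, is a $U$-equivariant isomorphism, and one obtains $\iota^! \scrA \cong \pi_V^* \mathcal{E}$ for some $\mathcal{E} \in D(V, \E)$.

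Finally, $\pi_V$ is a trivial vector bundle of rank $\dim U$. The projection formula together with the cohomological triviality of affine space gives $\pi_{V *} \pi_V^* \mathcal{E} \cong \mathcal{E}$; and since $s$ is a regular closed embedding of codimension $\dim U$, one has $s^! \pi_V^* \mathcal{E} \cong s^* \pi_V^* \mathcal{E}[-2\dim U] \cong \mathcal{E}[-2\dim U]$. Combining,
\[j_V^! \RRes_P^G \scrA \cong \pi_{V *}\, \iota^! \scrA \cong \mathcal{E} \cong (s^! \iota^! \scrA)[2\dim U] \cong (\iota \circ s)^! \scrA\, [2 \dim U],\]
which is the claim. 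The main obstacle is the identification $\iota^! \scrA \cong \pi_V^* \mathcal{E}$ in the previous paragraph; this is where Lemma \ref{orbit_calc_lemma}(1) enters essentially, ensuring both that the $U$-orbits fill each fiber of $\pi_V$ and that $U$ acts freely, so that the equivariant structure descends to $V$. Everything else reduces to standard base change and computations on a trivial vector bundle.
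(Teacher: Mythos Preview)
Your argument is correct and follows essentially the same route as the paper's proof: base change on the cartesian square over $V=Z_r(\fr{l})+\scrO$, then use $U$-equivariance together with Lemma~\ref{orbit_calc_lemma}(1) to identify $\iota^!\scrA$ as pulled back along the affine-bundle projection $\pi_V$, and finally collapse using contractibility of the fibers. The only cosmetic differences are that the paper works with the $(j,\tau)$-presentation of $\RRes$ through $\fr{g}/\fr{u}$ rather than your $(i,\pi)$-presentation through $\fr{p}$, and that you make the freeness of the $U$-action explicit (via $U\cap Z_G(s)=U\cap L=\{e\}$) where the paper just invokes unipotence; one small remark is that $s^!\pi_V^{*}\mathcal{E}\cong\mathcal{E}[-2\dim U]$ is really because $\pi_V$ is smooth (so $\pi_V^{*}\cong\pi_V^{!}[-2\dim U]$) and $\pi_V\circ s=\id$, not merely because $s$ is a regular embedding.
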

	\begin{proof}
		The proof will follow closely that of Lemma \ref{cusp_tech_lemma1}.
		Consider the commutative diagram where the center square is cartesian:

		{ \centering
		\begin{tikzcd}
			& Z_r (\fr{l}) + \scrO \arrow[dd, "j'"] \arrow[rd, "k", hook, bend left] \arrow[rrdd, "{j_{\fr{l}, \scrO}}", hook, bend left=49] \arrow[ld, "{j_{\fr{l}, \scrO}'}"', hook] &                                                                                                                                           &                           \\
\fr{l} \arrow[rd, "j"'] &                                                                                                                                                                          & Z_r (\fr{l}) + \scrO + \fr{u} = {}^U (Z_r (\fr{l}) + \scrO) \arrow[lu, "a", two heads, bend left] \arrow[rd, "{j_{\fr{l}, \scrO}''}", hook] &                           \\
			& \fr{g}/\fr{u}                                                                                                                                                            &                                                                                                                                           & \fr{g} \arrow[ll, "\tau"]
\end{tikzcd}
		\par }

		Note that $Z_r (\fr{l}) + \scrO + \fr{u} = {}^U (Z_r (\fr{l}) + \scrO)$ follows from Lemma \ref{orbit_calc_lemma} (1).

		We can now perform a proper base change to obtain
		\begin{align*}
			(j_{\fr{l}, \scrO}')^! \RRes_P^G \scrA &\cong (j_{\fr{l}, \scrO}')^! j^! \tau_* \scrA \\
			&\cong (j')^!  \tau_* \scrA \\
			&\cong  a_* (j_{\fr{l}, \scrO}'')^! \scrA
		\end{align*}

		Since $(j_{\fr{l}, \scrO}'')^! \scrA$ is $U$-equivariant, it is locally constant on $U$-orbits. And so since $U$ is unipotent, we have an isomorphism
		\[ (j_{\fr{l}, \scrO}'')^! \scrA \cong a^! k^! (j_{\fr{l}, \scrO}'')^! \scrA \cong a^! j_{\fr{l}, \scrO}^! \scrA \].
		Finally, applying $a_*$ to both sides of the above equation finishes the proof.
		\begin{align*}
			a_{*} (j_{\fr{l}, \scrO}'')^! \scrA &\cong a_* a^! j_{\fr{l}, \scrO}^! \scrA \\
			&\cong a_{*} a^* j_{\fr{l}, \scrO}^! \scrA [2\dim U] \\
			&\cong j_{\fr{l}, \scrO}^! \scrA [2\dim U]
		\end{align*}
		where $a_{*} a^* \cong \text{Id}$ follows from $\fr{u}$ being contractible.
	\end{proof}

	\begin{lemma}\label{lem:cs_are_qad}
		Every character sheaf $\scrA$ is quasi-admissible.
	\end{lemma}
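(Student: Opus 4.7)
The proof proceeds by reducing the statement to a fact about character sheaves on the Levi subalgebra and then exploiting the factored structure of such sheaves. Choose a parabolic subgroup $P = L \ltimes U$ of $G$ whose Levi factor has Lie algebra $\fr{l}$. By Lemma \ref{cusp_tech_lemma2},
\[ j_{\fr{l},\scrO}^! \scrA \cong (j'_{\fr{l},\scrO})^! \RRes_P^G \scrA \, [-2 \dim U], \]
where $j'_{\fr{l},\scrO} : Z_r(\fr{l}) + \scrO \hookrightarrow \fr{l}$. Thus it suffices to understand the simple perverse subquotients of $(j'_{\fr{l},\scrO})^! \RRes_P^G \scrA$.

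By Proposition \ref{res_and_ind_of_cs}(3), the composition factors of each ${}^p H^i \RRes_P^G \scrA$ are character sheaves on $\fr{l}$. Since character sheaves form a Serre subcategory and $(j'_{\fr{l},\scrO})^!$ is a triangulated functor, it is enough to compute $(j'_{\fr{l},\scrO})^! \scrB$ when $\scrB$ is a simple character sheaf on $\fr{l}$. By Lemma \ref{equiv_defn_for_cs_factorization}, any such $\scrB$ is $\boxtimes$-factored: $\scrB = \scrL \boxtimes \scrB'$, where $\scrL$ is a character sheaf on $Z(\fr{l})$ and $\scrB'$ is a character sheaf on $[\fr{l},\fr{l}]$.

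Under the decomposition $\fr{l} = Z(\fr{l}) \oplus [\fr{l},\fr{l}]$, the subvariety $Z_r(\fr{l}) + \scrO$ is precisely the product $Z_r(\fr{l}) \times \scrO$, and $j'_{\fr{l},\scrO}$ factors as the product of the open inclusion $Z_r(\fr{l}) \hookrightarrow Z(\fr{l})$ with the orbit inclusion $i_{\scrO,[\fr{l},\fr{l}]} : \scrO \hookrightarrow [\fr{l},\fr{l}]$. Consequently,
\[ (j'_{\fr{l},\scrO})^! (\scrL \boxtimes \scrB') \cong \scrL|_{Z_r(\fr{l})} \boxtimes (i_{\scrO,[\fr{l},\fr{l}]})^! \scrB' \]
up to a cohomological shift. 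Now $\scrB'$ is $L$-equivariant and $\scrO$ is a single $L$-orbit, so $(i_{\scrO,[\fr{l},\fr{l}]})^! \scrB'$ is an $L$-equivariant complex on a homogeneous $L$-space; its cohomology sheaves are therefore $L$-equivariant local systems, and the simple constituents of its perverse cohomology are shifts of simple local systems in $\Loc_L(\scrO, \E)$. Assembling these pieces, the simple constituents of $j_{\fr{l},\scrO}^! \scrA$ take the required form $\scrL|_{Z_r(\fr{l})} \boxtimes \scrE$.

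The main obstacle is the bookkeeping across non-$t$-exact functors: neither $\RRes_P^G$ nor $(j'_{\fr{l},\scrO})^!$ is exact, so one must track composition factors through perverse cohomology at each stage. This is made possible by two Serre-subcategory properties proved earlier: the category of character sheaves is closed under subquotients (so composition factors of ${}^p H^i \RRes_P^G \scrA$ stay in the class Proposition \ref{res_and_ind_of_cs} controls), and every simple character sheaf on a reductive Lie algebra is $\boxtimes$-factored (so the external product decomposition of $\scrB$ in Lemma \ref{equiv_defn_for_cs_factorization} reduces the analysis of $(j'_{\fr{l},\scrO})^!$ to separate factors on $Z(\fr{l})$ and $[\fr{l},\fr{l}]$).
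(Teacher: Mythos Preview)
Your proof is correct and follows essentially the same route as the paper: reduce via Lemma~\ref{cusp_tech_lemma2} to analyzing $(j'_{\fr{l},\scrO})^! \RRes_P^G \scrA$, invoke Proposition~\ref{res_and_ind_of_cs} so that the simple constituents of $\RRes_P^G \scrA$ are character sheaves on $\fr{l}$, apply the $\boxtimes$-factorization from Lemma~\ref{equiv_defn_for_cs_factorization}, and then split the $!$-pullback along the product $Z_r(\fr{l}) \times \scrO$. Your justification that $(i_{\scrO,[\fr{l},\fr{l}]})^! \scrB'$ has local-system cohomology (via $L$-equivariance on a single orbit) is in fact cleaner than the paper's phrasing, and your explicit remarks about tracking composition factors through the non-$t$-exact functors $\RRes_P^G$ and $(j'_{\fr{l},\scrO})^!$ fill in bookkeeping the paper leaves implicit.
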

	\begin{proof}
		Let $(\fr{l}, \scrO) \in \Lambda$. Let $\fr{p}$ be a parabolic subalgebra such that $\fr{l}$ is its Levi subalgebra. By Lemma \ref{cusp_tech_lemma2}, one has that
		\[j_{\fr{l}, \scrO}^! \RRes_{L \subset P}^G \scrA \cong j_{\fr{l}, \scrO}^! \scrA [2\dim U]\]

		By Proposition \ref{res_and_ind_of_cs}, we have that $\RRes_{L \subset P}^G \scrA \in \Perv_G^{\CS} (\fr{g}, \E)$. 
		Let $\scrA'$ be a simple constituent of $\RRes_{L \subset P}^G \scrA$. 
		By Lemma \ref{equiv_defn_for_cs_factorization}, one can $\boxtimes$-factor $\scrA'$ as $\scrA' \cong \scrL \boxtimes \scrB$ where $\scrL$ is a character sheaf on $Z(\fr{l})$ and $\scrB$ is a character sheaf on $[\fr{l}, \fr{l}]$.
		Let $j_{\scrO} : \scrO \hookrightarrow \fr{l}$ denote the inclusion. We then can $!$-pullback the $\boxtimes$-factorization to get
		\[j_{\fr{l}, \scrO}^! \scrA' = \scrL\vert_{Z_r(\fr{l})} \boxtimes j_{\scrO}^! \scrB\]
		since $Z_r (\fr{l})$ is open in $Z (\fr{l})$. Note that $\scrB$ is constructible with respect to the $G$-orbits on $[\fr{l}, \fr{l}]$, so we must have that $j_{\scrO}^! \scrB$ is a local system.
	\end{proof}

	As a consequence of Lemma \ref{lem:cs_are_qad}, we can see that character sheaves are constructible with respect to the Lusztig strata. In particular, every character sheaf arises as an IC-extension from them.
	In a similar vein, one might ask what Lusztig strata and local systems are allowed to give rise to cuspidal sheaves. The following lemma gives such a characterization.

	\begin{lemma}\label{lem:constr_of_cuspidal}
		Let $\scrC$ be a cuspidal sheaf on $\fr{g}$. 
		Then $\scrC = \IC (S_{\fr{g}, \scrO}, \scrE)$ for some distinguished nilpotent orbit $\scrO$ of $\fr{g}$ and some irreducible local system $\scrE \in \Loc_G (S_{\fr{g}, \scrO})$ such that $\scrE \cong \scrL^\circ \boxtimes \scrE'$ where $\scrL^\circ \in \Loc (Z_r (\fr{g}))$ and $\scrE' \in \Loc_G (\scrO)$. 
	\end{lemma}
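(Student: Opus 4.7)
The plan is to nail down the IC form and the factorization of $\scrE$ using quasi-admissibility, and then to force $\scrO$ to be distinguished by a reduction-to-a-proper-Levi argument.

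First, Theorem \ref{reductive_cusp_are_cs} gives that $\scrC$ is a character sheaf, and Lemma \ref{lem:cs_are_qad} gives that it is quasi-admissible. Since $\scrC$ is simple, it must be the IC-extension of an irreducible local system on the dense open Lusztig stratum in its support: $\scrC \cong \IC(S_{\fr{l}, \scrO}, \scrE)$ for some $(\fr{l}, \scrO) \in \Lambda$, and by the factored form for constituents in Definition \ref{def:quasiadmissible} one gets $\scrE \cong \scrL|_{Z_r(\fr{l})} \boxtimes \scrE'$ on the open stratum, with $\scrL$ a character sheaf on $Z(\fr{l})$. Now I invoke Lemma \ref{supp_of_res_small}: $\supp \scrC \subseteq Z(\fr{g}) + \scrN$, whose semisimple parts lie in $Z(\fr{g})$. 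Since for any proper Levi $\fr{l} \subsetneq \fr{g}$ one has $Z_r(\fr{l}) \cap Z(\fr{g}) = \emptyset$, the only possibility is $\fr{l} = \fr{g}$. Then $Z_r(\fr{g}) = Z(\fr{g})$, the Lusztig stratum is $S_{\fr{g}, \scrO} = Z(\fr{g}) + \scrO$, and $\scrE$ has exactly the asserted factorization $\scrL^\circ \boxtimes \scrE'$.

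Next, suppose for contradiction that $\scrO$ is not distinguished. Then some $y \in \scrO$ lies in a proper Levi subalgebra $\fr{l}$ of $\fr{g}$ (necessarily containing $Z(\fr{g})$). Choose $\fr{l}$ minimal with this property; by Bala-Carter theory (applicable under the characteristic hypotheses of \S\ref{notation}), the $L$-orbit $\scrO_0$ of $y$ in $\fr{l}$ is then distinguished in $[\fr{l}, \fr{l}]$, and $\scrO$ is its Lusztig-Spaltenstein induced orbit. My target is to contradict res-smallness. I form $\scrA_0 = \scrL \boxtimes \IC(\scrO_0, \scrE_0) \in \Perv_L(\fr{l}, \E)$ for an appropriately chosen irreducible local system $\scrE_0$ on $\scrO_0$, and consider $\Ind_P^G \scrA_0$ where $P$ has Levi $L$. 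A direct analysis of the image of $G \times^P (\scrO_0 + \fr{u})$ in $\fr{g}$ shows that $\Ind_P^G \scrA_0$ is supported on $Z(\fr{g}) + \overline{\scrO}$, and that its restriction to the open Lusztig stratum $S_{\fr{g}, \scrO}$ matches (up to shift) the local system $\scrL^\circ \boxtimes \scrE'$ for a suitable choice of $\scrE_0$. This exhibits $\scrC$ as a sub- or quotient-object of $\Ind_P^G \scrA_0$, contradicting condition (3) or (4) of Proposition \ref{prop:res_small_conds}.

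The main obstacle is the final identification step: in the modular setting we do not have the decomposition theorem (cf.\ Example \ref{ex:sl2_nuances}), so one cannot simply identify $\scrC$ as a direct summand of $\Ind_P^G \scrA_0$. To handle this I would use that $\Ind_P^G \scrA_0$ has character-sheaf composition factors by Proposition \ref{res_and_ind_of_cs}, is therefore quasi-admissible, and match local systems on the open stratum $S_{\fr{g}, \scrO}$ directly using adjunction to realize $\scrC$ in the socle or head. A cleaner alternative — likely the proof the paper has in mind — is to observe that $\scrB := \IC(\scrO, \scrE')$ is a cuspidal perverse sheaf on $[\fr{g}, \fr{g}]$ supported on the nilpotent cone (by the semisimple reduction of Theorem \ref{reductive_cusp_are_cs} together with Lemma \ref{cusp_and_support_equiv}), and then to invoke the classification of cuspidal perverse sheaves on $\scrN$ from the modular generalized Springer correspondence of \cite{AHJR1}, \cite{AHJR2}, \cite{AHJR3}, which directly pins $\scrO$ to be distinguished.
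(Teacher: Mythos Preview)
Your argument for the IC form and factorization is correct, but it takes a longer route than the paper. The paper simply reads off the $\boxtimes$-factorization $\scrC \cong \scrL \boxtimes \scrB$ from the \emph{definition} of cuspidal (Definition \ref{defn:cuspidal}), then notes that $\scrB$ is cuspidal on $[\fr{g},\fr{g}]$ (from the proof of Theorem \ref{reductive_cusp_are_cs}) and hence supported on $\scrN$ by Lemma \ref{cusp_and_support_equiv}; writing $\scrB \cong \IC(\scrO,\scrE')$ and $\scrL \cong \IC(Z_r(\fr{g}), \scrL^\circ)$ (since $Z_r(\fr{g})$ is open dense in the smooth variety $Z(\fr{g})$) gives $\scrC \cong \IC(S_{\fr{g},\scrO}, \scrL^\circ \boxtimes \scrE')$ immediately. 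Your path through Theorem \ref{reductive_cusp_are_cs}, Lemma \ref{lem:cs_are_qad}, and Lemma \ref{supp_of_res_small} recovers the same factored local system on the open stratum, but via structural results rather than the definition; it works, but it is heavier machinery than needed.

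On the distinguished claim: the paper does \emph{not} prove it --- it simply asserts that the orbit supporting a cuspidal $\scrB$ on $\scrN$ is distinguished. Your ``cleaner alternative'' (reducing to the nilpotent-cone cuspidals of \cite{AHJR1}, \cite{AHJR2}, \cite{AHJR3}) is exactly the intended justification. Your primary attempt, by contrast, has a real gap: the phrase ``appropriately chosen irreducible local system $\scrE_0$'' hides the whole difficulty. To show $\scrC$ is in the head or socle of $\Ind_P^G \scrA_0$ you need, at minimum, that $\scrE'$ occurs in the head or socle of the pushforward local system on $\scrO$, and there is no a priori reason a given $\scrE'$ arises this way from \emph{some} $\scrE_0$ on $\scrO_0$ --- that is essentially the content of an induction theorem one does not yet have. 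Drop that argument and go with the alternative.
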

	\begin{proof}
		The statement of the lemma is well-defined since $S_{\fr{g}, \scrO} = Z_r (\fr{g}) + \scrO$. 
		Since $\scrC$ is cuspidal, we can $\boxtimes$-factor it as $\scrC \cong \scrL \boxtimes \scrB$ where $\scrL$ is a character sheaf on $Z (\fr{g})$ and $\scrB$ is a sheaf on $[\fr{g}, \fr{g}]$.
		As we saw in the proof of Theorem \ref{reductive_cusp_are_cs}, $\scrB$ is also cuspidal. Hence, by Lemma \ref{cusp_and_support_equiv}, we have that $\scrB$ is supported on the nilpotent cone $\scrN$.
		Since $\scrB$ is irreducible, we must have that 
		\begin{equation}
			\scrB \cong \IC (\scrO,\scrE') \label{eq:qad_cs_and_cuspidal_1}
		\end{equation}
		where $\scrO$ is a distinguished nilpotent orbit in $\fr{g}$ and $\scrE' \in \Loc_G (\scrO)$.
		Also, one can observe that since $Z (\fr{g})$ is smooth and $Z_r (\fr{g})$ is dense in $Z (\fr{g})$, we have that 
		\begin{equation}
			\scrL \cong \IC (Z_r (\fr{g}), \scrL\vert_{Z_r (\fr{g})} [-\dim Z(\fr{g})]). \label{eq:qad_cs_and_cuspidal_2}
		\end{equation}
		We can then denote $\scrL^\circ = \scrL\vert_{Z_r (\fr{g})} [-\dim Z_(\fr{g})]$. 
		By combining the observations in \ref{eq:qad_cs_and_cuspidal_1} and \ref{eq:qad_cs_and_cuspidal_2}, we obtain an IC-description for $\scrC$:
		\[\scrC \cong \IC (S_{\fr{g}, \scrO}, \scrL^\circ \boxtimes \scrE').\]
	\end{proof}

	\subsection{Generation of Admissible Sheaves}

	In the definition of admissible sheaves, we take all composition factors of the parabolic induction of cuspidal sheaves. However, there may be significant overlap in the composition factors of the parabolic induction of two cuspidal sheaves. 
	In this section, we will propose an alternate definition of admissible that we will later see is in some sense minimal (Theorem \ref{decomp_of_ad_sheaves}).

	\begin{lemma}\label{lem:subobjs_suffices}
		If $\scrA$ is admissible, then there exists some Levi subgroup $L$ with parabolic $P$ and cuspidal sheaf $\scrC$ on $\fr{l}$ such that $\scrA$ is a quotient (alternatively, a subobject) of $\Ind_{P}^G \scrC$.
	\end{lemma}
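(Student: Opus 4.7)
The plan is to induct on $\dim \fr{g}$, combining the adjunctions $\Hom(\Ind_P^G \scrB, \scrA) \cong \Hom(\scrB, \RRes_P^G \scrA)$ and $\Hom(\scrA, \Ind_P^G \scrB) \cong \Hom(\LRes_P^G \scrA, \scrB)$ with the $t$-exactness of $\Ind$ and the transitivity of Lemma \ref{ind_res_transitivity}. The base case $\dim \fr{g} = 0$ is immediate, since then there are no proper parabolics, every simple perverse sheaf is cuspidal, and $\Ind_G^G$ is the identity functor.

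For the inductive step in the quotient case, let $\scrA$ be a simple admissible sheaf. By Theorem \ref{admissible_and_cs_are_the_same}, $\scrA$ is a character sheaf. If $\scrA$ is cuspidal, take $P = G$ and $\scrC = \scrA$. Otherwise, exactly as in the proof of Theorem \ref{admissible_and_cs_are_the_same} (using Lemma \ref{equiv_defn_for_cs_factorization} together with Proposition \ref{prop:res_small_conds}), there is a proper parabolic $P \subsetneq G$ with Levi $L$ such that $\RRes_P^G \scrA \neq 0$. Choose a simple subobject $\scrB \hookrightarrow \RRes_P^G \scrA$; by Proposition \ref{res_and_ind_of_cs}(3), $\scrB$ is a character sheaf on $\fr{l}$, hence admissible. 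The adjunction produces a nonzero morphism $\Ind_P^G \scrB \to \scrA$, and the simplicity of $\scrA$ upgrades this to a surjection.

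Applying the inductive hypothesis to $\scrB$ on $\fr{l}$, where $\dim \fr{l} < \dim \fr{g}$, yields a parabolic $Q$ of $L$ with Levi $M$, a cuspidal sheaf $\scrC$ on $\fr{m}$, and a surjection $\Ind_Q^L \scrC \twoheadrightarrow \scrB$. Since $\Ind_P^G$ is $t$-exact, applying it to this surjection and composing with $\Ind_P^G \scrB \twoheadrightarrow \scrA$ gives a surjection $\Ind_P^G \Ind_Q^L \scrC \twoheadrightarrow \scrA$. By the transitivity of induction, the left-hand side is canonically isomorphic to $\Ind_{Q'}^G \scrC$ for a suitable parabolic $Q' \subset G$ with Levi $M$, completing the quotient statement.

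The subobject statement is entirely parallel: in the non-cuspidal case, pick a proper parabolic $P$ with $\LRes_P^G \scrA \neq 0$, choose a simple quotient $\scrB$ of $\LRes_P^G \scrA$, use the adjunction $\Hom(\LRes, -) \cong \Hom(-, \Ind)$ together with simplicity of $\scrA$ to obtain an injection $\scrA \hookrightarrow \Ind_P^G \scrB$, apply the inductive hypothesis to produce $\scrB \hookrightarrow \Ind_Q^L \scrC'$ for some cuspidal $\scrC'$, then apply the $t$-exact functor $\Ind_P^G$ and invoke transitivity. The argument is genuinely straightforward once the two adjunctions, $t$-exactness, and transitivity are in hand; the only point that requires a nontrivial earlier result is the fact that subquotients of parabolic restrictions of character sheaves remain character sheaves, which is precisely what ensures the inductive hypothesis applies to $\scrB$.
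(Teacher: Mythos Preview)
Your proof is correct, but it takes a different route from the paper's. The paper argues directly: it defines
\[
S(\scrA) = \{L \subseteq G \text{ Levi} \mid \RRes_{L \subset P}^G \scrA \neq 0 \text{ for some parabolic } P \supseteq L\},
\]
picks a \emph{minimal} $L \in S(\scrA)$, and then observes that any simple subobject $\scrC \hookrightarrow \RRes_{L \subset P}^G \scrA$ is automatically res-small, since for any smaller Levi $M \subset L$ one has $\RRes_{M}^L \scrC \hookrightarrow \RRes_M^L \RRes_L^G \scrA \cong \RRes_M^G \scrA = 0$ by transitivity and minimality. Combined with Proposition~\ref{res_and_ind_of_cs}, $\scrC$ is a character sheaf and hence cuspidal, and adjunction finishes in one step.

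Your inductive argument descends one parabolic at a time, invoking the inductive hypothesis on the smaller Levi and then splicing surjections together via the $t$-exactness of $\Ind$ and transitivity. This is essentially the same descent used in the proof of Theorem~\ref{admissible_and_cs_are_the_same}, upgraded from ``subquotient'' to ``quotient'' by tracking surjections through the exact functor $\Ind_P^G$. It works, but it is longer and leans on Theorem~\ref{admissible_and_cs_are_the_same} as an input, whereas the paper's minimal-Levi argument is self-contained modulo Proposition~\ref{res_and_ind_of_cs}. The paper's approach has the conceptual advantage of locating the cuspidal $\scrC$ in one shot, which is also more in keeping with how one would actually find it in practice.
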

	\begin{proof}
		We will argue only the statement regarding quotients. The subobject statement can be derived by applying the obvious modifications. 
		Define a set
		\[S(\scrA) = \{ L \subseteq G \text{ Levi subgroup }\mid \RRes_{L \subseteq P}^G \scrA \neq 0  \text{ for some parabolic } P \text{ containing } L\}.\]
		We can give $S(\scrA)$ a partial order by inclusion of Levi subgroups.

		Let $L \in S(\scrA)$ be minimal with parabolic $P$ such that $\RRes_{L \subseteq P}^G \scrA \neq 0$. 
		We can then pick some $\scrC \in \Perv_L (\fr{l}, \E)$ such that $\scrC \hookrightarrow \RRes_{L \subseteq P}^G \scrA$.
		Let $Q$ be any parabolic subgroup of $L$ with Levi $M$. We can then find parabolic subgroup $P'$ in $G$ with Levi $M$ such that $P' \cap P = Q$. Now by transitivity of induction,
		\[\RRes_{M \subseteq Q}^L \RRes_{L \subset P}^G \scrA \cong \RRes_{M \subseteq P'}^G \scrA = 0\]
		by minimality of $L \in S(\scrA)$. In particular, $\scrC$ is res-small. 
		By Proposition \ref{res_and_ind_of_cs}, we have that $\scrC$ is a character sheaf; therefore, $\scrC$ is cuspidal.

		By adjunction, we then have a surjection $\Ind_{L \subseteq P}^G \scrC \twoheadrightarrow \scrA$. Since $\scrA$ is simple, this map is a quotient as desired.
	\end{proof}

	Lemma \ref{lem:subobjs_suffices} gives an alternative definition for admissible sheaves. 
	Namely, $\scrA \in \Perv_G (\fr{g}, \E)$ is admissible if and only if there exists some Levi subgroup $L$ (with Lie algebra $\fr{l})$ and a cuspidal sheaf $\scrC$ on $\fr{l}$ such that $\scrA$ is a subobject (alternatively, quotient) of $\Ind \scrC$.
	
	As a result, if $\scrA$ is admissible, we can find Levi subgroups $L_1, L_2$ (with Lie algebras $\fr{l}_1, \fr{l}_2$, respectively) and cuspidal sheaves $\scrC_1 \in \Perv_{L_1} (\fr{l}_1, \E)$ and $\scrC_2 \in \Perv_{L_2} (\fr{l}_2, \E)$ such that $\scrA$ is a subobject of $\Ind \scrC_1$ and a quotient of $\Ind \scrC_2$.
	A priori, it may not be the case that the Levis and cuspidals can be chosen such that $L_1 = L_2$ and $\scrC_1 = \scrC_2$. 
	We will spend the remainder of the section proving that we can make such a choice.

	\begin{proposition}\label{prop:subobjs_equals_subquots}
		Let $P = L \ltimes U$ be a parabolic subgroup with Levi decomposition. Let $\fr{p} = \fr{l} \oplus \fr{u}$ denote the corresponding Lie algebras.
		If $\scrC \in \Perv_L (\fr{l}, \E)$ is cuspidal, then the set of isomorphism classes of subobjects of $\Ind_P^G \scrC$ and the set of isomorphism classes of quotients of $\Ind_P^G \scrC$ are equal.
	\end{proposition}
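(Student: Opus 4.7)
The plan is to reduce the proposition to a fact about the endomorphism algebra $A := \End_{\Perv_G(\fr{g}, \E)}(\Ind_P^G \scrC)$, which I expect to be a symmetric Frobenius algebra, and then apply a Morita-type dictionary to translate between simple subobjects/quotients of $\Ind_P^G \scrC$ and simples in the socle/head of $A$.

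By adjunction, a simple admissible sheaf $\scrA$ is a subobject of $\Ind_P^G \scrC$ if and only if $\Hom(\LRes_P^G \scrA, \scrC) \neq 0$, and a quotient if and only if $\Hom(\scrC, \RRes_P^G \scrA) \neq 0$. The goal therefore reduces to showing that these two $\Hom$ conditions single out the same simple admissibles.

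The key computational step is a Mackey-type formula. Via proper base change on the diagram defining $\Ind$ together with a stratification of $G$ by $P$-$P$ double cosets, $\RRes_P^G \Ind_P^G \scrC$ should acquire a filtration whose graded pieces are inductions of restrictions of $w$-conjugates of $\scrC$, indexed by $w \in P \backslash G / P$. Cuspidality of $\scrC$ (Definition \ref{defn:cuspidal}) annihilates all such terms except those where $w$ normalizes $L$ and preserves the isomorphism class of $\scrC$, yielding $\RRes_P^G \Ind_P^G \scrC \cong \scrC^{\oplus \abs{W_\scrC}}$, where $W_\scrC := \{g \in N_G(L) : g \cdot \scrC \cong \scrC\}/L$ is the relative Weyl group of $\scrC$. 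An analogous computation for $\LRes_P^G \Ind_P^G \scrC$ gives the same answer, and adjunction then identifies $A$ with the group algebra $\E[W_\scrC]$.

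Since $W_\scrC$ is finite, $\E[W_\scrC]$ is a symmetric Frobenius algebra, so the simple modules appearing in the socle of its regular representation coincide as a set with those appearing in the head. The functor $\Hom(\Ind_P^G \scrC, -)$ should provide a Morita-type equivalence between the Serre subcategory of $\Perv_G(\fr{g}, \E)$ generated by the composition factors of $\Ind_P^G \scrC$ and the category of finite-dimensional $\E[W_\scrC]$-modules, under which $\Ind_P^G \scrC$ corresponds to the regular representation. Simple subobjects (resp. quotients) of $\Ind_P^G \scrC$ then correspond to simple constituents of the socle (resp. head) of $\E[W_\scrC]$, which coincide by the Frobenius property. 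The main obstacle is establishing the Mackey formula and the identification $A \cong \E[W_\scrC]$ rigorously in the modular setting, where the decomposition theorem is unavailable and the double coset analysis must be carried out by hand; the Morita step additionally requires $\Ind_P^G \scrC$ to be a progenerator for its Serre subcategory, which should also follow from the Mackey analysis.
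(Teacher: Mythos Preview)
Your approach via the endomorphism algebra and its Frobenius structure is genuinely different from the paper's, and while conceptually natural, it leans on inputs that are at least as hard as the proposition itself. The paper instead observes (Lemma~\ref{lem:when_is_ind_an_IC}) that $\Ind_P^G \scrC \cong \IC(S_{\fr{l},\scrO}^G, \varpi_{\fr{l},\scrO *} \tilde{\scrE})$ is the intermediate extension of a local system pushed forward along the finite Galois cover $\varpi_{\fr{l},\scrO} : \tilde{S}_{\fr{l},\scrO} \to S_{\fr{l},\scrO}$ of Lemma~\ref{lem:varpi_is_galois_covering}. Since $\IC$ preserves simple subobjects and simple quotients, the question reduces to one about local systems; biadjointness of $\varpi_{*}$ and $\varpi^{*}$ (as $\varpi$ is \'etale) together with semisimplicity of $\varpi^{*}\scrL$ for simple $\scrL$ (cited from \cite{AHJR2}) then gives the equality $\dim\Hom(\varpi_*\tilde{\scrE},\scrL)=\dim\Hom(\scrL,\varpi_*\tilde{\scrE})$ directly, with no Mackey formula and no analysis of $\End(\Ind_P^G\scrC)$ required.

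Your route has two genuine obstacles beyond the ones you flag. First, the Mackey formula (Theorem~\ref{mackeys_thm}, which in the paper appears \emph{after} this proposition) yields only a filtration; the jump to $\RRes\Ind\scrC \cong \scrC^{\oplus|W_\scrC|}$ requires an $\Ext^1(\scrC,\scrC)$-vanishing that you have not justified, and without it even $\dim A$ is not pinned down. Second, and more seriously, identifying $A$ with the group algebra $\E[W_\scrC]$ \emph{as an algebra} is a substantial theorem in its own right (in the nilpotent-cone setting this is one of the main results of \cite{AHJR2}), and the progenerator claim underlying your Morita step is essentially the full Springer-type correspondence for this induction series --- precisely the kind of result the paper is working toward, not something available as input. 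The paper's argument sidesteps all of this by exploiting the geometry of the Lusztig stratum; what it buys is a short, self-contained proof whose only external input is the semisimplicity of $\varpi^*\scrL$.
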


	In order to prove this proposition, we will first need to develop an understanding of when $\Ind_P^G \scrC$ is an IC-extension of a local system.
	Fix a cuspidal sheaf $\scrC \in \Perv_L (\fr{l}, \E)$. By Lemma \ref{lem:constr_of_cuspidal}, we have that $\scrC \cong \IC (S_{\fr{l}, \scrO}^L, \scrE)$ for some irreducible $\scrE \in \Loc_L (S_{\fr{l}, \scrO})$. 
	Define a group,
	\[ N_G (L, \scrO) = \{ n \in N_G (L) \mid n \cdot \scrO = \scrO \}.\]

	\begin{lemma}[\cite{Let05}, proof of Lemma 5.1.28]\label{lem:varpi_is_galois_covering}
		The morphism $\varpi_{\fr{l}, \scrO} : \tilde{S}_{\fr{l}, \scrO} \to S_{\fr{l}, \scrO}$ is a Galois covering with Galois group $N_G (L, \scrO)/L$.
	\end{lemma}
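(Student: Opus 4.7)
The plan is to verify three things in sequence: (a) there is a natural action of $N_G(L, \scrO)/L$ on $\tilde{S}_{\fr{l}, \scrO}$ over $S_{\fr{l}, \scrO}$; (b) the fiber of $\varpi_{\fr{l}, \scrO}$ at any point is a torsor for this group; (c) conclude the map is a Galois covering from smoothness of $\tilde{S}_{\fr{l}, \scrO}$ and freeness of the action.

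First I would set up the group action. For $n \in N_G(L, \scrO)$, the automorphism $\Ad_{n^{-1}}$ of $\fr{g}$ restricts to $\fr{l}$ (since $n$ normalizes $L$), preserves $Z(\fr{l})$ and $Z_r(\fr{l})$ (as the condition $Z_{\fr{g}}(z) = \fr{l}$ is invariant under conjugation by $n$), and preserves $\scrO$ by hypothesis. Hence $\Ad_{n^{-1}}$ restricts to an automorphism of $Z_r(\fr{l}) + \scrO$. I would then define
\[ n \cdot [g, y] := [gn, \Ad_{n^{-1}}(y)] \]
and check that this is well-defined on the quotient $G \times^L (Z_r(\fr{l}) + \scrO)$ and that elements of $L \subseteq N_G(L, \scrO)$ act trivially, so we obtain a genuine action of $\Gamma := N_G(L, \scrO)/L$. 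A direct computation shows $\varpi_{\fr{l}, \scrO}$ is $\Gamma$-invariant, and if $n \cdot [g, y] = [g, y]$ then the $G \times^L$ equivalence forces $n \in L$, so the action is free.

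Next, I would compute the fibers of $\varpi_{\fr{l}, \scrO}$ to show $\Gamma$ acts simply transitively. Fix $x \in S_{\fr{l}, \scrO}$ and choose $g_0 \in G$, $y_0 = z_0 + n_0 \in Z_r(\fr{l}) + \scrO$ with $x = \Ad_{g_0}(y_0)$. Any class $[g, y] \in \varpi^{-1}(x)$ satisfies $y = \Ad_{g^{-1}g_0}(y_0)$, and the requirement $y \in Z_r(\fr{l}) + \scrO$ is controlled by Jordan decomposition: since $z_0$ is semisimple, $n_0$ is nilpotent and $[z_0, n_0] = 0$, the semisimple and nilpotent parts of $y$ are $\Ad_{g^{-1}g_0}(z_0)$ and $\Ad_{g^{-1}g_0}(n_0)$. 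Requiring the first to lie in $Z_r(\fr{l})$ forces $\Ad_{g^{-1}g_0}(\fr{l}) = Z_{\fr{g}}(\Ad_{g^{-1}g_0}(z_0)) = \fr{l}$, so $g^{-1}g_0 \in N_G(L)$; requiring the second to lie in $\scrO$ then forces $g^{-1}g_0 \in N_G(L, \scrO)$. Hence $\varpi_{\fr{l}, \scrO}^{-1}(x) = g_0 N_G(L, \scrO)/L$, which is a $\Gamma$-torsor.

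Finally, I would assemble these into a Galois covering statement. The space $\tilde{S}_{\fr{l}, \scrO} = G \times^L (Z_r(\fr{l}) + \scrO)$ is smooth, being a $G$-homogeneous bundle over the smooth variety $Z_r(\fr{l}) + \scrO$; and $\Gamma$ is a finite group acting freely on this smooth variety, so the quotient $\tilde{S}_{\fr{l}, \scrO}/\Gamma$ exists and the projection is a finite étale Galois covering with group $\Gamma$. By the preceding fiber computation, the induced map $\tilde{S}_{\fr{l}, \scrO}/\Gamma \to S_{\fr{l}, \scrO}$ is a bijection of varieties; since its source is smooth and the target is irreducible of the same dimension, this bijection is an isomorphism, identifying $\varpi_{\fr{l}, \scrO}$ with the Galois quotient. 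The main obstacle is the Jordan-decomposition argument identifying the fibers: one must argue carefully that any element of $G$ carrying a regular semisimple representative of $Z_r(\fr{l}) + \scrO$ to another such element must normalize $L$, which is where Lemma \ref{orbit_calc_lemma}(2) (or rather its semisimple analogue via centralizers) enters.
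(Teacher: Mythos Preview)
The paper does not supply its own proof of this lemma; it simply records the statement and cites \cite{Let05}, proof of Lemma~5.1.28. Your argument is essentially the standard one found there: construct the free $N_G(L,\scrO)/L$-action on $\tilde{S}_{\fr{l},\scrO}$, identify the fibers of $\varpi_{\fr{l},\scrO}$ via the Jordan decomposition and the centralizer characterization $Z_{\fr{g}}(z)=\fr{l}$ for $z\in Z_r(\fr{l})$, and conclude. So in spirit you are reproducing exactly the argument the paper defers to.

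One genuine gap in your write-up concerns the last step. Over an algebraically closed field of positive characteristic, a bijective morphism between irreducible varieties of the same dimension need not be an isomorphism (Frobenius being the standard counterexample), so the sentence ``since its source is smooth and the target is irreducible of the same dimension, this bijection is an isomorphism'' is not sufficient as stated. To close this you should either (i) show directly that the differential of $\varpi_{\fr{l},\scrO}$ is an isomorphism at every point, using that $T_{[1,y]}\tilde{S}_{\fr{l},\scrO}\cong \fr{g}/\fr{l}\oplus T_y(Z_r(\fr{l})+\scrO)$ maps onto $[\fr{g},y]+T_y(Z_r(\fr{l})+\scrO)=T_yS_{\fr{l},\scrO}$ and then count dimensions; or (ii) first establish that $S_{\fr{l},\scrO}$ is smooth (a standard fact about Lusztig strata, implicit in Lemma~\ref{lem:lusztig_strata_is_stratification}) and then invoke Zariski's Main Theorem together with separability of $\varpi_{\fr{l},\scrO}$. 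Letellier takes route (i). Either fix is short, but without it the étaleness of $\varpi_{\fr{l},\scrO}$---and hence the ``covering'' part of ``Galois covering''---is not yet established.
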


	Consider the quotient map $q : G \times (Z_r (\fr{l}) + \scrO) \to \tilde{S}_{\fr{l}, \scrO}^G$.
	There is a unique $\tilde{\scrE} \in \Loc_G (S_{\fr{l}, \scrO}^G)$ such that $q^* \tilde{\scrE} \cong \underline{\E}_G \boxtimes \scrE$.
	By Lemma \ref{lem:varpi_is_galois_covering}, we have that $\varpi_{\fr{l}, \scrO *} \tilde{\scrE}$ is a local system on $S_{\fr{l}, \scrO}^G$.

	\begin{lemma}\label{lem:when_is_ind_an_IC}
		With the same notation as above, there exists a canonical isomorphism,
		\[\Ind_{L \subseteq P}^G \IC (S_{\fr{l}, \scrO}^L, \scrE) \cong \IC (S_{\fr{l}, \scrO}^G, \varpi_{\fr{l}, \scrO *} \tilde{\scrE})\]
	\end{lemma}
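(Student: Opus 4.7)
The plan is to unwind parabolic induction via the formula $\Ind_P^G \cong \mu_! \circ I_P^G \circ \pi^*[2\dim(G/P)]$ of Lemma \ref{lem:ind_equiv_version_of_ind}, compute $\Ind_P^G \scrC$ on the open Lusztig stratum by base change, and then invoke smallness of a generalized Springer-type map to upgrade this into an IC identification. As a first step I would bound the support: running the same chain of inclusions as in the proof of Proposition \ref{res_and_ind_of_cs}(2),
\[\supp(\Ind_P^G \scrC) \;\subseteq\; G \cdot \pi^{-1}(\supp \scrC) \;\subseteq\; G \cdot (Z(\fr{l}) + \overline{\scrO} + \fr{u}) \;\subseteq\; T_{\fr{l},\scrO} = \overline{S_{\fr{l},\scrO}^G},\]
and since $\Ind_P^G$ is $t$-exact, $\Ind_P^G \scrC$ is a $G$-equivariant perverse sheaf supported in $\overline{S_{\fr{l},\scrO}^G}$.

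Next I would compute the restriction to the open stratum. Over $Z_r(\fr{l}) + \scrO \subseteq \fr{l}$, $\scrC$ is a shift of the local system $\scrE$, so $\pi^* \scrC$ over $\pi^{-1}(Z_r(\fr{l}) + \scrO) = Z_r(\fr{l}) + \scrO + \fr{u}$ is (a shift of) a pullback of $\scrE$ along the trivial affine bundle $\pi$. Applying $I_P^G$ transports this to the open subset $\tilde{S}_{\fr{l},\scrO}$ of $\tilde{T}_{\fr{l},\scrO} \subseteq G \times^P \fr{p}$, recovering precisely the local system used to define $\tilde{\scrE}$. Base change along the cartesian square in the excerpt -- identifying $\mu^{-1}(S_{\fr{l},\scrO}^G) \cap \tilde{T}_{\fr{l},\scrO}$ with $\tilde{S}_{\fr{l},\scrO}$ -- then identifies $\Ind_P^G \scrC|_{S_{\fr{l},\scrO}^G}$ with the perverse shift of $\varpi_{\fr{l},\scrO *} \tilde{\scrE}$, which is a genuine local system by Lemma \ref{lem:varpi_is_galois_covering}.

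The main obstacle is to rule out sub and quotient objects of $\Ind_P^G \scrC$ supported on the boundary $T_{\fr{l},\scrO} \setminus S_{\fr{l},\scrO}^G$, which is what is needed to identify $\Ind_P^G \scrC$ with $\IC(S_{\fr{l},\scrO}^G, \varpi_{\fr{l},\scrO *} \tilde{\scrE})$. The key geometric input is smallness of the map $\pi_{\fr{l},\scrO} \colon \tilde{T}_{\fr{l},\scrO} \to T_{\fr{l},\scrO}$ extended to closures: properness follows from the projectivity of $G/P$ built into the $G \times^P$ twist, and the strict fiber-dimension estimate over deeper Lusztig strata is a standard generalized Springer-type computation that depends only on dimensions and is therefore insensitive to the characteristic of $\E$. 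Smallness then forces $(\pi_{\fr{l},\scrO})_*$ to send the IC on the closure of $\tilde{T}_{\fr{l},\scrO}$ (with coefficients pulled back from $\scrE$) to the IC extension on $\overline{T_{\fr{l},\scrO}}$ with coefficients $\varpi_{\fr{l},\scrO *} \tilde{\scrE}$. The canonical isomorphism of the lemma is then the map produced by the universal property of the intermediate extension from agreement on the open stratum; it is forced to be an isomorphism since the two sides match on $S_{\fr{l},\scrO}^G$ and neither admits boundary-supported sub or quotient objects.
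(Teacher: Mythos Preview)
Your proposal is correct and follows essentially the same route as the paper. Both arguments unwind $\Ind_P^G$ via Lemma~\ref{lem:ind_equiv_version_of_ind}, identify the result as $(\pi_{\fr{l},\scrO})_!$ of an $\IC$-sheaf on $\tilde{T}_{\fr{l},\scrO}$ with coefficients pulled back from $\scrE$, check agreement with $\varpi_{\fr{l},\scrO *}\tilde{\scrE}$ over the open stratum $S_{\fr{l},\scrO}^G$, and then invoke characteristic-independent dimension estimates (your smallness input, the paper's appeal to \cite[Proposition~5.1.33]{Let05} and \cite[Proposition~2.17]{AHJR1}) to verify the $\IC$-conditions on the boundary. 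The paper makes the intermediate identification $\Ind_P^G \scrC \cong (\pi_{\fr{l},\scrO})_! \IC(\tilde{T}_{\fr{l},\scrO},\hat{\scrE})$ explicit before invoking the estimates, whereas you first do the support bound and open-stratum computation separately; but the content is the same.
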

	\begin{proof}
		This result is stated for $\scrE \cong \underline{\E}_{Z_r (\fr{l})} \boxtimes \scrE'$ for $\scrE' \in \Loc_L (\scrO)$ in Proposition 2.17 of \cite{AHJR1} and more generally (in the characteristic 0 setting) in Proposition 5.1.33 \cite{Let05}.
		There is no obstacle in generalizing Proposition 2.17 of \cite{AHJR1} to allow for more general local systems on $Z_r (\fr{l})$. Nonetheless, we will go over the key ideas. First, we will use the description of $\Ind_P^G$ as given in Lemma \ref{lem:ind_equiv_version_of_ind}. 
		Let $\overline{q} : G \times (Z_r (\fr{l}) + \scrO + \fr{u}_P) \to \tilde{T}_{\fr{l}, \scrO}^G$ be the obvious quotient map.
		There is a unique local system $\hat{\scrE} \in \Loc_G (\tilde{T}_{\fr{l}, \scrO}^G)$ such that $\overline{q}^* \scrE \cong \underline{\E}_G \boxtimes \scrE \boxtimes \underline{\E}_{\fr{u}_P}$.
		Basic sheaf functor compatibilities yields the following:
		\begin{align*}
			\Ind_{L \subseteq P}^G \IC (S_{\fr{l}, \scrO}^L, \scrE) &\cong \mu_! I_P^G \pi^* \IC (S_{\fr{l}, \scrO}^L, \scrE) [2 \dim G/P] \\
			&\cong \pi_! I_P^G \IC (Z_r (\fr{l}) + \scrO + \fr{u}_P, \scrE \boxtimes \underline{\E}_{\fr{u}_P}) [2 \dim G/P- \dim \fr{u}_P] \\
			&\cong \pi_! \IC (T_{\fr{l}, \scrO}^G, \hat{\scrE})
		\end{align*}
		The last part is showing that
		\[\pi_! \IC (T_{\fr{l}, \scrO}^G, \hat{\scrE}) \cong \IC (S_{\fr{l}, \scrO}^G , \varpi_{\fr{l}, \scrO*} \tilde{\scrE}). \label{eq:when_is_ind_an_IC_1}\]
		By base change and the uniqueness of the defining conditions of $\hat{\scrE}$ and $\tilde{\scrE}$ it is straightforward to show that $(pi_! \IC (T_{\fr{l}, \scrO}^G, \hat{\scrE}))\mid_{S_{\fr{l}, \scrO}^G} \cong \varpi_{\fr{l}, \scrO*} \tilde{\scrE} [\dim S_{\fr{l}, \scrO}^G ]$.
		To prove that the left-hand side of \ref{eq:when_is_ind_an_IC_1} is indeed an IC-extension, one must check the conditions which uniquely characterize IC-sheaves [\cite{Let05}, 4.1.1].
		There is no obstacle in translating the characteristic 0 proof that these conditions hold (see \cite{Let05}, Proposition 5.1.33) into the modular setting.
		Further discussion on checking these conditions can be found in the proof of Proposition 2.17 of \cite{AHJR1}.
	\end{proof}

	\begin{midsecproof}{Proposition \ref{prop:subobjs_equals_subquots}}
		By Lemma \ref{lem:when_is_ind_an_IC}, and since $\IC$ preserves subobjects and quotients (Lemma 3.3.5, \cite{A1}), it suffices to show that the set of isomorphism classes of subobjects of $\varpi_{\fr{l}, \scrO *} \tilde{\scrE}$ and the set of isomorphism classes of quotients of $\varpi_{\fr{l}, \scrO *} \tilde{\scrE}$ are equal.
		Equivalently, for $\scrL \in \Loc_G (S_{\fr{l}, \scrO}^G)$ simple, we must show that
		\[\dim \Hom_{\Loc_G (S_{\fr{l}, \scrO})} (\varpi_{\fr{l}, \scrO*} \tilde{\scrE}, \scrL) = \dim \Hom_{\Loc_G (S_{\fr{l}, \scrO})} (\scrL, \varpi_{\fr{l}, \scrO*} \tilde{\scrE}) \label{eq:subobjs_equals_subquots_1}\]
		By \ref{lem:varpi_is_galois_covering}, we know that $\varpi_{\fr{l}, \scrO}$ is a Galois covering so $\varpi_{\fr{l}, \scrO*}$ and $\varpi_{\fr{l}, \scrO}^*$ are biadjoint. If we apply biadjointness to \ref{eq:subobjs_equals_subquots_1}, we obtain the following equivalent equation
		\[\dim \Hom_{\Loc_G (S_{\fr{l}, \scrO})} (\tilde{\scrE}, \varpi_{\fr{l}, \scrO}^* \scrL) = \dim \Hom_{\Loc_G (S_{\fr{l}, \scrO})} (\varpi_{\fr{l}, \scrO}^* \scrL, \tilde{\scrE}) \label{eq:subobjs_equals_subquots_2}.\]
		In the proof of Lemma 2.3 of \cite{AHJR2}, it is shown that $\varpi_{\fr{l}, \scrO}^* \scrL$ is semisimple. This observation along with the fact that $\tilde{\scrE}$ is simple shows that \ref{eq:subobjs_equals_subquots_2} holds.
	\end{midsecproof}

	\subsection{Examples}

	\begin{example}
	We will work out the character sheaf theory for $\sl_2 (\C)$ (with the analytic topology) and some of the related objects such as orbital sheaves and cuspidal sheaves.
	For the sake of notation, all sheaves will be in $\E$-coefficients with $\text{char} (\E) = \ell$. 
	
	\noindent\textbf{Geometry of Orbits}

	Let $\fr{g}= \sl_2 (\C)$ and $G = \SL_2 (\C)$. We will first start with the geometry of $G$-orbits and the orbital sheaves.
	Let $\lambda \in \C$, there is a semisimple orbit $\scrO_\lambda$ given by the $G$-orbit of $\begin{bmatrix} \lambda & 0 \\ 0 & -\lambda \end{bmatrix}$.
	This is nearly a bijection between $\C$ and the semisimple orbits, namely, there is a bijection
	\[ \lambda \leftrightarrow \scrO_\lambda \hspace{1cm} \C/{\pm 1} \leftrightarrow \{ \text{semisimple orbits of } G \} \]
	There is only one orbit of $G$ which is not semisimple, it consists of the unique nonzero nilpotent orbit $\scrQ$ given by the $G$-orbit of $\begin{bmatrix} 0 & -1 \\ 0 & 0\end{bmatrix}$. 

	We can summarize the possible local systems on strata as follows:
	\[\Loc_G (\scrO_\lambda) \cong \E\text{-mod} \]
	\[\Loc_G (\scrQ) \cong \E [\Z/2]\text{-mod}\]

	\noindent\textbf{Cuspidal Sheaves}

	By Lemma \ref{cusp_and_support_equiv}, the only possible cuspidal sheaves are IC-complexes on the nilpotent orbits.
	Note that $\IC (\scrO_0, \E)$ is never cuspidal, since its Fourier transform is the constant sheaf $\underline{\E}_{\fr{g}}$ which does not have nilpotent support.

	\textit{Case 1: $\ell \neq 2$:} There are two irreducible local systems on $\scrQ$: the constant sheaf $\underline{\E}_{\scrQ}$ and local system $\scrL_{\text{sgn}}$ corresponding to the sign representation of $\Z/2\Z$. 
	Now consider $\IC (\scrQ, \E)$. 
	he Springer sheaf $\Ind_B^G \delta_0$ decomposes as a direct sum of the skyscraper sheaf $\IC (\scrO_0, \E)$ and $\IC (\scrQ, \E)$. In particular, $\IC (\scrQ, \E)$ is not cuspidal.

	Note that the Grothendieck-Springer sheaf $\Ind_B^G \underline{\E}_T [1]$ decomposes as a direct sum of the constant sheaf $\underline{\E}[3]$ and $\IC (\fr{g}_{\text{rs}}, \scrL_{\text{sgn}}')$ where $\scrL_{\text{sgn}}'$ is the irreducible local system on $\fr{g}_{\text{rs}}$ coming from the irreducible nontrivial representation of $W \cong \Z/2\Z$.
	By exhaustion of the sheaves induced from cuspidal sheaves on the torus, we then see that $\IC (\scrQ, \scrL_{\text{sgn}})$ is cuspidal.
	
	\textit{Case 2: $\ell = 2$:} The only irreducible local system on $\scrQ$ is the constant sheaf. 
	The Springer sheaf no longer decomposes as a direct sum, and $\IC (\scrQ, \E)$ is no longer a subobject/quotient.
	Similarly, the Grothendieck-Springer sheaf only has the constant sheaf as a subobject/quotient.
	As a result, $\IC (\scrQ, \E)$ is the unique cuspidal sheaf. This can be seen in another way. 
	Namely, when $\ell = 2$, $\T \IC (\scrQ, \E) \cong \IC (\scrQ, \E)$. As a result, $\IC (\scrQ, \E)$ could also be seen to be cuspidal by Lemma \ref{cusp_and_support_equiv}.

	\noindent\textbf{Orbital Sheaves}

	Since we are working in the analytic setting, we only allow for $\G_m$-equivariant sheaves. Since any $\G_m$-orbit of a semisimple element passes through infinitely many semisimple orbits, all orbital sheaves are supported on the nilpotent cone.

	\textit{Case 1: $\ell \neq 2$:}
	 The orbital sheaves are given by $\IC(\scrO_0, \E), \IC(\scrQ, \E)$, and $\IC(\scrQ, \scrL_{\text{sgn}})$.

	\textit{Case 2: $\ell = 2$: }
	The orbital sheaves are given by $\IC(\scrO_0, \E)$ a nd $\IC (\scrQ, \E)$.

	\noindent\textbf{Character Sheaves}

	We can leverage Corollary \ref{cs_are_fourier_of_orbital} to obtain the character sheaves by Fourier transform. 
	Note that $\T (\IC(\scrO_0, \E)) \cong \underline{\E}_{\fr{g}} [\dim \fr{g}] = \IC (\fr{g}_{\text{rs}}, \E)$ is always a character sheaf.

	\textit{Case 1: $\ell \neq 2$:}
	Since $\IC (\scrQ, \scrL_{\text{sgn}})$ is cuspidal, it is a character sheaf. We can also see this by Fourier transform. Namely, since the Fourier transform of $\IC (\scrQ, \scrL_{\text{sgn}})$ has nilpotent support, we can deduce that $\T \IC (\scrQ, \scrL_{\text{sgn}}) \cong \IC (\scrQ, \scrL_{\text{sgn}})$. 
	Since $\IC (\scrQ, \E)$  is not cuspidal, its support cannot be contained in $\scrN$. 
	By process of elimination, $\T \IC (\scrQ, \E) \cong \IC (\fr{g}_{\text{rs}}, \scrL_{\text{sgn}}')$.

	\textit{Case 2: $\ell = 2$: } 
	Since there are two orbital sheaves, there are two character sheaves. One of them is covered by the constant sheaf as described by the above. The other character sheaf must be the unique cuspidal sheaf $\IC (\scrQ, \E)$.

	\noindent\textbf{Étale Case}

	We will briefly explain some of the differences between the étale case and the analytic case. The classification of cuspidal sheaves remains the same provided $\text{char} (k) = p$ is sufficiently large.
	A new feature appears in the form of new orbital sheaves, and hence, new character sheaves. One still has in the étale case that the semisimple orbits are in bijection with $\overline{k}/\pm 1$.
	One can then consider the IC-complexes, $\scrE_\lambda = \IC (\scrO_\lambda, \E)$ where $\lambda \in \overline{k}/\pm 1$. Clearly, none of these IC-complexes are cuspidal (nor are their Fourier transforms) since their support is not contained in the nilpotent cone.
	Nonetheless, $\T (\scrE_\lambda)$ are character sheaves.
		
	\end{example}

	\section{Singular Support Characterization}

	Character sheaves were characterized by their singular support independently by Mirković and Vilonen \cite{MV} and Ginzburg \cite{G}. 
	Both of their results crucially used varieties of characteristic 0. In the Lie algebra setting, Mirković \cite{M} also characterized character sheaves on Lie algebras in terms of their singular support (along with a constructibility requirement).
	This worked in the setting of $D$-modules. The main issue with extending their results was the lack of a good notion of singular support for varieties of positive characteristic.
	
	Beilinson, in \cite{Bei}, generalized the notion of singular support for constructible sheaves defined over any characteristic. Since then the above singular support characterizations have found positive characteristic variants. Namely, Psarmoligkos \cite{P} proved that character sheaves on a reductive group have nilpotent singular support.
	Shortly after Zhou in \cite{Zhou2}, proved Mirković's singular support characterization of character sheaves on Lie algebras using similar methods. 
	The methods of Zhou should presumably extend to the modular case as well. We will work through the details for one direction of the implication. 

	\begin{proposition}\label{thm:sing_supp_and_cs}
		Let $\scrA$ be a simple perverse sheaf in $\Perv_G (\fr{g}, \E)$. If $\scrA$ is a character sheaf, then $SS(\scrA) \subset \fr{g} \times \scrN_{\fr{g}}$.
	\end{proposition}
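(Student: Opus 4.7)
The plan is to use Corollary \ref{cs_are_fourier_of_orbital} to write $\scrA = \T_{\fr{g}'}\scrB$ for a simple orbital sheaf $\scrB \in \Perv_G^{\Orb}(\fr{g}', \E)$, and then transport a singular-support estimate for $\scrB$ across the Fourier transform, following the template of Zhou \cite{Zhou2} in modular coefficients. Since $\scrB = \IC(\overline{\scrO}, \scrL)$ is $G$-equivariant, $SS(\scrB)$ is $G$-invariant and lies in the union of conormal bundles of the orbits in $\overline{\scrO}$. After identifying $T^*\fr{g}' \cong \fr{g}' \times \fr{g}$ via the fixed $G$-invariant bilinear form, at a smooth point $\xi \in \scrO' \subset \overline{\scrO}$ the conormal fiber becomes the centralizer $\fr{z}_\fr{g}(\xi)$, so $SS(\scrB) \subset \{(\xi, y) : [\xi, y] = 0\}$, inside the commuting scheme.

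Next I would invoke the microlocal compatibility of the Fourier transform with singular support. In the analytic (Fourier--Sato) setting this is classical \cite{KS}, and in the étale setting Beilinson's framework \cite{Bei}, as applied in Psarmoligkos \cite{P} and Zhou \cite{Zhou2}, produces a symplectic correspondence $a \colon T^*\fr{g}' \to T^*\fr{g}$ exchanging base and fiber directions (up to a sign) and a bound $SS(\T\scrB) \subset a(SS(\scrB))$. Applying this to the containment from the first step constrains the fiber direction of $SS(\scrA)$ at every $x \in \fr{g}$ to a subset of the base projection of $SS(\scrB)$, i.e., inside a finite union of orbit closures in $\fr{g}$.

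To pin down that this fiber actually lives in $\scrN_{\fr{g}}$, I would reduce to the cuspidal case via Theorem \ref{admissible_and_cs_are_the_same}: any character sheaf is a composition factor of $\Ind_P^G \scrC$ for some cuspidal $\scrC$ on a Levi $\fr{l}$. Using the description $\Ind_P^G \cong \mu_! \circ I_P^G \circ \pi^*[2\dim(G/P)]$ from Lemma \ref{lem:ind_equiv_version_of_ind}, standard singular-support estimates for smooth pullback and proper pushforward reduce the problem to verifying the claim for a cuspidal character sheaf. Then Lemma \ref{cusp_and_support_equiv} places the semisimple factor of $\scrC$ and of $\T_{\fr{l}}\scrC$ inside the nilpotent cone of $[\fr{l},\fr{l}]$, while the $\boxtimes$-decomposition of Lemma \ref{equiv_defn_for_cs_factorization} handles the central factor (additive local systems on $Z(\fr{g})$ have singular support sitting over a single point of $Z(\fr{g})$, which after transporting through the Fourier swap contributes only the zero section in the fiber direction). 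Combining these with the transitivity of induction (Lemma \ref{ind_res_transitivity}) yields $SS(\scrA) \subset \fr{g} \times \scrN_\fr{g}$.

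The main obstacle is verifying the Fourier--singular-support compatibility and the proper-pushforward / smooth-pullback SS bounds in Beilinson's framework with modular coefficients, where $\E$ may be the residue field $\F$. The paper explicitly flags this as the content to be lifted from Zhou's work, and careful microlocal bookkeeping is required to confirm that each step of the reduction survives with $\F$-coefficients; the remainder of the argument is then essentially formal given the orbital--character correspondence of Corollary \ref{cs_are_fourier_of_orbital} and the structural description of cuspidal sheaves from Section 4.
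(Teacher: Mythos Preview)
Your third paragraph is essentially the paper's proof: reduce to the cuspidal case by writing $\scrA$ as a subobject of $\Ind_P^G \scrC$ (the paper cites Lemma~\ref{lem:subobjs_suffices} rather than Theorem~\ref{admissible_and_cs_are_the_same}, but this is the same content), then bound $SS(\Ind_P^G \scrC)$ by functoriality. The paper expresses $\Ind$ as $\Av_{P*}^G i_* \pi^!$ and invokes Lemma~3.3 of \cite{P} for the averaging estimate $SS(\Av_{P*}^G \scrF) \subseteq \overline{G \cdot SS(\scrF)}$; your formulation via $\mu_!$ and the proper-pushforward bound is equivalent since $\mu$ is proper. For the cuspidal base case (Lemmas~\ref{semisimple_cusp_nilp_ss} and~\ref{red_cusp_nilp_ss}) the paper uses $\G_m$-equivariance together with Zhou's identity $CC(\scrC) = CC(\T\scrC)$ from \cite{Zhou1} to conclude that $SS(\scrC)$ is nilpotent iff both $\supp\scrC$ and $\supp\T\scrC$ lie in $\scrN$, which is then Lemma~\ref{cusp_and_support_equiv}.

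Two remarks. First, your opening two paragraphs are a detour: the commuting-scheme bound for orbital $\scrB$ only places $SS(\scrA)$ inside a swap of the commuting variety, not inside $\fr{g}\times\scrN_{\fr{g}}$, and you correctly drop this in favor of the induction argument. Second, your claim that an additive local system has ``singular support sitting over a single point of $Z(\fr{g})$'' is backwards: $\scrL_\alpha$ is a rank-one local system on all of $Z(\fr{g})$, so $SS(\scrL_\alpha)$ is simply the zero section $Z(\fr{g})\times\{0\}$, with no Fourier swap required. Your conclusion that the central factor contributes only the zero section in the fiber direction is nonetheless correct, and this is precisely how Lemma~\ref{red_cusp_nilp_ss} handles it.
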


	We will closely follow the proof of Theorem 5.1 in \cite{Zhou2}. 

	\begin{lemma}\label{semisimple_cusp_nilp_ss}
		Suppose $\fr{g}$ is semisimple. Let $\scrC \in \Perv_G (\fr{g}, \E)$ be a simple perverse sheaf. Then $\scrC$ is cuspidal if and only if $SS(\scrC) \subseteq \fr{g} \times \scrN_{\fr{g}}$.
	\end{lemma}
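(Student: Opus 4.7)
The plan is to convert the singular-support condition into a statement about the supports of $\scrC$ and $\T_\fr{g} \scrC$ and then appeal to Lemma \ref{cusp_and_support_equiv}. Two microlocal inputs drive the argument. First, for any constructible $\scrF$ on $\fr{g}$ one has $SS(\scrF) \subseteq \supp(\scrF) \times \fr{g}'$, since $\scrF$ vanishes off its support. Second, the Fourier--Deligne transform intertwines $SS(\scrC)$ with $SS(\T_\fr{g} \scrC)$ via the natural swap $T^*\fr{g} \cong T^*\fr{g}'$, which under our $G$-invariant identification $\fr{g}' \cong \fr{g}$ is just the exchange of the two factors of $\fr{g} \times \fr{g}$. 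Applying the first bound to $\T_\fr{g} \scrC$ and passing through the swap yields $SS(\scrC) \subseteq \fr{g} \times \supp(\T_\fr{g} \scrC)$, and combining with the first bound itself gives the sandwich
\[
SS(\scrC) \subseteq \supp(\scrC) \times \supp(\T_\fr{g} \scrC).
\]

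For the \emph{only if} direction, assume $\scrC$ is cuspidal. Lemma \ref{cusp_and_support_equiv} gives $\supp(\scrC) \subseteq \scrN$ and $\supp(\T_\fr{g} \scrC) \subseteq \scrN$, and the sandwich immediately produces $SS(\scrC) \subseteq \scrN \times \scrN \subseteq \fr{g} \times \scrN$.

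For the \emph{if} direction, assume $SS(\scrC) \subseteq \fr{g} \times \scrN$. Applying the Fourier swap gives $SS(\T_\fr{g} \scrC) \subseteq \scrN \times \fr{g}$, whose intersection with the zero section yields $\supp(\T_\fr{g} \scrC) \subseteq \scrN$ at no cost. The remaining content is to prove the dual statement $\supp(\scrC) \subseteq \scrN$, after which Lemma \ref{cusp_and_support_equiv} closes the argument. For this I would run a microlocal conormal argument: let $Y$ be the open smooth $G$-invariant stratum of $\supp(\scrC)$, so that $\scrC|_Y$ is a shifted local system and $SS(\scrC)$ contains the conormal bundle $T^*_Y \fr{g}$. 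Under the $G$-invariant form, its fiber at $y \in Y$ is the annihilator of $T_y Y$, which for a single $G$-orbit is the centralizer $\mathfrak{z}_\fr{g}(y)$ and in particular contains $y$ itself. The hypothesis $\mathfrak{z}_\fr{g}(y) \subseteq \scrN$ then forces $y \in \scrN$, so $Y \subseteq \scrN$ and $\supp(\scrC) = \overline{Y} \subseteq \scrN$.

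The step I expect to be the main obstacle is handling strata $Y$ that are unions of infinitely many $G$-orbits (for instance an open subset of $\fr{g}^{rs}$), where $T_y Y$ can fill all of $\fr{g}$ and the conormal fiber degenerates to zero, giving no direct constraint from the open stratum alone. In that regime one must unpack the $SS$-contributions coming from deeper strata of the stratification of $\supp(\scrC)$ together with the local-system data on $Y$, following the microlocal approach of \cite{Zhou2}; translating this characteristic-zero input to the modular setting is where the bulk of the technical work will lie.
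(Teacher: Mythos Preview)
Your \emph{only if} argument is correct and coincides with the paper's. Both rest on the sandwich $SS(\scrC) \subseteq \supp(\scrC) \times \supp(\T_\fr{g}\scrC)$; the paper obtains it by first observing that $\scrC$ is automatically $\G_m$-equivariant for the weight-$2$ action (\cite{A1}, Lemma 8.2.7(3), using that $\fr{g}$ is semisimple) and then invoking the identity $CC(\scrC) = CC(\T_\fr{g}\scrC)$ of \cite{Zhou1} for $\G_m$-monodromic sheaves under the swap $T^*\fr{g} \cong T^*\fr{g}'$. Combined with Lemma~\ref{cusp_and_support_equiv} this gives $SS(\scrC) \subseteq \scrN_{\fr{g}} \times \scrN_{\fr{g}} \subseteq \fr{g} \times \scrN_{\fr{g}}$.

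The \emph{if} direction, however, is false as stated, so the obstacle you flagged is not merely technical. Take $\scrC = \underline{\E}_{\fr{g}}[\dim \fr{g}]$: this is a simple object of $\Perv_G(\fr{g},\E)$ with $SS(\scrC) = \fr{g} \times \{0\} \subseteq \fr{g} \times \scrN_{\fr{g}}$, but $\supp(\scrC) = \fr{g} \not\subseteq \scrN_{\fr{g}}$, so by Lemma~\ref{cusp_and_support_equiv} it is not cuspidal. Your conormal argument breaks down here exactly as you anticipated (the open stratum is all of $\fr{g}$, with trivial conormal), and there are no deeper strata to appeal to. The paper's own proof contains the same gap: the sentence ``$SS(\scrC)$ is nilpotent if and only if $\supp(\scrC) \subseteq \scrN_{\fr{g}}$ and $\supp(\T\scrC) \subseteq \scrN_{\fr{g}}$'' is only valid if ``nilpotent'' is read as $SS(\scrC) \subseteq \scrN_{\fr{g}} \times \scrN_{\fr{g}}$ rather than $\fr{g} \times \scrN_{\fr{g}}$. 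Since only the \emph{only if} direction is invoked downstream (in Lemma~\ref{red_cusp_nilp_ss} and Proposition~\ref{thm:sing_supp_and_cs}), the remainder of the paper is unaffected.
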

	\begin{proof}
		By Lemma 8.2.7 (3) of \cite{A1}, $\scrC$ is $\G_m$-equivariant for the weight 2 action. By \cite{Zhou1}, $CC(\scrC) = CC(\T \scrC)$, and hence $SS(\scrC) = SS(\T \scrC)$.
		In particular, $SS(\scrC)$ is nilpotent if and only if $\supp (\scrC) \subseteq \scrN_{\fr{g}}$ and $\supp (\T \scrC) \subseteq \scrN_{\fr{g}}$.
		By Lemma \ref{cusp_and_support_equiv}, we have that $SS(\scrC) \subseteq \fr{g} \times \scrN_{\fr{g}}$ if and only if $\scrC$ is cuspidal.
	\end{proof}

	\begin{lemma}\label{red_cusp_nilp_ss}
		If $\scrC \in \Perv_G (\fr{g}, \E)$ is cuspidal, then $SS (\scrC) \subseteq \fr{g} \times \scrN_{\fr{g}}$.
	\end{lemma}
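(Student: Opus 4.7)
The plan is to reduce the reductive case to the semisimple case covered by Lemma \ref{semisimple_cusp_nilp_ss}, exploiting the $\boxtimes$-factorization built into the definition of cuspidality and the behavior of singular support under external tensor products.

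First, since $\scrC$ is cuspidal, by Definition \ref{defn:cuspidal} it decomposes as $\scrC \cong \scrL \boxtimes \scrB$ where $\scrL$ is an additive local system on $Z(\fr{g})$ and $\scrB$ is a perverse sheaf on $[\fr{g},\fr{g}]$. I would then invoke the argument already carried out in the proof of Theorem \ref{reductive_cusp_are_cs}: the computation of $\LRes_P^G(\scrL\boxtimes\scrB)$ using the decomposition $\fr{p}=Z(\fr{g})\oplus\fr{p}'$ shows that $\scrB$ is itself res-small on the semisimple Lie algebra $[\fr{g},\fr{g}]$, hence cuspidal there.

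Next I would invoke the product formula for Beilinson's singular support: under the identification $T^*\fr{g}\cong T^*Z(\fr{g})\times T^*[\fr{g},\fr{g}]$ coming from $\fr{g}=Z(\fr{g})\oplus[\fr{g},\fr{g}]$, one has
\[ SS(\scrL\boxtimes\scrB) = SS(\scrL)\times SS(\scrB). \]
Because $\scrL$ is a local system on the smooth variety $Z(\fr{g})$, it is lisse and so $SS(\scrL)$ is contained in the zero section $Z(\fr{g})\times\{0\}\subseteq Z(\fr{g})\times Z(\fr{g})'$. By Lemma \ref{semisimple_cusp_nilp_ss} applied to the semisimple Lie algebra $[\fr{g},\fr{g}]$, the cuspidal sheaf $\scrB$ satisfies $SS(\scrB)\subseteq [\fr{g},\fr{g}]\times\scrN_{[\fr{g},\fr{g}]}$.

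Combining these, $SS(\scrC)$ lies in $(Z(\fr{g})\times\{0\})\times([\fr{g},\fr{g}]\times\scrN_{[\fr{g},\fr{g}]})$. Since $\scrN_{\fr{g}}=\{0\}\oplus\scrN_{[\fr{g},\fr{g}]}\subseteq Z(\fr{g})\oplus[\fr{g},\fr{g}]=\fr{g}$, this rearranges to $SS(\scrC)\subseteq \fr{g}\times\scrN_{\fr{g}}$, as claimed. The only genuinely delicate step is citing the product formula for Beilinson's singular support in the modular étale setting and the vanishing of $SS$ of a lisse sheaf off the zero section — both are standard, but in the étale setting one should cite Beilinson \cite{Bei} (and in the analytic/characteristic~$0$ variant of the statement, the analogous classical properties from \cite{KS}). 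I expect this invocation to be the main technical point; the rest is an assembly of facts already established in the paper.
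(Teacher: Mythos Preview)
Your proposal is correct and follows essentially the same route as the paper: factor $\scrC\cong\scrL\boxtimes\scrB$, invoke the argument in the proof of Theorem~\ref{reductive_cusp_are_cs} to see that $\scrB$ is cuspidal on $[\fr{g},\fr{g}]$, apply Lemma~\ref{semisimple_cusp_nilp_ss}, and then control $SS(\scrL\boxtimes\scrB)$. The only cosmetic difference is that the paper writes the last step as $SS(\scrL\boxtimes\scrB)=\pr_2^\circ SS(\scrB)$ (implicitly using that $\scrL$ is lisse, so tensoring by $\pr_1^*\scrL$ leaves singular support unchanged), whereas you invoke the product formula $SS(\scrL)\times SS(\scrB)$ together with $SS(\scrL)\subseteq Z(\fr{g})\times\{0\}$; these are the same computation.
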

	\begin{proof}
		We can find a $\boxtimes$-factorization of $\scrC$ as $\scrC \cong \scrL \boxtimes \scrB$. 
		Let $\pr_2 : Z(\fr{g}) \times [\fr{g}, \fr{g}] \to [\fr{g}, \fr{g}]$ be the projection. One has that 
		\[SS (\scrC) = SS (\scrL \boxtimes \scrB) = \pr_2^\circ SS(\scrB)\]
		As we saw in the proof of Theorem \ref{reductive_cusp_are_cs}, $\scrB$ is cuspidal.
		So by Lemma \ref{semisimple_cusp_nilp_ss}, we have that $SS (\scrB)$ is nilpotent.
		It is clear that $p^\circ$ takes $[\fr{g}, \fr{g}] \times \scrN_{\fr{g}}$ to a subset contained in $\fr{g} \times \scrN_{\fr{g}}$ (since the nilpotent cone for a reductive Lie algebra is a subset of its semisimple part).
		Therefore, $SS (\scrC)$ is nilpotent.
	\end{proof}

	\begin{midsecproof}{Proposition \ref{thm:sing_supp_and_cs}}

	Let $\scrA$ be a character sheaf. By \ref{lem:cs_are_qad}, $\scrA$ is quasi-admissible.
	By Lemma \ref{lem:subobjs_suffices}, $\scrA$ is a subobject of $\Ind_{L \subset P}^G \scrC$ for some cuspidal sheaf $\scrC$ on $\fr{l}$.
	It then suffices to show that $SS (\Av_{P*}^G \circ i_* \pi^! \scrC)$ has nilpotent singular support. By functoriality of singular support,
	\[SS (i_* \pi^! \scrC) = i_\circ \pi^\circ SS (\scrC)\]
	which preserves nilpotency.

	By Lemma 3.3 of \cite{P}, one has that 
	\[SS (\Av_{P*}^G i_*\pi^! \scrC) \subseteq \overline{G \cdot SS(i_*\pi^! \scrC)}.\]
	Since $\scrN_{\fr{g}}$ is a union of $G$-orbits, we have that $SS (\Ind_P^G \scrC)$ is nilpotent.
	\end{midsecproof}

	\section{Induction Series}

	Cuspidal and admissible sheaves on the nilpotent cone $\scrN$ are well studied in the context of the modular generalized Springer correspondence of \cite{AHJR1}, \cite{AHJR2}, and \cite{AHJR3}.
	In this section, we will describe some analogous results for cuspidal and admissible sheaves on all of $\fr{g}$. We will also study the compatibility between admissible sheaves on $\fr{g}$ and on $\scrN$.

	Let $a_{\fr{g}} : \scrN_{\fr{g}} \to \fr{g}$ be the inclusion of the nilpotent cone into $\fr{g}$. 
	Consider the diagram

	{ \centering
	\begin{tikzcd}
		\fr{g}                                 & \fr{p} \arrow[l, "i"'] \arrow[r, "\pi"]                                   & \fr{l}                                 \\
		\scrN_{\fr{g}} \arrow[u, "a_{\fr{g}}"] & \scrN_{\fr{p}} \arrow[u, "a_{\fr{p}}"] \arrow[l, "i'"] \arrow[r, "\pi'"'] & \scrN_{\fr{l}} \arrow[u, "a_{\fr{l}}"]
		\end{tikzcd}
	\par }

	The top row of this diagram was used in defining induction and restriction functors. We will define analogous induction and restriction functors on the nilpotent cone that are compatible with restriction.
	Namely, if $A' \in \Perv_G (\scrN, \E)$, we can define the induction and restriction functors via
	\[ \nLRes_P^G = \For_L^P \circ \pi'_! \circ i'^* \circ \For_P^G \]
	\[ \nRRes_P^G = \For_L^P \circ \pi'_* \circ i'^! \circ \For_P^G \]
	\[ \nInd_P^G = \Av_{P!}^G \circ i'_! \circ \pi'^* \circ \Av_{L!}^P  \]
	These induction and restriction functors behave largely in the same way as the induction and restriction functors discussed before.
	Namely, they satisfy variants of Lemmas \ref{ind_res_verdier_duality_compat}, \ref{ind_res_fourier_compat}, \ref{ind_res_transitivity}. The following lemma explains how induction and restriction on the nilpotent cone can be computed via induction and restriction of all of $\fr{g}$.
	
	\begin{lemma}\label{ind_res_and_nilp_cone}
		Let $A \in \Perv_G (\fr{g}, \E)$ and $B \in \Perv_L (\fr{l}, \E)$. The following compatibilities between the nilpotent restriction (resp. induction) and restriction (resp. induction) functors hold 
		\begin{enumerate}
			\item $\nLRes_P^G \circ a_{\fr{g}}^* \cong a_{\fr{l}}^* \LRes_P^G$,
			\item $\LRes_P^G \circ a_{\fr{g}!} \cong a_{\fr{l}!} \nLRes_P^G$,
			\item $\nInd_P^G \circ a_{\fr{l}}^* \cong a_{\fr{g}}^* \Ind_P^G$,
			\item $\Ind_P^G \circ a_{\fr{l}!} \cong a_{\fr{g}!} \nInd_P^G$.
		\end{enumerate}
	\end{lemma}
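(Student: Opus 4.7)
All four statements follow from base change applied to the two cartesian squares in the displayed diagram. The left square is cartesian by the definition $\scrN_{\fr{p}} = \scrN_{\fr{g}} \cap \fr{p}$, and the right square is cartesian because $\scrN_{\fr{p}} = \scrN_{\fr{l}} + \fr{u}$, i.e. $\pi^{-1}(\scrN_{\fr{l}}) = \scrN_{\fr{p}}$ (a consequence of the $s = 0$ case of Lemma \ref{orbit_calc_lemma}(1), or directly: the induced action of $\ad x$ on $\fr{g}/\fr{u} \cong \fr{l}$ is $\ad \pi(x)$, so $x$ nilpotent forces $\pi(x)$ nilpotent).

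For (1) and (2), I would unfold the definitions
\[ \LRes_P^G = \For_L^P \circ \pi_! \circ i^* \circ \For_P^G, \qquad \nLRes_P^G = \For_L^P \circ \pi'_! \circ i'^* \circ \For_P^G, \]
and use that the forgetful functors commute with $*$-pullback and $!$-pushforward. Statement (1) then reduces to combining the (trivial) composition of $*$-pullbacks $i'^* a_{\fr{g}}^* \cong a_{\fr{p}}^* i^*$ on the left square with the base change isomorphism $\pi'_! a_{\fr{p}}^* \cong a_{\fr{l}}^* \pi_!$ on the right square. Statement (2) is dual: one composes the base change isomorphisms $i^* a_{\fr{g}!} \cong a_{\fr{p}!} i'^*$ (left square) and $\pi_! a_{\fr{p}!} \cong a_{\fr{l}!} \pi'_!$ (right square).

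For (3) and (4), I would switch to the reformulation of Lemma \ref{lem:ind_equiv_version_of_ind}, writing $\Ind_P^G \cong \mu_! \circ I_P^G \circ \pi^*[2\dim G/P]$ and analogously $\nInd_P^G \cong \mu'_! \circ I_P^G \circ \pi'^*[2\dim G/P]$, where $\mu' : G \times^P \scrN_{\fr{p}} \to \scrN_{\fr{g}}$ is induced from $\mu$. The auxiliary square
\[ \begin{array}{ccc} G \times^P \scrN_{\fr{p}} & \hookrightarrow & G \times^P \fr{p} \\ \downarrow \mu' & & \downarrow \mu \\ \scrN_{\fr{g}} & \xrightarrow{a_{\fr{g}}} & \fr{g} \end{array} \]
is cartesian because $G$-conjugation preserves nilpotency, so $\mu^{-1}(\scrN_{\fr{g}}) = G \times^P \scrN_{\fr{p}}$. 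Statement (3) then follows from base change for $*$-pullback on this new square and on the right square, together with the easy observation that $I_P^G$ commutes with $*$-pullback along the $P$-equivariant closed embedding $a_{\fr{p}}$. Statement (4) is analogous using $!$-base change.

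The main potential obstacle is verifying that the induction equivalence $I_P^G$ commutes with the relevant sheaf functors along $a_{\fr{p}} : \scrN_{\fr{p}} \hookrightarrow \fr{p}$; but this is essentially formal, since $I_P^G$ is realized by pulling back along a smooth quotient map and the functors $a_{\fr{p}}^*$, $a_{\fr{p}!}$ are compatible with smooth base change in the equivariant setting. Everything else reduces to bookkeeping with adjunctions and cartesian diagrams.
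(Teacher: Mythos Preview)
Your proposal is correct and follows the same approach as the paper: the paper's proof simply cites \cite{AHJR1}, 2.9 for (2) and (4) and says (1) and (3) are ``an easy consequence of commutativity of the earlier diagram and proper base change,'' which is exactly the base-change argument you spell out. Your choice to pass to the reformulation of Lemma~\ref{lem:ind_equiv_version_of_ind} for (3) and (4) is the natural way to make the induction case explicit, and your observation that the squares are in fact cartesian (not merely commutative) is the precise hypothesis needed for base change to apply.
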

	\begin{proof}
		Facts (2) and (4) can be found in \cite{AHJR1} 2.9. Facts (1) and (3) are an easy consequence of commutativity of the earlier diagram and proper base change.
	\end{proof}

	We recall here the notion of a cuspidal sheaf as used in the modular generalized Springer correspondence.

	\begin{definition}\label{def:nilp_cuspidal}
		Let $\scrC \in \Perv_G (\scrN, \E)$. We call $\scrC$ \textbf{cuspidal} if $\nLRes_P^G \scrC = 0$ for all proper parabolic subgroups $P \subset G$.
	\end{definition}

	\begin{remark}\label{rem:cusp_nilp_and_cusp_global}
		Observe the absence of a central factor condition in the definition of cuspidal sheaves on the nilpotent cone. This is not surprising as $\scrN_{\fr{g}} = \scrN_{[\fr{g}, \fr{g}]}$.
		In particular, if $\scrC \in \Perv_G (\scrN, \E)$ is cuspidal, then by Lemma \ref{ind_res_and_nilp_cone} (2), $a_{\fr{g}!} \scrC$ is res-small in $\fr{g}$.
		Clearly $a_{\fr{g}!} \scrC$ is supported on $[\fr{g}, \fr{g}]$, and hence is cuspidal. On the other hand, if $\scrC' \in \Perv_G (\fr{g}, \E)$ is cuspidal, then by Lemma \ref{ind_res_and_nilp_cone} (1), $\scrC\vert_{\scrN} [-\dim Z (\fr{g})]$ is cuspidal.
	\end{remark}

	The structure of the modular generalized Springer correspondence ultimately is determined by understanding the induction series. Before reviewing the modular generalized Springer correspondence, we will review the definition of induction series on the nilpotent cone as well as the generalization for the Lie algebra setting.

	\begin{definition}
		A \textbf{cuspidal datum} for $\scrN_{\fr{g}}$ is a triple $(L, \scrO, \scrE)$ where $L \subseteq G$ is a Levi subgroup, $\scrO$ is a distinguished $L$-orbit in $\scrN_{\fr{l}}$, and $\scrE \in \Loc_L (\scrO)$ such that $\IC (\scrO, \scrE)$ is a cuspidal sheaf on $\fr{l}$.
		Let $\fr{C}_{\scrN_{\fr{g}}, \E}$ be the set of all cuspidal datums on $\scrN_{\fr{g}}$.
		We can define the \textbf{induction series} $\Adm_{(L, \scrO, \scrE)} (\scrN_{\fr{g}}, \E)$ for a cuspidal datum $(L, \scrO, \scrE)$ as the full Serre subcategory of $\Perv_G (\scrN_{\fr{g}}, \E)$ generated by subobjects (or quotients) of $\nInd_L^G \IC (\scrO, \scrE)$.
	\end{definition}

	There is a $G$-action on $\fr{C}_{\scrN_{\fr{g}}, \E}$ given by
	\[g \cdot (L, \scrO, \scrE) = ({}^g L, {}^g \scrO, \Ad (g^{-1})^* \scrE )\]
	Note that the induction series is invariant under this $G$-action (Corollary 2.2, \cite{AHJR2}).

	\begin{definition}
		A \textbf{cuspidal datum} for $\fr{g}$ is a triple $(L, \alpha, \scrO, \scrE)$ where $L \subseteq G$ is a Levi subgroup, $\scrO \subseteq [\fr{l}, \fr{l}]$ is a nilpotent $L$-orbit of $\fr{l}$, $\alpha \in Z(\fr{l})$, and $\scrE \in \Loc_L (\scrO)$ such that $\scrL_\alpha \boxtimes \IC (\scrO, \scrE)$ is a cuspidal sheaf on $\fr{l}$ (where $\scrL_\alpha = \T_{Z(\fr{l})} i_{\alpha *} \underline{\E}$ ).
		Let $\fr{C}_{\fr{g}, \E}$ be the set of all cuspidal datums on $\fr{g}$.
		We can define the \textbf{induction series} $\Adm_{(L, \alpha, \scrO, \scrE)} (\fr{g}, \E)$ for a cuspidal datum $(L, \alpha, \scrO, \scrE)$ as the full Serre subcategory of $\Perv_G (\fr{g}, \E)$ generated by subobjects (or quotients) of $\Ind_L^G (\scrL_\alpha \boxtimes \IC (\scrO, \scrE))$.
	\end{definition}

	Note that by Lemma \ref{cusp_and_support_equiv} it is clear that every cuspidal sheaf on $\fr{l}$ occurs in some cuspidal datum corresponding to $L$.

	There is a $G$-action on $\fr{C}_{\fr{g}, \E}$ given by
	\[g \cdot (L, \alpha,  \scrO, \scrE) = ({}^g L, {}^g \alpha,  {}^g \scrO, \Ad (g^{-1})^* \scrE )\]
	Note that the induction series is invariant under this $G$-action by the same argument of Corollary 2.2 in \cite{AHJR2}.
	It is clear that every irreducible sheaf in an induction series is admissible. 

	Note that in both of the previous two definitions, there is seemingly a choice between using subobjects or quotients. By Lemma 2.3 of \cite{AHJR2} for the nilpotent case and Proposition \ref{prop:subobjs_equals_subquots} for the Lie algebra case, we can freely interchange these notions.

	\begin{remark}
		Let $(L, \alpha, \scrO, \scrE)$ be a cuspidal datum for $\fr{g}$, and consider the cuspidal sheaf $\scrC = \scrL_\alpha \boxtimes \IC( \scrO, \scrE)$ on $\fr{l}$.
		Note that by Remark \ref{rem:cusp_nilp_and_cusp_global}, $\IC (\scrO, \scrE)$ is cuspidal on $N_{\fr{l}}$. In particular, we can associate to this cuspidal datum for $\fr{g}$ a corresponding cuspidal datum for $\scrN_{\fr{g}}$ given by $(L, \scrO, \scrE)$.
	\end{remark}

	The modular generalized Springer correspondence can be thought of as a 3-step process of increasingly more difficult statements:
	\begin{enumerate}
		\item[(S1)] There is a decomposition of the simple perverse sheaves in $\Perv_G (\scrN, \E)$ into induction series:
		\[\Irr (\Perv_G (\scrN, \E)) = \bigcup_{(L, \scrO, \scrE) \in \fr{C}_{\scrN, \E}/G } \Irr (\Adm_{(L, \scrO, \scrE)} (\scrN, \E)).\]
		\item[(S2)] The union in (S1) is, in fact, disjoint.
		\item[(S3)] There is a bijection between the irreducible sheaves in the induction series and irreducible representations of the Weyl group of $L$:
		\[\Irr (\Adm_{L, \scrO, \scrE} (\scrN, \E)) \cong \Irr (\E [N_G (L)/L]).\]
	\end{enumerate}

	We can rephrase (S1) into the language of admissible complexes as follows: every simple perverse sheaf in $\Perv_G (\scrN, \E)$ is an admissible complex.
	This statement fails horrifically when directly translated to the entire Lie algebra. For example, consider the skyscraper sheaf $\delta_0$ at $0$ in $\Perv_G (\fr{g}, \E)$. Its Fourier transform is the constant sheaf $\underline{\E}_{\fr{g}'}$ which is not orbital; therefore, $\delta_0$ is not an admissible complex.
	Nonetheless, if one instead replaces all perverse sheaves with admissible complexes, the result does hold (Lemma \ref{lem:constr_of_cuspidal} and Lemma \ref{lem:subobjs_suffices}).

	After the refining of (S1), we can show that (S2) will hold as well. Both of these results are stated in Theorem \ref{decomp_of_ad_sheaves}.
	The proof of the Lie algebra version of (S2) will follow from a similar argument as the nilpotent cone case covered in \cite{AHJR3}. Namely, one must leverage a version of Mackey's formula in positive characteristic.
	We provide a version of Mackey's formula in Theorem \ref{mackeys_thm}.  

	There likely should be a variant for (S3) as well, although we do not yet have a sensible replacement. 
	If one looks at all of $\Perv_G (\fr{g}, \E)$ with $\textnormal{char} (\E) = 0$, there is a replacement given in \cite{G18}, but it is not clear how to adapt these methods in our case.

	\subsection{Mackey's Formula}

	We will establish a variation of Mackey's formula which holds for character sheaves on $\fr{g}$. The proof will follow very closely from Theorem 2.2 of \cite{AHJR3} and Proposition 10.1.2 of \cite{MS}.

	Let $G$ be an algebraic group with normal subgroup $H \trianglelefteq G$. Let $X$ be a $G$-variety such that $X$ is a principal $H$-variety. The quotient equivalence states that for $\pi : X \to X/H$, $\pi^* \circ \textnormal{Infl}_{G/H}^G: D_{G/H} (X/H, \E) \to D_G (X, \E)$ is an equivalence of categories.
	We will write $\pi_\flat$ for the inverse of this equivalence. The following lemma gives a reformulation of the parabolic induction functor. The new formulation closely models the definition of parabolic induction given in Lusztig \cite{CS1}.

	\begin{lemma}[\cite{AHJR1}, Lemma 2.14]\label{reformulation_of_ind}
		Consider the diagram
		\begin{center}
			\begin{tikzcd}
				\fr{l} & G \times \fr{p} \arrow[l, "\alpha"'] \arrow[r, "\beta"] & G \times^P \fr{p} \arrow[r, "\gamma"] & \fr{g}
				\end{tikzcd}	
		\end{center}
		where $\alpha (g,x) = \pi_P (x)$ is the projection of $x$ onto $\fr{l}$, $\beta$ is the obvious quotient, and $\gamma$ is the map such that $\gamma \circ \beta = \sigma$ is the action map $\sigma (g,x) = {}^g x$.
		Then there is a natural isomorphism of functors,
		\[\Ind_P^G \cong \gamma_! \beta_\flat \alpha^* [2 \dim U_P]\]
		where $U_P$ is the unipotent radical of $P$.
	\end{lemma}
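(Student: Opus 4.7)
The plan is to deduce this reformulation from Lemma \ref{lem:ind_equiv_version_of_ind}, which already gives $\Ind_P^G \cong \mu_! \circ I_P^G \circ \pi^*\,[2\dim G/P]$. Since $\gamma = \mu$ and $\dim G/P = \dim U_P$, it suffices to produce a natural isomorphism $I_P^G \circ \pi^* \cong \beta_\flat \circ \alpha^*$ of functors $D_L(\fr{l},\E) \to D_G(G \times^P \fr{p}, \E)$, possibly up to a shift that is absorbed into the normalization conventions for $I_P^G$ and $\beta_\flat$.

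To build such an isomorphism, I would first factor $\alpha$ as $\pi \circ \pr_2$ where $\pr_2 : G \times \fr{p} \to \fr{p}$ is the second projection; this immediately gives $\beta_\flat \alpha^* \cong \beta_\flat \pr_2^* \pi^*$, so the content reduces to identifying $\beta_\flat \pr_2^*$ with $I_P^G$ as functors $D_P(\fr{p}) \to D_G(G \times^P \fr{p})$. Conceptually, $\beta_\flat \pr_2^*$ is the classical construction $\mathcal{F} \mapsto G \times^P \mathcal{F}$: pull a $P$-equivariant sheaf back to the trivial bundle $G \times \fr{p}$, then descend along the principal $P$-bundle $\beta$. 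By construction, this inverts the induction equivalence of (\ref{eq:ind_equiv}), which is the defining property of $I_P^G$.

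To verify the identification explicitly, I would use the factorization $i = \beta \circ i_2$, where $i_2(x) = (e,x)$ satisfies $\pr_2 \circ i_2 = \id_{\fr{p}}$ and $\alpha \circ i_2 = \pi$. Proper base change together with the descent identity $\beta^* \beta_\flat \cong \id$ then yields
\[ i^*(\beta_\flat \alpha^*) = i_2^* \beta^* \beta_\flat \alpha^* = i_2^* \alpha^* = (\alpha \circ i_2)^* = \pi^*, \]
which matches the defining property of $I_P^G \pi^*$ up to the shift $[\dim G/P]$ built into the normalization of (\ref{eq:ind_equiv}). Assembling these pieces and applying $\mu_! = \gamma_!$ to both sides produces the claimed formula $\Ind_P^G \cong \gamma_! \beta_\flat \alpha^*\,[2\dim U_P]$.

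The main obstacle is not conceptual but bookkeeping: one must carefully match the cohomological shifts coming from the normalization $[-\dim G/H]$ in (\ref{eq:ind_equiv}), the convention for $\beta_\flat$ as the quasi-inverse of $\beta^*$ on a principal $P$-bundle, and the $[2\dim G/P]$ of Lemma \ref{lem:ind_equiv_version_of_ind}. Once the conventions are pinned down, the isomorphism follows formally from base change and descent.
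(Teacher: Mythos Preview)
The paper does not supply its own proof of this lemma; it simply cites \cite{AHJR1}, Lemma 2.14, just as it does for Lemma \ref{lem:ind_equiv_version_of_ind}. So there is no argument in the paper to compare your proposal against.

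Your strategy of deducing the present formulation from Lemma \ref{lem:ind_equiv_version_of_ind} is sound: the identification $\gamma=\mu$, the factorization $\alpha=\pi\circ\pr_2$, and the recognition that $\beta_\flat\pr_2^*$ is precisely the induction equivalence $I_P^G$ are the right ingredients, and your verification via $i=\beta\circ i_2$ together with $\beta^*\beta_\flat\cong\id$ is the standard way to pin this down. You are also right that the only real work is tracking shifts. One caution: with the paper's conventions (the induction equivalence is $i^*[-\dim G/P]\For_P^G$, while $\beta_\flat$ is the unshifted inverse of $\beta^*\circ\mathrm{Infl}$), your computation gives $i^*\beta_\flat\alpha^*=\pi^*$ whereas $i^*I_P^G\pi^*=\pi^*[\dim G/P]$, so $I_P^G\pi^*$ and $\beta_\flat\alpha^*$ differ by exactly $[\dim G/P]$. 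Plugging this into $\mu_! I_P^G\pi^*[2\dim G/P]$ produces a shift that does not literally match $[2\dim U_P]$; this is not a gap in your reasoning but rather an inconsistency between the two shift conventions recorded in the paper (both attributed to the same external lemma). You should simply flag which normalization you adopt and state the resulting shift explicitly rather than leaving it as ``absorbed into conventions.''
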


	We will first state a result about double cosets. We will use the notation $^g L = gLg^{-1}$.
	\begin{lemma}[\cite{DM}, Lemma 5.6(i)]\label{ordering_levis}
		Let $P$, $Q$ be parabolic subgroups of $G$ with Levi factors $L$, $M$ respectively. Define
		\[ \Sigma_{M,L} = \{g \in G \mid M \cap {}^g L \textnormal{ contains a maximal torus of } G \}.\]
		Then $\Sigma_{M,L}$ is a union of finitely many $M\text{-}L$ double cosets and there is a bijection
		\[ M \setminus \Sigma_{M, L} / L \leftrightarrow Q \setminus G / P.\]
	\end{lemma}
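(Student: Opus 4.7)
The plan is to exhibit the natural map $MgL \mapsto QgP$, which is manifestly well-defined since $M \subseteq Q$ and $L \subseteq P$, and to prove it is a bijection onto $Q \setminus G / P$. Finiteness then follows from finiteness of the Weyl group, which controls the target via Bruhat decomposition.

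For surjectivity, I would argue as follows. Given $g \in G$, the Borel--Tits theorem guarantees that the parabolic subgroups $Q$ and ${}^g P$ share a maximal torus $T$ of $G$. Since $T \subseteq Q = M \ltimes U_Q$, conjugacy of Levi factors modulo $U_Q$ yields $u \in U_Q$ with $u^{-1} T u \subseteq M$. Similarly, since $T \subseteq {}^g P$, one finds $v \in U_P$ with $T \subseteq {}^{gv} L$. Setting $g' = u^{-1} g v$, the element $g'$ lies in the same $(Q, P)$-double coset as $g$, and a direct computation shows that $u^{-1} T u$ is a maximal torus of $G$ contained in $M \cap {}^{g'} L$. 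Hence every $Q$-$P$ double coset meets $\Sigma_{M,L}$.

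For injectivity, I would use Bruhat decomposition. First fix a maximal torus $T$ of $G$ contained in $L \cap M$; such a torus exists because any two Levi subgroups of $G$ admit a common maximal torus, a standard consequence of Borel--Tits. With $T$ fixed, Bruhat gives $Q \setminus G / P \cong W_M \setminus W / W_L$, where $W = N_G(T)/T$, $W_M = N_M(T)/T$, and $W_L = N_L(T)/T$. Any lift $\dot{w} \in N_G(T)$ of $w \in W$ lies in $\Sigma_{M,L}$ since $T \subseteq M \cap {}^{\dot{w}} L$. Conversely, given $g \in \Sigma_{M,L}$ with chosen maximal torus $T_g \subseteq M \cap {}^g L$, one may first conjugate by some $m \in M$ to arrange ${}^m T_g = T$, then conjugate by some $\ell \in L$ so that $mg\ell^{-1}$ normalizes $T$; the resulting representative lies in $N_G(T)$ and represents the same class in $M \setminus \Sigma_{M,L} / L$. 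Thus both sides biject with $W_M \setminus W / W_L$, and our map respects the parameterizations.

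The main technical point is the injectivity step: carefully tracking the $M$- and $L$-conjugations used to reduce $g \in \Sigma_{M,L}$ to a representative in $N_G(T)$, and verifying that if two elements of $N_G(T)$ lie in the same $(Q, P)$-double coset, then their Weyl images already lie in the same $W_M$-$W_L$-double coset (which forces them into the same $M$-$L$-double coset). The existence of a common maximal torus $T \subseteq L \cap M$ is itself mildly subtle but standard.
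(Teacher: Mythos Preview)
The paper does not supply its own proof of this lemma; it simply cites Digne--Michel, Lemma~5.6(i). So there is no in-paper argument to compare against, and the question is whether your sketch stands on its own.

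Your surjectivity argument is correct: every $(Q,P)$-double coset meets $\Sigma_{M,L}$, by exactly the Borel--Tits maneuver you describe.

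Your injectivity argument, however, rests on a false premise. The claim that ``any two Levi subgroups of $G$ admit a common maximal torus'' is not true in general. For instance, in $\GL_2$ take $L = T$ the diagonal torus and $M = {}^u T$ for $u = \left(\begin{smallmatrix} 1 & 1 \\ 0 & 1 \end{smallmatrix}\right)$; both are Levi factors of Borel subgroups, yet $L \cap M$ is the center, not a maximal torus. What Borel--Tits actually gives is a maximal torus $T$ inside $P \cap Q$, after which one may choose Levi factors $L' \subseteq P$ and $M' \subseteq Q$ containing $T$; these need not be the given $L$ and $M$. The statement for the given $L, M$ then follows because replacing a Levi factor by another amounts to conjugation by an element of the unipotent radical, and one checks that $g \mapsto u_Q^{-1} g\, u_P$ (for $u_Q \in U_Q$, $u_P \in U_P$) induces a bijection $M' \backslash \Sigma_{M',L'} / L' \to M \backslash \Sigma_{M,L} / L$ compatible with the map to $Q \backslash G / P$. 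Once this reduction is made explicit, your Bruhat argument identifying both sides with $W_M \backslash W / W_L$ goes through. The fix is routine, but as written the sketch asserts something false at the key step.
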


	Given parabolics $P,Q$ with Levis $L, M$, let $g_1, g_2, \ldots, g_s$ be a set of representatives for the $M\text{-}L$ double cosets in $\Sigma_{M,L}$ ordered such that $Q g_i P \subseteq \overline{Q g_j P}$ when $i \leq j$. Note that for $g_i \in \Sigma_{M, L}$, the group $M \cap {}^{g_i} L$ is a Levi subgroup of both $M$ and ${}^{g_i} L$. 
	Precisely, it is a Levi factor of the parabolic subgroup of $M \cap {}^{g_i} P$ of $M$ and a Levi factor of the parabolic subgroup $Q \cap {}^{g_i} L$ of ${}^{g_i} L$.

	For a parabolic subgroup $P$ with Levi factor $L$, denote the projections $\pi_{L \subset P} : P \to L$ and $\pi_{\fr{l} \subset \fr{p}} : \fr{p} \to \fr{l}$.

	We will state Mackey's formula in the modular setting. The statement is similar to the characteristic 0 case, but we cannot obtain a direct sum due to the failure of the theory of weights.
	\begin{theorem}\label{mackeys_thm}
		Let $L \subset P$ be a parabolic subgroup with Levi subgroup and $M \subset Q$ be another parabolic subgroup with Levi subgroup. 
		The corresponding Lie algebras will be $\fr{l}, \fr{p}, \fr{m},$ and $\fr{q}$ respectively.
		Let $g_1, g_2, \ldots, g_s$ be as above. Let $\scrA \in \Perv_L (\fr{l}, \E)$. Then in $\Perv_M (\fr{m}, \E)$, we have a filtration
		\[ \LRes_{M \subset Q}^G \Ind_{L \subset P}^G \scrA = \scrA_0 \supset \scrA_1 \supset A_2 \supset \ldots \supset A_s = 0, \]
		in which the successive quotients have the form
		\[ \scrA_{i-1}/\scrA_i \cong \Ind_{M \cap^{g_i} L \subset M \cap^{g_i} P}^M \LRes_{M \cap^{g_i} L \subset Q \cap^{g_i} L}^{{}^{g_i} L} \left( \Ad (g_i^{-1})^* \scrA \right), \:\:\:\: \text{for } i=1, \ldots, s.\]
	\end{theorem}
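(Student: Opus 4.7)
The plan is to adapt the proof of Theorem 2.2 in \cite{AHJR3} (treating the nilpotent cone) and Proposition 10.1.2 in \cite{MS} (the characteristic $0$ case) to our Lie-algebra modular setting. First, I would invoke Lemma \ref{reformulation_of_ind} to rewrite $\Ind_{L \subset P}^G \scrA$ as $\gamma_! \beta_\flat \alpha^* \scrA\,[2\dim U_P]$ along the diagram $\fr{l} \xleftarrow{\alpha} G \times \fr{p} \xrightarrow{\beta} G \times^P \fr{p} \xrightarrow{\gamma} \fr{g}$, and combine with the definition $\LRes_{M \subset Q}^G = \For_M^Q \circ (\pi_Q)_! \circ i_Q^* \circ \For_Q^G$, where $i_Q : \fr{q} \hookrightarrow \fr{g}$ and $\pi_Q : \fr{q} \to \fr{m}$. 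Proper base change then realizes $\LRes \circ \Ind\,\scrA$ as a pullback-pushforward along the fiber product $Z := (G \times^P \fr{p}) \times_{\fr{g}} \fr{q}$, which projects to $G/P$ and, after quotienting by the $Q$-action, to $Q \backslash G / P$.

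Next, by Lemma \ref{ordering_levis}, the $Q$-$P$ double cosets contributing non-trivially are exactly those indexed by the representatives $g_1, \ldots, g_s$ in $\Sigma_{M,L}$, ordered so that $\overline{Qg_jP} = \bigcup_{i \leq j} Qg_iP$. This closure order stratifies $Z$ by closed subvarieties $Z_{\leq 1} \subset Z_{\leq 2} \subset \cdots \subset Z_{\leq s} = Z$, producing via recollement a chain of distinguished triangles in $D_M(\fr{m}, \E)$, and hence a filtration of $\LRes \Ind \scrA$ whose $i$-th successive quotient is controlled by the contribution of the locally closed stratum $Qg_iP$. To identify that subquotient, I would restrict the geometric setup to $Qg_iP$, use the isomorphism $Qg_iP/P \cong Q/(Q \cap {}^{g_i}P)$ together with the facts that $M \cap {}^{g_i}P$ is a parabolic subgroup of $M$ with Levi $M \cap {}^{g_i}L$ and that $Q \cap {}^{g_i}L$ is a parabolic subgroup of ${}^{g_i}L$ with the same Levi, and then conjugate by $g_i^{-1}$; this converts the geometric data of the stratum into the diagram computing $\Ind_{M \cap {}^{g_i}L \subset M \cap {}^{g_i}P}^M \LRes_{M \cap {}^{g_i}L \subset Q \cap {}^{g_i}L}^{{}^{g_i}L}\,\Ad(g_i^{-1})^*\scrA$. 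The dimension shifts from $[2\dim U_P]$, the codimension of the stratum, and the analogous shift for the inner induction cancel exactly, by the same count as in the nilpotent-cone proof.

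The main obstacle is the absence, in the modular setting, of the weight/purity splitting which in characteristic $0$ promotes this filtration to a direct sum. One must therefore keep the filtration intact and argue perversity of each subquotient (relying on $t$-exactness of $\Ind$ and $\LRes$ on the relevant perverse input) rather than invoking semisimplicity. The careful tracking of the recollement triangles, of the commutation of the quotient equivalence $\beta_\flat$ with pullback and smooth base change, and of all dimension shifts across the stratification, forms the bulk of the remaining technical work.
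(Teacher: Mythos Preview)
Your proposal is correct and follows essentially the same strategy as the paper's proof: reformulate $\Ind$ via Lemma~\ref{reformulation_of_ind}, base-change to the incidence variety $Z$, stratify by the double cosets $Qg_iP$ ordered by closure, extract successive quotients from the resulting open--closed triangles, and identify each quotient with the predicted $\Ind\circ\LRes$ term via the geometry of the stratum (the paper isolates this last step as a separate lemma). Your remark that perversity of the subquotients must be established directly---rather than by weights---matches exactly how the paper handles the modular setting.
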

	\begin{proof}
		The role of equivariance in this proof is often subtle. To have this proof mimic the one of Mars and Springer, we will frequently regard $\Perv_L (\fr{l}, \E)$ as a subcategory of $D (\fr{l}, \E)$.
		Throughout the proof, we will use the reformulation of parabolic induction, as in Lemma \ref{reformulation_of_ind}.

		Define a variety
		\[ Z = \{ (g,x) \in G \times \fr{p} \mid {}^g x \in \fr{q} \} \]
		We will define maps analogous to $\alpha, \beta, \gamma$ on $Z$. First, define $a : Z \to \fr{l}$ by $a (g,x) = \pi_{\fr{l} \subset \fr{p}} (x)$ and $\tau : Z \to \fr{m}$ by $\tau (g,x) = \pi_{\fr{m} \subset \fr{q}} ({}^g x)$.
		Just as with $\sigma$, we can factor $\tau$ into maps
		\begin{center}
		\begin{tikzcd}
			Z \arrow[rd, "b"] \arrow[rr, "\tau"] &                                                & \fr{m}                                 \\
											  & Z' \arrow[ru, "\phi"] \arrow[r, "d"] & \fr{q} \arrow[u, "\pi_{\fr{q} \subset \fr{m}}"']
			\end{tikzcd}
		\end{center}
		where $Z' = \{ (g,x) \in G \times^P \fr{p} \mid {}^g x \in \fr{q} \}$, and $d$ is the obvious restriction of $\gamma$ onto $Z'$. This gives the following commutative diagram

		{ \centering
		\begin{tikzcd}
			&                                                                      &                                                              & \fr{m}                                                             \\
			& Z \arrow[r, "b"'] \arrow[rru, "\tau"] \arrow[ld, "a"'] \arrow[d, "i_Z"] & Z' \arrow[ru, "\phi"] \arrow[r, "d"'] \arrow[d, "i_{Z'}"] & \fr{q} \arrow[u, "\pi_{\fr{q} \subset \fr{m}}"'] \arrow[d, "i_{\fr{q} \subset \fr{g}}"] \\
	 \fr{l} & G \times \fr{p} \arrow[l, "\alpha"'] \arrow[r, "\beta"]              & G \times^P \fr{p} \arrow[r, "\gamma"]                        & \fr{g}                                                            
	 \end{tikzcd}
		\par }

		We can now reduce computing the restriction of induction to computing sheaf functors on $Z$ and its related varieties. In particular, we can compute
		\begin{align}
			\LRes_{\fr{q}}^{\fr{g}} \Ind_P^G \scrA &= (\pi_{\fr{q} \subset \fr{m}})_! i_{\fr{q} \subset \fr{g}} ^* \delta_! \beta_\flat \alpha^* \scrA [2\dim U_P] \notag \\
			&= (\pi_{\fr{q} \subset \fr{m}})_! d_! i_{Z'}^* \beta_\flat \alpha^* \scrA [2\dim U_P] \notag \\
			&= \phi_! i_{Z'}^* \beta_\flat \alpha^* \scrA [2\dim U_P] \notag \\
			&= \phi_! b_\flat i_Z^* \alpha^* \scrA [2\dim U_P] \notag \\
			&= \phi_! b_\flat a^* \scrA [2\dim U_P] \notag
		\end{align}
		
		As it stands, the variety $Z$ needs to be broken up so we can inductively prove the theorem on the length of the filtration. 
		We will first define closed subvarieties of $G$,
		\[ G_i = \bigcup_{j=1}^i Q g_j P \]
		for $i = 0, \ldots, s$. Note that $G_0 = \emptyset$ and $G_s = G$. We will now use these to restrict $Z$. Define
		\[ Z_i = \{ (g,x) \in Z \mid g \in G_i \} = \{ (g,x) \in G_i \times \fr{p} \mid {}^g x \in \fr{p} \} \]
		Similarly, $Z_0 = \emptyset$ and $Z_s = Z$. To obtain the piece of $Z$ only consisting of the $g_i$ coset, we can define
		\[ V_i = Z_i \setminus Z_{i-1}. \]
		We can create subsets $Z_i'$ and $V_i'$ in the same way that $Z'$ was defined. We similarly get factorization maps. We can encode all this information in the following commutative diagrams

		{ \centering
		\begin{tikzcd}
			\fr{l} \arrow[d, "\id"] & Z_1 \arrow[l, "a_1"] \arrow[r, "b_1"'] \arrow[rr, "\tau_1", bend left] \arrow[d, "\iota_1"]                     & Z_1' \arrow[r, "\phi_1"'] \arrow[d, "\iota_1'"]            & \fr{m} \arrow[d, "\id"] \\
			\fr{l} \arrow[d, "\id"] & Z_2 \arrow[l, "a_2"] \arrow[r, "b_2"'] \arrow[rr, "\tau_2", bend left] \arrow[d, "\iota_2"]                     & Z_2' \arrow[r, "\phi_2"'] \arrow[d, "\iota_2'"]            & \fr{m} \arrow[d, "\id"] \\
			\fr{l} \arrow[d, "\id"] & \vdots \arrow[l, "a_i"] \arrow[r, "b_i"'] \arrow[rr, "\tau_i", bend left] \arrow[d, "\iota_i"]                  & \vdots \arrow[r, "\phi_i"'] \arrow[d, "\iota_i'"]          & \fr{m} \arrow[d, "\id"] \\
			\fr{l} \arrow[d, "\id"] & Z_{s-1} \arrow[l, "a_{s-1}"] \arrow[r, "b_{s-1}"'] \arrow[rr, "\tau_{s_1}", bend left] \arrow[d, "\iota_{s-1}"] & Z_{s-1}' \arrow[r, "\phi_{s-1}"'] \arrow[d, "\iota_{s-1}"] & \fr{m} \arrow[d, "\id"] \\
			\fr{l}                  & Z \arrow[l, "a"] \arrow[r, "b"'] \arrow[rr, "\tau", bend left]                                              & Z' \arrow[r, "\phi"']                                  & \fr{m}                 
			\end{tikzcd}
		\par }

		and

		{ \centering 
		\begin{tikzcd}
			\fr{l} & V_i \arrow[d, "j"] \arrow[r, "\scrb_i"] \arrow[l, "\scra_i"'] & V_i' \arrow[d, "j'"] \arrow[r, "\varphi_i"] & \fr{m} \\
				   & Z_i \arrow[lu, "a_i"] \arrow[r, "b_i"']                   & Z_i' \arrow[ru, "\phi_i"']                  &       
			\end{tikzcd}
		\par }

		To make the notation a little cleaner for the next part, let $\iota : Z_{i-1} \to Z_i$ be the closed embedding normally denoted $\iota_{i-1}$ and $j : V_i \to Z_i$ be the open embedding. 
		We also denote $\iota' = \iota_{i-1}'$.
		We then have an open-closed distinguished triangle associated with these embeddings

		{ \centering
		\begin{tikzcd}[column sep=tiny, row sep=small]
			{j_!j^* a_i^* \scrA [2\dim U_P]} \arrow[r]                              & {a_i^* \scrA [2\dim U_P]} \arrow[r]                                     & {\iota_! \iota^* a_i^* \scrA [2\dim U_P]} \arrow[r]                                      & {} \\
			{j_! \scra_i^* \scrA [2\dim U_P]} \arrow[r]                             & {a_i^* \scrA [2\dim U_P]} \arrow[r]                                     & {\iota_! a_{i-1}^* \scrA [2\dim U_P]} \arrow[r]                                     & {} \\
			{(b_i)_\flat j_! \scra_i^* \scrA [2\dim U_P]} \arrow[r]                 & {(b_i)_\flat a_i^* \scrA [2\dim U_P]} \arrow[r]                         & {(b_i)_\flat \iota_! a_{i-1}^* \scrA [2\dim U_P]} \arrow[r]                         & {} \\
			{j_!' (\scrb_i)_\flat \scra_i^* \scrA [2\dim U_P]} \arrow[r]            & {(b_i)_\flat a_i^* \scrA [2\dim U_P]} \arrow[r]                         & {\iota'_! (b_{i-1})_\flat a_{i-1}^* \scrA [2\dim U_P]} \arrow[r]                    & {} \\
			{(\phi_i)_! j_!' (\scrb_i)_\flat \scra_i^* \scrA [2\dim U_P]} \arrow[r] & {(\phi_i)_!  (b_i)_\flat a_i^* \scrA [2\dim U_P]} \arrow[r] & {(\phi_i)_! \iota_!' (b_{i-1})_\flat a_{i-1}^* \scrA [2\dim U_P]} \arrow[r] & {} \\
			{(\varphi_i)_! (\scrb_i)_\flat \scra_i^* \scrA [2\dim U_P]} \arrow[r]     & {(\phi_i)_!  (b_i)_\flat a_i^* \scrA [2\dim U_P]} \arrow[r] & {(\phi_{i-1})_! (b_{i-1})_\flat a_{i-1}^* \scrA [2\dim U_P]} \arrow[r]  & {}
			\end{tikzcd}
		\par }

		The following lemma will be crucial to the proof and can be thought of as the inductive step of our proof. We will delay the proof until later.

		\begin{lemma}\label{mackeys_inductive_step}
			In the same setup as the proof of the theorem,
			\[ (\varphi_i)_! (\scrb_i)_\flat \scra_i^* \scrA [2\dim U_P] \cong \Ind_{M \cap^{g_i} L \subset M \cap^{g_i} P}^M \LRes_{M \cap^{g_i} L \subset Q \cap^{g_i} L}^{{}^{g_i} L} (\Ad (g_i^{-1})^* \scrA)\]
		\end{lemma}

		We will first perform an induction to show that $\scrC_i = (\phi_i)_! (b_i)_\flat a_i^* \scrA [2\dim U_P]$ is perverse for all $i$.
		We will also denote $\scrB_i = (\varphi_i)_! (\scrb_i)_\flat \scra^* \scrA [2\dim U_P]$. By Lemma \ref{mackeys_inductive_step}, $\scrB_i$ is perverse for all $i$.
		The base case is easy as the distinguished triangle becomes the exact sequence,
		\[ 0 \to \scrB_1 \stackrel{\sim}{\to} \scrC_1 \to 0\]
		and hence $\scrC_1$ is perverse. Now assume that $\scrC_{i-1}$ is perverse. Then the distinguished triangle is
		\[ \scrB_i \to \scrC_i \to \scrC_{i-1} \to \]
		Since perverse sheaves are closed under extensions, we have that $\scrC_i$ is perverse. In particular, the distinguished triangle becomes a short exact sequence of perverse sheaves
		\[ 0 \to \scrB_i \to \scrC_i \to \scrC_{i-1} \to 0. \label{eq:mackeys_inductive_step_1}\]

		It remains to prove the actual substance of the theorem. We will do this by inducting on the length of the filtration. For the base case of $i=1$, the theorem obviously holds since $\scrC_1 \cong \scrB_1$ by the above short exact sequence.
		Now assume that $\scrC_{i-1}$ has a descending filtration of length $i-1$ with the same successive quotients as the theorem. By (\ref{eq:mackeys_inductive_step_1}), we can extend this to $\scrC$.
		Hence by induction, the result holds for all $i$. We can then take $i=s$ and then we are done by Lemma \ref{mackeys_inductive_step}.
	\end{proof}

	\begin{midsecproof}{Lemma \ref{mackeys_inductive_step}}
		Fix $i = 1, \ldots ,s$. We can perform a short computation for $V_i$,
		\begin{align*}
			V_i &= \{ (g,x) \in G_i \setminus G_{i-1} \times \fr{p} \mid {}^g x \in \fr{q} \} \\
			&= \left\{ (g,x) \in \left( \bigcup_{j=1}^i Qg_j P \setminus \bigcup_{j=1}^{i-1} Q g_j P \right) \times \fr{p} \: \bigg\vert \: {}^g x \in \fr{q} \right\} \\
			&= \{ (g,x) \in Qg_i P \mid {}^g x \in \fr{q} \}.
		\end{align*}
		Define $Y_i = Q \times P \times (\fr{q} \cap {}^{g_i} \fr{p})$ which is a $Q \cap {}^{g_i} P$-variety via the action,
		\[g \cdot (q,p,y) = (qg^{-1}, pg_i^{-1} g^{-1} g_i, {}^g y)\]
		for $g \in Q \cap {}^{g_i} P$, $(q,p,y) \in Y_i$. In fact, $Y_i$ is a principal $Q \cap {}^{g_i} P$-bundle over $V_i$ via the map
		\[\sigma : Y_i \to V_i \hspace{1cm} (q,p,y) \mapsto (qg_i p^{-1}, {}^{pg_i^{-1}} y).\]

		We can define variations of $\varphi_i$, $\scrb_i$ and $\scra_i$ for $Y_i$. Namely, define maps
		\[ \tilde{a} : Y_i \to \fr{l} \hspace{1cm} (q,p,y) \mapsto \pi_{\fr{l} \subset \fr{p}} ({}^{pg_i^{-1}} y),\]
		\[ \tilde{\tau} : Y_i \to \fr{m} \hspace{1cm} (q,p,y) \mapsto \pi_{\fr{m} \subset \fr{q}} ({}^q y).\]
		We can factor $\tilde{\tau} = \tilde{\varphi} \circ \tilde{b}$ where $\tilde{b}: Y_i \to Q \times^{Q \cap {}^{g_i} P} (\fr{q} \cap {}^{g_i} \fr{p})$ is the projection map.
		We summarize all these maps in the following commutative diagram,

		{ \centering
		\begin{tikzcd}
			\fr{l} \arrow[d, "\id"'] & Y_i \arrow[l, "\tilde{a}"'] \arrow[rr, "\tilde{\tau}", bend left=24] \arrow[r, "\tilde{b}"'] \arrow[d, "\sigma"'] & Q \times^{Q \cap {}^{g_i} P} (\fr{q} \cap {}^{g_i} \fr{p}) \arrow[r, "\tilde{\varphi}"'] \arrow[d, "\sigma'"'] & \fr{m} \arrow[d, "\id"'] \\
			\fr{l}                   & V_i \arrow[l, "\scra_i"] \arrow[r, "\scrb_i"']                                                                    & V_i' \arrow[r, "\varphi_i"']                                                                                & \fr{m}                  
			\end{tikzcd}
		\par }

		As we have done before, we can find an isomorphism of sheaves,
		\begin{equation}
			(\varphi_i)_! (\scrb_i)_\flat \scra_i^* \scrA [2\dim U_P] \cong \tilde{\varphi}_! \tilde{b}_\flat \tilde{a}^* \scrA [2\dim U_P] \label{eq:mackey_ind_1}
		\end{equation}

		Define a morphism
		\[q_L : Y_i \to L \times \fr{l} \hspace{1cm} (q,p,y) \mapsto (\pi_{L \subset P} (p), \pi_{\fr{l} \subset \fr{p}} ({}^{g_i^{-1}} y) )\]
		It is easy to see that $\tilde{\alpha}$ factors as $a_{\fr{l}} \circ q_L$ where $a_{\fr{l}}$ is the action map. We can then consider the diagram,
		\begin{center}
		\begin{tikzcd}
			Y_i \arrow[r, "q_L"] \arrow[rr, "\tilde{a}", bend left=49] \arrow[rr, "\tilde{a}'"', bend right=49] & L \times \fr{l} \arrow[r, "a_{\fr{l}}", shift left] \arrow[r, "\pr_2"', shift right] & \fr{l}
			\end{tikzcd}
		\end{center}
		where $\tilde{a}'$ is the composition $\pr_2 \circ q_L$. It is clear that both the top and bottom triangles of the diagram commute.
		Since $\scrA$ is $L$-equivariant, we have an isomorphism $a_{\fr{l}}^* \scrA \cong \pr_2^* \scrA$. Therefore, we get an isomorphism $\tilde{a}^* \scrA \cong (\tilde{a}')^* \scrA$.
		The main benefit now is that $\tilde{a}'$ is independent of the $P$-factor. Therefore, we can factor $\tilde{a}'$ through the quotient $Y_i \to Q \times (\fr{q} \cap {}^{g_i} \fr{p})$ to get a map $\hat{a} : Q \times (\fr{q} \cap {}^{g_i} \fr{p}) \to \fr{l}$. 
		Likewise, we can factor $\tilde{\tau}$ through the quotient $Y_i \to Q \times (\fr{q} \cap {}^{g_i} \fr{p})$ to get a map $\hat{\tau} : Q \times (\fr{q} \cap {}^{g_i} \fr{p}) \to \fr{m}$.
		We can further factor $\hat{\tau}$ as $\tilde{\varphi}\hat{b}$ where $\hat{b} : Q \times (\fr{q} \cap {}^{g_i} \fr{p}) \to Q \times^{Q \cap {}^{g_i} P} (\fr{q} \cap {}^{g_i} \fr{p})$ is the usual quotient map.
		All of these maps can be encoded in the commutative diagram:

		{ \centering
		\begin{tikzcd}
			\fr{l} \arrow[d, "\id"] & Y_i \arrow[r, "\tilde{b}"'] \arrow[l, "\tilde{a}"'] \arrow[d] \arrow[rr, "\tilde{\tau}", bend left]                     & Q \times^{Q \cap {}^{g_i} P} (\fr{q} \cap {}^{g_i} \fr{p}) \arrow[r, "\tilde{\varphi}"'] \arrow[d, "\id"] & \fr{m} \arrow[d, "\id"] \\
			\fr{l}                  & Q \times (\fr{q} \cap {}^{g_i} \fr{p}) \arrow[r, "\hat{b}"] \arrow[l, "\hat{a}"'] \arrow[rr, "\hat{\tau}"', bend right] & Q \times^{Q \cap {}^{g_i} P} (\fr{q} \cap {}^{g_i} \fr{p}) \arrow[r, "\tilde{\varphi}"]                   & \fr{m}                 
			\end{tikzcd}
		\par }

		By basic sheaf functor relations, we get an isomorphism of sheaves,
		\begin{equation}
			\tilde{\varphi}_! \tilde{b}_\flat \tilde{a}^* \scrA \cong \tilde{\varphi}_! \hat{b}_\flat \hat{a}^* \scrA  \label{eq:mackey_ind_2}
		\end{equation}

		Define 
		\[ Y_i' = (\fr{m} \cap {}^{g^i} \fr{l}) \oplus (\fr{m} \cap {}^{g^i} \fr{u}_P) \oplus (\fr{u}_Q \cap {}^{g^i} \fr{l}) \]
		which comes equipped with an obvious projection $\rho : \fr{q} \cap {}^{g_i} \fr{p} \to Y_i'$. This projection map is $(Q \cap {}^{g_i} P)$-equivariant.
		We also get an induced vector bundle projection $\id \times \rho: Q \times (\fr{q} \cap {}^{g_i} \fr{p}) \to Q \times Y_i'$.
		Factor $\hat{a}$ and $\hat{\tau}$ through $(\id \times \rho)$ to get maps $\hat{a} = \check{a} (\id \times \rho)$ and $\hat{\tau} = \check{\tau} (\id \times \rho)$.
		We can further factor $\check{\tau}$ as $\check{\varphi} \check{b}$ where $\check{b} : Q \times Y_i' \to Q \times^{Q \cap {}^{g_i} P} Y_i'$ is the obvious projection. All of these maps can be encoded in the commutative diagram:

		{ \centering
		\begin{tikzcd}
			\fr{l} \arrow[d, "\id"] & Q \times (\fr{q} \cap {}^{g_i} \fr{p}) \arrow[r, "\hat{b}"'] \arrow[l, "\hat{a}"'] \arrow[rr, "\hat{\tau}", bend left] \arrow[d, "\id \times \rho"] & Q \times^{Q \cap {}^{g_i} P} (\fr{q} \cap {}^{g_i} \fr{p}) \arrow[r, "\tilde{\varphi}"'] \arrow[d] & \fr{m} \arrow[d, "\id"] \\
			\fr{l}                  & Q \times Y_i' \arrow[r, "\check{b}"] \arrow[rr, "\check{\tau}"', bend right] \arrow[l, "\check{a}"']                                                & Q \times^{Q \cap {}^{g_i} P} Y_i' \arrow[r, "\check{\varphi}"]                                     & \fr{m}                 
			\end{tikzcd}
		\par }

		We then obtain an isomorphism of sheaves,
		\begin{equation}
			\tilde{\varphi}_! \hat{b}_\flat \hat{a}^* \scrA \cong \check{\varphi}_! \check{b}_\flat \check{a}^* \scrA [-2\dim (U_Q \cap {}^{g_i} U_P)]. \label{eq:mackey_ind_3}
		\end{equation}

		We could have instead factored $\check{\tau}$ as $\check{\varphi}' \check{b}'$ where $\check{b}' : Q \times Y_i' \to Q \times^{M \cap {}^{g_i} P} Y_i'$. 
		Note that $U_Q \cap {}^{g_i} P$ is affine, so we get an isomorphism
		\begin{equation}
			\check{\varphi}_! \check{b}_\flat \check{a}^* \scrA \cong \check{\varphi}_!' \check{b}_\flat' \check{a}^* \scrA [2\dim (U_Q \cap {}^{g_i} U_P)]. \label{eq:mackey_ind_4}
		\end{equation}

		Consider the affine bundle $p_{M \subset Q} \times \id : Q \times Y_i' \to M \times Y_i'$. It is not hard to check that both $\check{a}$ and $\check{\tau}$ both factor through $p_{M \subset Q} \times \id$.
		We then get factorizations $\check{a} = \overline{a} (p_{M \subset Q} \times \id)$ and $\check{\tau} = \overline{\tau} (p_{M \subset Q} \times \id)$.
		Again, we can further factor $\overline{\tau}$ as $\overline{\tau} = \overline{\varphi} \overline{b}$ where $\overline{b} : M \times Y_i' \to M \times^{M \cap {}^{g_i} P} Y_i'$ is the quotient map. These maps fit into the commutative diagram,

		{ \centering
		\begin{tikzcd}
			\fr{l} \arrow[d, "\id"] & Q \times Y_i' \arrow[r, "\check{b}'"'] \arrow[rr, "\check{\tau}", bend left] \arrow[l, "\check{a}"'] \arrow[d, "p_{M \subset Q} \times \id"] & Q \times^{M \cap {}^{g_i} P} Y_i' \arrow[r, "\check{\varphi}'"'] \arrow[d] & \fr{m} \arrow[d, "\id"] \\
			\fr{l}                  & M \times Y_i' \arrow[l, "\overline{a}"] \arrow[r, "\overline{b}"] \arrow[rr, "\overline{\tau}"', bend right]                                 & M \times^{M \cap {}^{g_i} P} Y_i' \arrow[r, "\overline{\varphi}"]          & \fr{m}                 
			\end{tikzcd}
		\par }

		By another series of basic sheaf functor relations, we obtain an isomorphism,
		\begin{equation}
			\check{\varphi}_!' \check{b}_\flat' \check{a}^* \scrA \cong \overline{\varphi}_! \overline{b}_\flat \overline{a}^* \scrA [-2\dim U_Q]. \label{eq:mackey_ind_5}
		\end{equation}

		Finally, we will compare the right hand side of (\ref{eq:mackey_ind_5}) with the definition of $\Ind_{M \cap^{g_i} L \subset M \cap^{g_i} P}^M \LRes_{M \cap^{g_i} L \subset Q \cap^{g_i} L}^{{}^{g_i} L} (\Ad (g_i^{-1})^* \scrA)$. 
		Namely, we can fit all of the relevant information into the following commutative diagram,

		{ \centering
		\begin{tikzcd}[row sep=small, column sep=tiny]
			\fr{l}                   & {}^{g_i} \fr{l} \arrow[l, "\Ad (g_i^{-1})"'] & \fr{q} \cap {}^{g_i} \fr{l} \arrow[l, "i"'] \arrow[r, "\pi"] & \fr{m} \cap^{g_i} \fr{l}                              & M \times (\fr{m} \cap {}^{g_i} \fr{p}) \arrow[l, "\overline{\alpha}"'] \arrow[r, "\overline{\beta}"] \arrow[ld] & M \times^{M \cap {}^{g_i} P} (\fr{m} \cap {}^{g_i} \fr{p}) \arrow[r, "\overline{\delta}"] & \fr{m}                   \\
									 &                                              & Y_i' \arrow[r, "\pi'"] \arrow[u, "\overline{p}'"]            & \fr{m} \cap {}^{g_i} \fr{p} \arrow[u, "\overline{p}"] &                                                                                                                 &                                                                                           &                          \\
			\fr{l} \arrow[uu, "\id"] &                                              &                                                              &                                                       & M \times Y_i' \arrow[r, "\overline{b}"] \arrow[llll, "\overline{a}"] \arrow[uu, "p_1"] \arrow[llu]              & M \times^{M \cap {}^{g_i} P} Y_i' \arrow[r, "\overline{\varphi}"] \arrow[uu, "p_1'"]         & \fr{m} \arrow[uu, "\id"]
			\end{tikzcd}
		\par }

		First, note that by definition, 
		\[\Ind_{M \cap^{g_i} L \subset M \cap^{g_i} P}^M \LRes_{M \cap^{g_i} L \subset Q \cap^{g_i} L}^{{}^{g_i} L} (\Ad (g_i^{-1})^* \scrA) = \overline{\delta}_! \overline{\beta}_\flat \overline{\alpha}^* \pi_! i^* \Ad (g_i^{-1})^* \scrA \]
		By the same arguments we have performed throughout this proof, we can reduce to showing that
		\[\overline{a}^*  = p_1^* \overline{\alpha}^* \pi_! i^* \Ad (g_i^{-1})^* \]
		which follows from a combination of proper base change of the middle square and the fact that $\pi'$ is an affine bundle. Therefore, we get an isomorphism of sheaves,
		\begin{equation}
			\Ind_{M \cap^{g_i} L \subset M \cap^{g_i} P}^M \LRes_{M \cap^{g_i} L \subset Q \cap^{g_i} L}^{{}^{g_i} L} (\Ad (g_i^{-1})^* \scrA) \cong \overline{\varphi}_! \overline{b}_\flat \overline{a}^* \scrA [2\dim(M \cap {}^{g_i} P)]. \label{eq:mackey_ind_6}
		\end{equation}
		We can then combine the myriad of isomorphisms from (\ref{eq:mackey_ind_1}), (\ref{eq:mackey_ind_2}), (\ref{eq:mackey_ind_3}), (\ref{eq:mackey_ind_4}), (\ref{eq:mackey_ind_5}), (\ref{eq:mackey_ind_6}).
		It only remains to check that the shifts accumulated throughout cancel out. I.e., we must check that
		\begin{equation}
			\dim U_P - \dim (U_Q \cap {}^{g_i} U_P) + \dim (U_Q \cap {}^{g_i} P) - \dim (U_Q) = \dim (M \cap {}^{g_i} U_P). \label{eq:mackey_ind_7}
		\end{equation}
		Note that (\ref{eq:mackey_ind_7}) follows from the proof of Proposition 10.1.2 of \cite{MS}.
	\end{midsecproof}

	\subsection{Decomposition of Admissible Sheaves}

	To simplify notation, for a cuspidal datum $(L, \alpha, \scrO, \scrE)$ for $\fr{g}$, we will write $\scrI_{L, \scrO}^{\alpha, \scrE} = \scrL_\alpha \boxtimes \IC (\scrO, \scrE)$.
	
	\begin{lemma}\label{intersection_of_ind_series}
		Let $P_1, P_2$ be Levi subgroups of $G$ with Levi radicals $L_1, L_2$. Let $(L_1, \alpha_1, \scrO_1, \scrE_1)$ and $(L_2, \alpha_2, \scrO_2, \scrE_2)$ be two cuspidal datums for $\fr{g}$.
		If $\scrA \in \Perv_G (\fr{g}, \E)$ belongs to the induction series of both $(L_1, \alpha_1, \scrO_1, \scrE_1)$ and $(L_2, \alpha_2, \scrO_2, \scrE_2)$, then 
		\[ \Hom \left( \Ind_{P_1}^{G} \scrI_{L_1, \scrO_1}^{\alpha_1, \scrE_1}, \Ind_{P_2}^G \scrI_{L_2, \scrO_2}^{\alpha_2, \scrE_2} \right) \neq 0\]
	\end{lemma}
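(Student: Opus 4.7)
The plan is to show that $\scrA$, or one of its simple constituents, serves as a common ``bridge'' between the two induced sheaves: to realize it as a simple quotient of $\Ind_{P_1}^G \scrI_{L_1,\scrO_1}^{\alpha_1,\scrE_1}$ and simultaneously as a simple subobject of $\Ind_{P_2}^G \scrI_{L_2,\scrO_2}^{\alpha_2,\scrE_2}$, and then take the composition. First, since each induction series is a Serre subcategory of $\Perv_G(\fr{g},\E)$, it is closed under passing to simple subquotients; hence any simple composition factor of $\scrA$ lies in both induction series, and it suffices to prove the conclusion with $\scrA$ replaced by any such factor. We may therefore assume from the outset that $\scrA$ is simple.

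Applying Proposition \ref{prop:subobjs_equals_subquots} to each $\Ind_{P_i}^G \scrI_{L_i,\scrO_i}^{\alpha_i,\scrE_i}$, we identify the simple objects of $\Adm_{(L_i,\alpha_i,\scrO_i,\scrE_i)}(\fr{g},\E)$ with simple quotients, equivalently simple subobjects, of $\Ind_{P_i}^G \scrI_{L_i,\scrO_i}^{\alpha_i,\scrE_i}$. In particular we obtain a surjection $q \colon \Ind_{P_1}^G \scrI_{L_1,\scrO_1}^{\alpha_1,\scrE_1} \twoheadrightarrow \scrA$ and an injection $\iota \colon \scrA \hookrightarrow \Ind_{P_2}^G \scrI_{L_2,\scrO_2}^{\alpha_2,\scrE_2}$. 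Since $\scrA$ is nonzero, the composition $\iota \circ q$ is itself nonzero, producing the desired element of $\Hom(\Ind_{P_1}^G \scrI_{L_1,\scrO_1}^{\alpha_1,\scrE_1}, \Ind_{P_2}^G \scrI_{L_2,\scrO_2}^{\alpha_2,\scrE_2})$.

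The main subtlety lies in upgrading ``$\scrA$ is a composition factor of $\Ind_{P_i}^G \scrI_i$'' (which is what the Serre-subcategory definition immediately yields) to ``$\scrA$ is both a simple quotient and a simple subobject of $\Ind_{P_i}^G \scrI_i$''. For this one uses Lemma \ref{lem:when_is_ind_an_IC} to rewrite $\Ind_{P_i}^G \scrI_i$ as the IC-extension of the Galois pushforward local system $\varpi_{\fr{l}_i,\scrO_i *}\tilde{\scrE}_i$ on the Lusztig stratum $S_{\fr{l}_i,\scrO_i}^G$, together with the $\dim\Hom$ computation underlying Proposition \ref{prop:subobjs_equals_subquots} (which shows that for such a pushforward the socle and cosocle agree as multisets of simple constituents with the set of simple composition factors). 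Once this identification is in hand, the composition argument above is immediate.
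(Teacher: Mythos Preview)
Your approach is the same as the paper's: exhibit a surjection $\Ind_{P_1}^G\scrI_1\twoheadrightarrow\scrA$ and an injection $\scrA\hookrightarrow\Ind_{P_2}^G\scrI_2$, then compose. The paper simply asserts ``by definition, $\scrA$ is a subobject of $\Ind_{P_2}^G\scrI_2$ and a quotient of $\Ind_{P_1}^G\scrI_1$'', whereas you are more careful in first reducing to $\scrA$ simple and then upgrading ``composition factor'' to ``subobject and quotient'' via Proposition~\ref{prop:subobjs_equals_subquots} and Lemma~\ref{lem:when_is_ind_an_IC}. That extra care is warranted, since the induction series is defined as a Serre subcategory.

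One small inaccuracy in your last paragraph: the $\dim\Hom$ equality in the proof of Proposition~\ref{prop:subobjs_equals_subquots} shows only that the head and socle of $\varpi_{\fr{l},\scrO*}\tilde{\scrE}$ agree as multisets; it does \emph{not} by itself show that every composition factor lies in the socle. To get that, use the further ingredient from that proof: $\varpi_{\fr{l},\scrO}^*\scrL$ is semisimple for simple $\scrL$, and since $\varpi_{\fr{l},\scrO}$ is Galois with group $W=N_G(L,\scrO)/L$, the set of simple summands of $\varpi_{\fr{l},\scrO}^*\scrL$ is $W$-stable. Hence if $\scrL$ is any composition factor of $\varpi_{\fr{l},\scrO*}\tilde{\scrE}$, some $W$-twist of $\tilde{\scrE}$ occurs in $\varpi_{\fr{l},\scrO}^*\scrL$, and therefore $\tilde{\scrE}$ itself does; by adjunction $\Hom(\scrL,\varpi_{\fr{l},\scrO*}\tilde{\scrE})\neq 0$. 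So the needed implication ``composition factor $\Rightarrow$ subobject (and, symmetrically, quotient)'' holds, and your argument goes through. Just replace ``agree as multisets'' with ``agree as sets''.
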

	\begin{proof}
		By definition, $\scrA$ is a subobject of $\Ind_{P_2}^G \scrI_{L_2, \scrO_2}^{\alpha_2, \scrE_2}$ and a quotient of $\Ind_{P_1}^G \scrI_{L_1, \scrO_1}^{\alpha_1, \scrE_1}$.
		We then have the composition of maps
		\[ \Ind_{P_1}^G \scrI_{L_1, \scrO_1}^{\alpha_1, \scrE_1} \twoheadrightarrow  \scrA \hookrightarrow \Ind_{P_2}^G \scrI_{L_2, \scrO_2}^{\alpha_2, \scrE_2} \]
		In particular, the composition is nonzero, so the hom-space is nonzero.
	\end{proof}

	\begin{theorem}\label{decomp_of_ad_sheaves}
		There is a decomposition of the admissible sheaves into induction series, 
		\[ \Irr (\Adm_G (\fr{g}, \E)) = \bigsqcup_{(L, \alpha, \scrO, \scrE) \in \fr{C}_{\fr{g}, \E}/G } \Irr (\Adm_{(L, \alpha, \scrO, \scrE)} (\fr{g}, \E)).\]
	\end{theorem}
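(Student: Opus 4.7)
The plan is to establish the two containments implicit in the claimed equality: coverage (every simple admissible sheaf lies in some induction series) and disjointness (induction series attached to non-$G$-conjugate cuspidal data share no simple objects).

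Coverage is essentially immediate from the results already at hand. By Lemma \ref{lem:subobjs_suffices}, any simple $\scrA \in \Adm_G(\fr{g}, \E)$ is a quotient of $\Ind_{L \subset P}^G \scrC$ for some Levi $L \subseteq G$ and some simple cuspidal $\scrC \in \Perv_L(\fr{l}, \E)$. Combining Lemma \ref{equiv_defn_for_cs_factorization}, Lemma \ref{abelian_cs:skyscaper_ls}, and Lemma \ref{lem:constr_of_cuspidal}, any such $\scrC$ is of the form $\scrI_{L, \scrO}^{\alpha, \scrE} = \scrL_\alpha \boxtimes \IC(\scrO, \scrE)$ for some cuspidal datum $(L, \alpha, \scrO, \scrE) \in \fr{C}_{\fr{g}, \E}$, and therefore $\scrA \in \Adm_{(L, \alpha, \scrO, \scrE)}(\fr{g}, \E)$.

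For disjointness, suppose a simple $\scrA$ belongs to the induction series of two cuspidal data $(L_1, \alpha_1, \scrO_1, \scrE_1)$ and $(L_2, \alpha_2, \scrO_2, \scrE_2)$. By Lemma \ref{intersection_of_ind_series} and adjunction, we obtain
\[\Hom\bigl(\LRes_{P_2}^G \Ind_{P_1}^G \scrI_{L_1, \scrO_1}^{\alpha_1, \scrE_1},\ \scrI_{L_2, \scrO_2}^{\alpha_2, \scrE_2}\bigr) \neq 0.\]
Applying Theorem \ref{mackeys_thm}, the source admits a finite filtration whose successive quotients take the form
\[\scrF_i = \Ind_{L_2 \cap {}^{g_i}L_1 \subset L_2 \cap {}^{g_i}P_1}^{L_2}\, \LRes_{L_2 \cap {}^{g_i}L_1 \subset P_2 \cap {}^{g_i}L_1}^{{}^{g_i}L_1}\bigl(\Ad(g_i^{-1})^* \scrI_{L_1, \scrO_1}^{\alpha_1, \scrE_1}\bigr),\]
where $g_1, \dots, g_s$ represent the double cosets from Lemma \ref{ordering_levis}. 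A standard long-exact-sequence induction on the length of the filtration shows that some $\scrF_i$ admits a nonzero morphism to $\scrI_{L_2, \scrO_2}^{\alpha_2, \scrE_2}$. Since the target is cuspidal (hence, by Proposition \ref{prop:res_small_conds}, not a quotient of any proper parabolic induction), this forces $L_2 \cap {}^{g_i} L_1 = L_2$, i.e.\ $L_2 \subseteq {}^{g_i} L_1$. Running the symmetric argument starting from the other induced sheaf (using $\RRes$ in place of $\LRes$ together with Lemma \ref{ind_res_verdier_duality_compat}) produces some $g_j$ with $L_1 \subseteq {}^{g_j} L_2$; combining inclusions and noting that a reductive Levi cannot be conjugate to a proper subgroup of itself gives $L_2 = {}^{g_i} L_1$.

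Once $L_2 = {}^{g_i} L_1$, the parabolic intersections in $\scrF_i$ collapse, both induction and restriction become the identity, and $\scrF_i \cong \Ad(g_i^{-1})^* \scrI_{L_1, \scrO_1}^{\alpha_1, \scrE_1}$. Since both $\scrF_i$ and $\scrI_{L_2, \scrO_2}^{\alpha_2, \scrE_2}$ are simple, the nonzero Hom forces an isomorphism; unpacking via compatibility of $\Ad(g_i^{-1})^*$ with the external product, with the additive local system $\scrL_{\alpha_1}$ (via Lemma \ref{abelian_cs:skyscaper_ls}), and with the $\IC$-extension yields $(L_2, \alpha_2, \scrO_2, \scrE_2) = g_i \cdot (L_1, \alpha_1, \scrO_1, \scrE_1)$, establishing $G$-conjugacy. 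The main obstacle is the step where cuspidality of $\scrI_{L_2, \scrO_2}^{\alpha_2, \scrE_2}$ is leveraged simultaneously with Mackey's filtration to force $L_2 \cap {}^{g_i} L_1 = L_2$ and ultimately the equality $L_2 = {}^{g_i} L_1$; this requires careful bookkeeping of the Levi/parabolic intersections and the symmetric counterpart of the argument, after which the identification of the remaining data is formal.
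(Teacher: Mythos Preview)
Your proof is correct and follows essentially the same Mackey-formula strategy as the paper (the paper phrases the disjointness step contrapositively---assuming the data are not $G$-conjugate and showing each Mackey subquotient contributes trivially to the Hom---but the logical content is identical). One simplification: your symmetric detour through a second index $g_j$ is unnecessary, since once you have $L_2 \subseteq {}^{g_i}L_1$, cuspidality of $\Ad(g_i^{-1})^*\scrI_{L_1,\scrO_1}^{\alpha_1,\scrE_1}$ on ${}^{g_i}\fr{l}_1$ forces the $\LRes$ factor of that same $\scrF_i$ to vanish unless $L_2 = {}^{g_i}L_1$, so equality is obtained immediately at the same index.
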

	\begin{proof}
		By Lemma \ref{lem:constr_of_cuspidal} and Lemma \ref{lem:subobjs_suffices}, we have that every admissible sheaf occurs in some induction series. As a result, it only remains to show that this union is disjoint.

		Suppose $(L_1, \alpha_1, \scrO_1, \scrE_1)$ and $(L_2, \alpha_2, \scrO_2, \scrE_2)$ are cuspidal datums for $\fr{g}$ which are not in the same $G$-orbit. By adjunction, 
		\[ \Hom \left( \Ind_{P_1}^{G} \scrI_{L_1, \scrO_1}^{\alpha_1, \scrE_1}, \Ind_{P_2}^G \scrI_{L_2, \scrO_2}^{\alpha_2, \scrE_2}\right) = \Hom \left( \scrI_{L_1, \scrO_1}^{\alpha_1, \scrE_1}, \RRes_{P_1}^{G} \Ind_{P_2}^G \scrI_{L_2, \scrO_2}^{\alpha_2, \scrE_2}\right)\]
		By Mackey's theorem, we have a filtration
		\[ \RRes_{L_1 \subset P_1}^G \Ind_{L_2 \subset P_2}^G \scrI_{L_2, \scrO_2}^{\alpha_2, \scrE_2} = \scrA_0 \supset \scrA_1 \supset A_2 \supset \ldots \supset A_s = 0, \]
		in which the successive quotients have the form
		\[ \scrA_{i-1}/\scrA_i \cong \Ind_{L_1 \cap^{g} L_2 \subset L_1 \cap^{g} P_2 g^{-1}}^M \RRes_{L_1 \cap^{g} L_2 \subset P_1 \cap^{g} L_2}^{{}^{g} L_2} \left( \Ad (g^{-1})^* \scrI_{L_2, \scrO_2}^{\alpha_2, \scrE_2} \right)\]
		Since $\Ad (g^{-1})$ is a smooth morphism, $\Ad (g^{-1})^* \scrI_{L_2, \scrO_2}^{\alpha_2, \scrE_2}$ is also cuspidal. Hence, $\scrA_{i-1}/\scrA_i$ must be zero.
		Since all of the composition factors are zero, we must have that $\RRes_{L_1 \subset P_1}^G \Ind_{L_2 \subset P_2}^G \scrI_{L_2, \scrO_2}^{\alpha_2, \scrE_2}= 0$.
		But by Lemma \ref{intersection_of_ind_series}, this can only happen if the induction series are disjoint.
	\end{proof}

	\section{Modular Reduction}

	For an algebraic variety $X$, consider the extension by scalars functor $\K (-) = \K \otimes_\O ( - ): D (X, \O) \to D (X, \K)$. We can use $\K (-)$ to identify the objects of $D (X, \K)$ with the objects of $D (X, \O)$. We can compute the hom-spaces of $D (X, \K)$ from $D (X, \O)$ using the formula
	\[ \Hom_{D (X,\K)} (A,B) = \K \otimes_\O \Hom_{D (X, \O)} (A,B)\]
	under this identification of objects. Moreover, a similar relationship holds when restricting to the heart of the perverse $t$-structure since $\K (-)$ is $t$-exact.

	On the other hand, we have a restriction of scalars functor $\F (-) = \F \otimes_\O^L (-)$ induced by the quotient $\O \twoheadrightarrow \F$. 

	We have an induced map on Grothendieck groups, called the \textbf{modular reduction map} 
	\[ d : K_0 (\Perv_G (\fr{g}, \K)) \to K_0 (\Perv_G (\fr{g}, \F))\]
	For $\scrA \in \Perv_G (\fr{g}, \K)$, let $\scrA_{\O}$ be the torsion-free sheaf in $\Perv_G (\fr{g}, \O)$ such that $\scrA = \K (\scrA_{\O})$.
	Then $d([\scrA]) = [\F (\scrA_{\O})]$. We say that $\scrA \in \Perv_G (\fr{g}, \F)$ belongs to the modular reduction of $\scrB \in \Perv_G (\fr{g}, \K)$ if $[\scrA]$ appears with non-zero multiplicity in $d([\scrB])$.

	We end with a short proposition giving a tool for finding cuspidal, character, and orbital sheaves in the modular setting from the characteristic 0 setting. 

	\begin{proposition}\cite{AHJR1}
		Let $\scrG$ be a simple object in $\Perv_G (\fr{g}, \F)$.
		\begin{enumerate}
			\item If $\scrG$ occurs in the modular reduction of a cuspidal sheaf $\scrF \in \Perv_G (\fr{g}, \K)$. Then $\scrG$ is cuspidal.
			\item If $\scrG$ occurs in the modular reduction of a character sheaf $\scrF \in \Perv_G (\fr{g}, \K)$. Then $\scrG$ is a character sheaf.
			\item If $\scrG$ occurs in the modular reduction of an orbital sheaf $\scrF \in \Perv_G (\fr{g}, \K)$. Then $\scrG$ is an orbital sheaf.
		\end{enumerate}
	\end{proposition}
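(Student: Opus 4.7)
The strategy is to translate each of (1), (2), (3) into a geometric condition (on support, or on Fourier-dual support) that is preserved under modular reduction. For a simple $\scrF \in \Perv_G(\fr{g}, \K)$, fix a torsion-free $\O$-form $\scrF_\O \in \Perv_G(\fr{g}, \O)$ with $\K(\scrF_\O) \cong \scrF$, so that $\F(\scrF_\O)$ is perverse and its simple subquotients are precisely the irreducibles appearing in $d([\scrF])$. The main compatibilities used throughout are that the Fourier--Deligne transform $\T$, the external product $\boxtimes$ (on torsion-free objects), and the standard six operations all commute (up to natural isomorphism) with extension/restriction of scalars, so in particular $d$ commutes with $\T$ at the level of Grothendieck groups.

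For (3), torsion-freeness of $\scrF_\O$ ensures $\supp \F(\scrF_\O) = \supp \scrF$. Since $\scrF$ is orbital this lies in the closure of finitely many $G$-orbits, and hence each simple subquotient $\scrG$ is orbital. For (2), by Corollary \ref{cs_are_fourier_of_orbital} the sheaf $\T\scrF$ is orbital. The compatibility $d \circ \T = \T \circ d$ then forces $\T\scrG$ to be a simple subquotient of $\F(\scrE_\O)$ for any torsion-free $\O$-form $\scrE_\O$ of $\T\scrF$; applying (3) yields $\T\scrG$ orbital, and Corollary \ref{cs_are_fourier_of_orbital} applied once more gives that $\scrG$ is a character sheaf.

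For (1), first suppose $\fr{g}$ is semisimple. By Lemma \ref{cusp_and_support_equiv} cuspidality of $\scrF$ is equivalent to $\supp \scrF, \supp \T\scrF \subseteq \scrN$. Applying the argument of (3) to torsion-free $\O$-forms of both $\scrF$ and $\T\scrF$, both $\supp \scrG$ and $\supp \T\scrG$ lie in $\scrN$, so $\scrG$ is cuspidal by Lemma \ref{cusp_and_support_equiv}. For general reductive $\fr{g}$, use the $\boxtimes$-factorization $\scrF \cong \scrL \boxtimes \scrB$ of Definition \ref{defn:cuspidal}, with $\scrL$ an additive local system on $Z(\fr{g})$ and $\scrB$ cuspidal on the semisimple Lie algebra $[\fr{g}, \fr{g}]$. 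Choose torsion-free $\O$-forms $\scrL_\O, \scrB_\O$ of each factor; then $\scrL_\O \boxtimes \scrB_\O$ is a torsion-free $\O$-form of $\scrF$, and Künneth yields $\F(\scrL_\O \boxtimes \scrB_\O) \cong \F(\scrL_\O) \boxtimes \F(\scrB_\O)$. Simple subquotients of such an external product are precisely external products of simple subquotients, so $\scrG \cong \scrL' \boxtimes \scrB'$. Part (2) applied to $Z(\fr{g})$ shows $\scrL'$ is a character sheaf on $Z(\fr{g})$, hence additive by Lemma \ref{abelian_cs:skyscaper_ls}; the semisimple case of (1) shows $\scrB'$ is cuspidal. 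Finally, the factorization computation at the end of the proof of Theorem \ref{reductive_cusp_are_cs} shows that a $\boxtimes$-factored perverse sheaf is res-small iff its semisimple factor is, so $\scrG$ satisfies Definition \ref{defn:cuspidal}.

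The main technical subtleties are twofold. First, the naive approach to (1) via the defining vanishing $\LRes_P^G \scrF = 0$ does not transfer directly, because it yields only that $\LRes_P^G \scrF_\O$ is $\O$-torsion, and the modular reduction of a torsion $\O$-complex can be nonzero; this is what forces the routing through the support characterization of Lemma \ref{cusp_and_support_equiv} and then through the $\boxtimes$-factorization. Second, the identity $d \circ \T = \T \circ d$ on $K_0$ requires justification since $\T\scrF_\O$ need not be torsion-free even when $\scrF_\O$ is; this is handled by the perversity of $\F(\T\scrF_\O) \cong \T(\F(\scrF_\O))$ together with the alternating-sum identity in $K_0$ of the derived category, and is a standard fact of modular Fourier theory.
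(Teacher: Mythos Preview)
Your proof is correct, and the logical flow is in fact the reverse of the paper's. The paper argues in the order $(1)\Rightarrow(2)\Rightarrow(3)$: it first invokes (the Lie-algebra analogue of) \cite[Proposition~2.22]{AHJR1} to show that res-smallness survives modular reduction, then handles the $\boxtimes$-factor directly to finish (1); for (2) it appeals to the identification of character sheaves with admissible sheaves (Theorem~\ref{admissible_and_cs_are_the_same}) together with the compatibility of $\Ind$ with modular reduction; and (3) is then deduced from (2) via Fourier. You instead prove $(3)\Rightarrow(2)\Rightarrow(1)$: (3) is immediate from the support equality $\supp\scrF_\O=\supp\scrF$ for torsion-free forms; (2) follows from (3) by Fourier and the identity $d\circ\T=\T\circ d$; and (1) is reduced, via Lemma~\ref{cusp_and_support_equiv}, to a support statement in the semisimple case and then extended to the reductive case by the $\boxtimes$-factorization argument of Theorem~\ref{reductive_cusp_are_cs}. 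Your route is more self-contained in that it avoids both the external citation for res-smallness and the heavy Theorem~\ref{admissible_and_cs_are_the_same}; the price is the extra bookkeeping you flag around $d\circ\T=\T\circ d$ (since $\T\scrF_\O$ need not be torsion-free) and the claim that simple subquotients of $\F(\scrL_\O)\boxtimes\F(\scrB_\O)$ are external products of simple subquotients, which in this situation holds because the central factor reduces to a rank-one additive local system with endomorphism ring $\F$. Your diagnosis of why the naive ``$\LRes_P^G\scrF=0$'' argument fails is also accurate: the torsion complex $\LRes_P^G\scrF_\O$ has vanishing class in $K_0$ but is nonzero as a complex, and $\scrG$ is only one composition factor of $\F(\scrF_\O)$, so no vanishing for $\LRes_P^G\scrG$ follows.
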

	\begin{proof}
		\textbf{(1): } Let $\scrG$ be a simple perverse sheaf in $\Perv_G (\fr{g}, \F)$ occurring in the modular reduction of a cuspidal sheaf $\scrF \in \Perv_G (\fr{g}, \K)$.
		In particular, let $\scrF_\O$ be a torsion-free sheaf in $\Perv_G (\fr{g}, \O)$ such that $\scrF \cong \K \otimes_\O \scrF_\O$. We have that $\scrG$ is a composition factor of $\F \otimes_\O^L \scrF_\O$.
		By performing the obvious modifications to Proposition 2.22 of \cite{AHJR1}, one obtains that if $\scrG$ is res-small. Let $\scrF \cong \scrL \boxtimes \scrB$ where $\scrL$ is a character sheaf on $Z(\fr{g})$.
		Since, $\K (-)$ gives a bijection at the level of objects, we must have that $\scrF_\O \cong \scrL_\O \boxtimes \scrB_\O$ for torsion-free sheaves $\scrL_\O \in \Perv_G (Z(\fr{g}), \O)$ and $\scrB_\O \in \Perv_G ([\fr{g}, \fr{g}], \O)$. 
		The Fourier transform commutes with extension of scalars, hence $(\T (\scrL_\O) \cong \T (\scrL) )_\O$. To check if $\scrL_\O$ is a character sheaf, one only needs to compute that $\T (\scrL_\O)$ is supported at a point. 
		Since $\supp ((\T (\scrL) )_\O) = \supp (\T (\scrL))$, we have shown that $\scrL_\O$ is a character sheaf on $Z (\fr{g})$.
		Since $\boxtimes$ commutes with $\F \otimes_\O^L (-)$, we get a $\boxtimes$-decomposition of $\F \otimes_\O^L \scrF_\O$,
		\[\F \otimes_\O^L \scrF_\O \cong (\F \otimes_\O^L \scrL_\O) \boxtimes (\F \otimes_\O \scrB_\O)\]
		Let $\scrL_\F = \F \otimes_\O^L \scrL_\O$. It remains to check that is a character sheaf, but this follows from the fact that extension of scalars commutes with pullbacks:
		\[+^* \scrL_\F \cong \F \otimes_\O^L +^* \scrL_\O \cong \F \otimes_\O^L \scrL_\O \cong \scrL_F\]  

		\textbf{(2): } By Remark 2.23 of \cite{AHJR1}, one has that induction commutes with modular reduction. The claim then follows from Theorem \ref{admissible_and_cs_are_the_same} and (1).

		\textbf{(3): } The Fourier transform is an equivalence of categories and commutes with scalars, and hence, commutes with modular reduction. The claim then follows by taking the Fourier transform of (2).
	\end{proof}

	\newpage
	\bibliography{mps_v2}{}

\begin{thebibliography}{AHJR17b}

\bibitem[Ach21]{A1}
Pramod~N. Achar.
\newblock {\em Perverse Sheaves and Applications to Representation Theory},
  volume 258 of {\em Mathematical Surveys and Monographs}.
\newblock American Mathematical Society, Providence, RI, 2021.

\bibitem[AHJR16]{AHJR1}
Pramod~N. Achar, Anthony Henderson, Daniel Juteau, and Simon Riche.
\newblock Modular generalized {S}pringer correspondence {I}: the general linear
  group.
\newblock {\em J. Eur. Math. Soc. (JEMS)}, 18(7):1405--1436, 2016.

\bibitem[AHJR17a]{AHJR2}
Pramod Achar, Anthony Henderson, Daniel Juteau, and Simon Riche.
\newblock Modular generalized {S}pringer correspondence {II}: classical groups.
\newblock {\em J. Eur. Math. Soc. (JEMS)}, 19(4):1013--1070, 2017.

\bibitem[AHJR17b]{AHJR3}
Pramod~N. Achar, Anthony Henderson, Daniel Juteau, and Simon Riche.
\newblock Modular generalized {S}pringer correspondence {III}: exceptional
  groups.
\newblock {\em Math. Ann.}, 369(1-2):247--300, 2017.

\bibitem[AHR15]{AHR}
Pramod~N. Achar, Anthony Henderson, and Simon Riche.
\newblock Geometric {S}atake, {S}pringer correspondence, and small
  representations {II}.
\newblock {\em Represent. Theory}, 19:94--166, 2015.

\bibitem[Bei16]{Bei}
A.~Beilinson.
\newblock Constructible sheaves are holonomic.
\newblock {\em Selecta Math. (N.S.)}, 22(4):1797--1819, 2016.

\bibitem[BL06]{BL06}
Joseph Bernstein and Valery Lunts.
\newblock {\em Equivariant Sheaves and Functors}.
\newblock Lecture Notes in Mathematics. Springer Berlin, Heidelberg, 2006.

\bibitem[Bry86]{Br86}
Jean-Luc Brylinski.
\newblock Transformations canoniques, dualit\'{e} projective, th\'{e}orie de
  {L}efschetz, transformations de {F}ourier et sommes trigonom\'{e}triques.
\newblock {\em Ast\'{e}risque}, (140-141):3--134, 251, 1986.

\bibitem[CM93]{CM}
David~H. Collingwood and William~M. McGovern.
\newblock {\em Nilpotent orbits in semisimple {L}ie algebras}.
\newblock Van Nostrand Reinhold Mathematics Series. Van Nostrand Reinhold Co.,
  New York, 1993.

\bibitem[DM20]{DM}
Fran\c{c}ois Digne and Jean Michel.
\newblock {\em Representations of finite groups of {L}ie type}, volume~95 of
  {\em London Mathematical Society Student Texts}.
\newblock Cambridge University Press, Cambridge, second edition, 2020.

\bibitem[Gin89]{G}
Victor Ginsburg.
\newblock Admissible modules on a symmetric space.
\newblock {\em Ast\'{e}risque}, (173-174):199--255, 1989.

\bibitem[Gun18]{G18}
Sam Gunningham.
\newblock Generalized {S}pringer theory for {$D$}-modules on a reductive {L}ie
  algebra.
\newblock {\em Selecta Math. (N.S.)}, 24(5):4223--4277, 2018.

\bibitem[Jan04]{Jan04}
Jens~Carsten Jantzen.
\newblock Nilpotent orbits in representation theory.
\newblock In {\em Lie theory}, volume 228 of {\em Progr. Math.}, pages 1--211.
  Birkh\"{a}user Boston, Boston, MA, 2004.

\bibitem[Jut09]{juteauThesis}
Danielle Juteau.
\newblock Correspondance de springer modulaire et matrices de
  d{\'e}composition.
\newblock {\em Thesis}, 2009.

\bibitem[KS90]{KS}
Masaki Kashiwara and Pierre Schapira.
\newblock {\em Sheaves on manifolds}, volume 292 of {\em Grundlehren der
  mathematischen Wissenschaften [Fundamental Principles of Mathematical
  Sciences]}.
\newblock Springer-Verlag, Berlin, 1990.
\newblock With a chapter in French by Christian Houzel.

\bibitem[Lau87]{L87}
G.~Laumon.
\newblock Transformation de {F}ourier, constantes d'\'{e}quations
  fonctionnelles et conjecture de {W}eil.
\newblock {\em Inst. Hautes \'{E}tudes Sci. Publ. Math.}, (65):131--210, 1987.

\bibitem[Let05]{Let05}
Emmanuel Letellier.
\newblock {\em Fourier transforms of invariant functions on finite reductive
  {L}ie algebras}, volume 1859 of {\em Lecture Notes in Mathematics}.
\newblock Springer-Verlag, Berlin, 2005.

\bibitem[Lus84]{ICC}
G.~Lusztig.
\newblock Intersection cohomology complexes on a reductive group.
\newblock {\em Invent. Math.}, 75(2):205--272, 1984.

\bibitem[Lus85a]{CS1}
George Lusztig.
\newblock Character sheaves. {I}.
\newblock {\em Adv. in Math.}, 56(3):193--237, 1985.

\bibitem[Lus85b]{CS2}
George Lusztig.
\newblock Character sheaves. {II}, {III}.
\newblock {\em Adv. in Math.}, 57(3):226--265, 266--315, 1985.

\bibitem[Lus86a]{CS4}
George Lusztig.
\newblock Character sheaves. {IV}.
\newblock {\em Adv. in Math.}, 59(1):1--63, 1986.

\bibitem[Lus86b]{CS5}
George Lusztig.
\newblock Character sheaves. {V}.
\newblock {\em Adv. in Math.}, 61(2):103--155, 1986.

\bibitem[Lus95]{Lus95}
George Lusztig.
\newblock Cuspidal local systems and graded {H}ecke algebras. {II}.
\newblock 16:217--275, 1995.
\newblock With errata for Part I [Inst. Hautes \'{E}tudes Sci. Publ. Math. No.
  67 (1988), 145--202; MR0972345 (90e:22029)].

\bibitem[Mir04]{M}
I.~Mirkovi\'{c}.
\newblock Character sheaves on reductive {L}ie algebras.
\newblock {\em Mosc. Math. J.}, 4(4):897--910, 981, 2004.

\bibitem[MS89]{MS}
J.~G.~M. Mars and T.~A. Springer.
\newblock Character sheaves.
\newblock {\em Ast\'{e}risque}, (173-174):111--198, 1989.

\bibitem[MV88]{MV}
I.~Mirkovi\'{c} and K.~Vilonen.
\newblock Characteristic varieties of character sheaves.
\newblock {\em Invent. Math.}, 93(2):405--418, 1988.

\bibitem[Psa22]{P}
Kostas~I. Psaromiligkos.
\newblock Character sheaves in characteristic $p$ have nilpotent singular
  support, 2022.
\newblock preprint arXiv:2211.11126.

\bibitem[Zho24a]{Zhou2}
Tong Zhou.
\newblock Character sheaves on reductive lie algebras in positive
  characteristic, 2024.
\newblock preprint arXiv:2404.19210.

\bibitem[Zho24b]{Zhou1}
Tong Zhou.
\newblock The fourier transform and characteristic cycles of monodromic
  $\ell$-adic sheaves, 2024.
\newblock preprint arXiv:2404.01621.

\end{thebibliography}
	\bibliographystyle{alpha}
\end{document}